\numberwithin{equation}{section}
\newtheorem{theorem}{Theorem}[section]
\newtheorem{lemma}[theorem]{Lemma}
\newtheorem{corollary}[theorem]{Corollary}
\newtheorem{conjecture}[theorem]{Conjecture}
\newtheorem{question}[theorem]{Question}
\theoremstyle{definition}
\newtheorem{definition}[theorem]{Definition}
\newtheorem{construction}[theorem]{Construction}
\newtheorem{remark}[theorem]{Remark}
\newtheorem{example}[theorem]{Example}
\DeclareMathOperator{\link}{link}
\DeclareMathOperator{\del}{del}
\DeclareMathOperator{\gr}{gr}
\DeclareMathOperator{\pol}{pol}
\newcommand{\ZZ}{{\mathbb Z}}
\newcommand{\NN}{{\mathbb N}}
\def\B{{\mathcal B}}
\def\mm{{\mathfrak m}}
\def\1{{\bf 1}}
\def\0{{\bf 0}}
\begin{document}

\title[A survey of the Herzog--Hibi--Ohsugi Conjecture]{Powers of
  componentwise linear ideals: The Herzog--Hibi--Ohsugi Conjecture and related problems}

\author{Huy T\`ai H\`a}
\address{Department of Mathematics,
Tulane University, 6823 St. Charles Ave., New Orleans, LA 70118}
\email{tha@tulane.edu}
\urladdr{http://www.math.tulane.edu/~tai}

\author{Adam Van Tuyl}
\address{Department of Mathematics and Statistics,
McMaster University, Hamilton, ON, L8S 4L8, Canada}
\email{vantuyl@math.mcmaster.ca}
\urladdr{https://ms.mcmaster.ca/~vantuyl/}

\keywords{componentwise linear ideals, linear quotients, powers of ideals,
symbolic powers, cover ideals, edge ideals, simplicial complexes}
\subjclass[2000]{Primary: 13D02 Secondary: 05E40, 13F20}
\date{\today}

\begin{abstract}
  In 1999 Herzog and Hibi introduced componentwise linear
  ideals.  A homogeneous ideal $I$ is componentwise linear if for
  all non-negative integers $d$, the ideal generated by the homogeneous
  elements of degree $d$ in $I$ has a linear resolution.  For
  square-free monomial ideals, componentwise linearity is related
  via Alexander duality to the property of being sequentially Cohen-Macaulay for  the corresponding simplicial complexes.   In general,
  the property of being componentwise linear is not preserved by
  taking powers.  In 2011, Herzog, Hibi, and Ohsugi conjectured
  that if $I$ is the cover ideal of a chordal graph, then $I^s$ is
  componentwise linear for all $s \geq 1$.  We survey some
  of the basic properties of componentwise linear ideals, and then
  specialize to the progress on
  the Herzog-Hibi-Ohsugi conjecture during the last decade.  We also
  survey the related problem of determining
  when the symbolic powers of a cover ideal are componentwise linear.
  \end{abstract}

\dedicatory{Dedicated to J\"urgen Herzog on the occasion
  of his $80^{th}$ birthday}

\maketitle


\section{Introduction}

Let $R$ be a polynomial ring over a field $k$ and let $I \subseteq R$ be
a homogeneous ideal. The ideal $I$ is often considered computationally
simple if it has a \emph{linear resolution}. In 1999, Herzog and Hibi
\cite{HH1999} defined the ``next best'' class of ideals, called the
componentwise linear ideals. Specifically, by letting $\langle I_d\rangle$
be the ideal generated by homogeneous polynomials of degree $d$ in $I$,
the ideal $I$ is called a \emph{componentwise linear} ideal
if $\langle I_d\rangle$ has a linear resolution for all $d \ge 0$.

Componentwise linear ideals and their properties have become
ubiquitous in commutative algebra, especially in combinatorial
commutative algebra.  Indeed, as shown in Herzog and Hibi's
original paper, the square-free monomial ideals with
the componentwise linear property provide an algebraic
characterization of sequentially Cohen-Macaulay simplicial complexes.
Componentwise linear ideals are also closely related to another class
of ideals that have been extensively examined. A finitely generated
$R$-module $M$ is called a \emph{Koszul} module if its
associated graded module $\gr_\mm(M) = \bigoplus_{i \ge 0} \mm^iM/\mm^{i+1}M$,
with respect to the maximal homogeneous ideal $\mm$ of $R$, has a linear
resolution over the associated graded ring
$\gr_\mm(R) = \bigoplus_{i \ge 0} \mm^i/\mm^{i+1}$. Koszul algebras
(first introduced by Priddy \cite{Pri1970}) and Koszul modules (first
considered by \c{S}ega \cite{Seg2001} and by Herzog and Iyengar \cite{HI2005})
have enjoyed much attention from various areas of mathematics.
Over a polynomial ring, a homogeneous ideal $I$ is Koszul if and only
if it is componentwise linear (see \cite[Theorem 3.2.8]{R2001} and
\cite[Proposition 4.9]{Yan2000}), thus cementing the importance of
componentwise linearity.

In the two decades since their introduction, one research theme in commutative
algebra is to find new classes of componentwise linear ideals.  At the
same time, a broader research theme is to understand
how properties of an ideal are preserved when taking
powers. It is natural to ask the following question:
if $I$ is an ideal that is componentwise linear,
does the ideal $I^s$ also have this property?   The answer
to this question will be, in general, no.  In fact,
there are examples of ideals such that $I$ has a linear resolution (and
so is componentwise linear), but $I^2$ does not have a linear resolution.
For a specific example, see Example \ref{sturmfelsexample}.

One is then led to ask what extra hypotheses are required on $I$
to ensure that $I^s$ is also componentwise linear.  The first investigation
into this question was carried out by Herzog, Hibi, and Ohsugi \cite{HHO2011}.
While we will define necessary terminology in later sections, for now, it is enough to know that the ideal
$I$ in the statement below is constructed from the properties
of a finite simple graph. Based upon their results and experiments,
Herzog, Hibi and
Ohsugi posited the following conjecture:

\begin{conjecture}[Herzog-Hibi-Ohsugi] \label{conj.HHO}
  Let $I$ be the cover ideal of a chordal graph. Then $I^s$ is componentwise linear for all $s \ge 1$.
\end{conjecture}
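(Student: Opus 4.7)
The plan is to prove that for every $s \ge 1$ and every $d \ge 0$, the ideal generated by the degree-$d$ piece of $J(G)^s$ has \emph{linear quotients}, hence a linear resolution. Since a monomial ideal $I$ is componentwise linear if and only if each $\langle I_d\rangle$ has a linear resolution, this will suffice. The argument would proceed by double induction on $|V(G)|$ and on $s$, using the existence of a simplicial vertex $v$ in any chordal graph (Dirac's theorem on perfect elimination orderings).

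The base case $s=1$ is the theorem of Francisco and Van Tuyl: $J(G)$ is componentwise linear for chordal $G$. For the inductive step, fix a simplicial vertex $v$, so $N[v]$ is a clique, and split the minimal vertex covers of $G$ into those containing $v$ and those containing all of $N(v)$ instead. This yields a decomposition
\[
J(G) \ = \ x_v \cdot A \ + \ \Bigl(\prod_{u \in N(v)} x_u\Bigr) \cdot B,
\]
where $A$ and $B$ are, up to polarization, cover ideals of the smaller chordal graphs $G\setminus v$ and $G\setminus N[v]$. Raising to the $s$th power and expanding via the binomial theorem, each summand becomes a monomial times a product of the form $A^i\,B^j$ with $i+j\le s$. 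By the induction hypothesis on $|V(G)|$, each $A^i$ and $B^j$ is componentwise linear, and one may extract a linear-quotient order on every graded component.

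The crux of the argument is then to \emph{merge} the linear-quotient orders of the summands into a single linear-quotient order on the generators of $\langle (J(G)^s)_d\rangle$. A natural attempt is to order generators lexicographically first by the $x_v$-exponent (decreasing), then by the inherited orders from $A^i$ and $B^j$. Verifying that every colon ideal is generated by variables then splits into two cases: when two generators share the same $x_v$-exponent, the inductive order controls the colon via the inherited linear quotients; when they differ, one invokes a \emph{simplicial exchange}, using the clique structure of $N[v]$ to swap a factor $x_v$ in the later generator for some $x_u$ with $u \in N(v)$, producing an earlier generator whose colon into the current one is a single variable.

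The main obstacle, and the reason the conjecture has resisted proof for more than a decade, is precisely this merging step for $s \ge 2$. A single monomial generator of $J(G)^s$ admits many distinct factorizations as a product of $s$ minimal vertex covers, and the simplicial exchange at $v$ must cohere with exchanges demanded at the other maximal cliques of the clique tree of $G$. For forests, such exchanges can be made locally and the induction closes; but for a general chordal $G$ the clique tree can be large, and a swap along one branch can destroy covers on another. I expect a successful argument will require a global combinatorial invariant on factorizations (perhaps a weight obtained by rooting the clique tree and performing a depth-first traversal, in the spirit of shifting) together with a polarization step reducing the non-squarefree ideal $J(G)^s$ to an edge-ideal statement where Fr\"oberg-type tools apply. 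It is at this global coherence step that genuinely new ideas appear to be necessary.
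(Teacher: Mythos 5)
This statement is Conjecture \ref{conj.HHO}, which is \emph{open}: the paper contains no proof of it, only a survey of the special cases in which it has been verified (cover ideals with a linear resolution, star and generalized star graphs, $(C_4,2K_2)$-free chordal graphs, paths, biclique graphs, certain Cameron--Walker graphs, and trees). So there is nothing in the paper to compare your argument against, and what you have written is not a proof but a strategy sketch that you yourself concede does not close.

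The genuine gap is exactly where you locate it, and it is worth being precise about why it is fatal as written. First, the decomposition $J(G)=x_v A+\bigl(\prod_{u\in N(v)}x_u\bigr)B$ is fine for $s=1$ (it is essentially the setting of Theorem \ref{buidcwl}), but after raising to the $s$th power the summands $x_v^i m^{s-i}A^iB^{s-i}$ overlap: a single minimal generator of $J(G)^s$ typically arises from several distinct factorizations into $s$ vertex covers, so the generating set of $\langle (J(G)^s)_d\rangle$ is not a disjoint union of the generating sets of the summands, and ``inheriting'' orders from the pieces is not well defined. Second, componentwise linearity of $A^i$ and $B^j$ gives no control over the product $A^iB^j$ or the sum of the pieces --- Example \ref{sturmfelsexample} shows products of ideals with linear resolutions can fail to have linear resolutions, so some structural input beyond the induction hypothesis is mandatory at the merging step. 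Third, the claimed ``simplicial exchange'' (swapping $x_v$ for some $x_u$, $u\in N(v)$) must produce an \emph{earlier} generator in a globally consistent order, and you give no candidate order and no verification that the colon ideals are generated by variables; this is precisely the content that Erey, Mohammadi, Kumar--Kumar, and Herzog--Hibi--Moradi each had to supply by hand (via explicit linear-quotient orders, weak polymatroidality, polarization to $J(G_s)$, or a quadratic Gr\"obner basis for the Rees ideal) in the special cases where the conjecture is known. Until the merging step is carried out, the proposal establishes nothing beyond the base case $s=1$, which is Theorem \ref{chordaliscwl}.
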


One of the main goals
of this paper is to provide an up-to-date survey on what
is known about Conjecture \ref{conj.HHO}, and what is known about
the more general
question of powers of ideals that are componentwise linear.
Besides describing which families of chordal graphs satisfy the above
conjecture, we will also sketch out the broad strategies that have
been used to verify the conjecture.

We will also survey a variation of Conjecture \ref{conj.HHO},
which was initiated by Seyed Fakhari \cite{SF2018}.  In this
variation, it is asked whether or not $I^{(s)}$,
the $s$-th symbolic power of the cover ideal, is componentwise linear.
The two problems dovetail when the regular powers of a cover ideal
are the same as its symbolic powers.  This approach
is typified by Kumar and Kumar's recent proof that
Conjecture \ref{conj.HHO} holds for all trees (see \cite{KK2020});  in the case
of trees, regular powers and symbolic powers of cover ideals
agree, which allow Kumar and Kumar to exploit the properties
of symbolic powers of ideals.

The majority of the results in this paper have appeared
in the literature.   We have, however, included one new result
about edge ideals.  In particular we show that
the edge ideals of complete $m$-partite graphs have the
property that all of their symbolic powers
are componentwise linear (see Corollary \ref{newresult2}).  To encourage
further work on Conjecture \ref{conj.HHO}, we have included some
research questions in the final section.

We end this introduction with some final comments on our intended audience.
When writing this survey, we assumed that our readers are familiar
with minimal graded free resolutions, and possibly the
Stanley-Reisner correspondence between simplicial complexes and
square-free monomial ideals.  Our goal in Sections
2 and 3 is to provide a quick summary of some of the basic facts
surrounding componentwise linear ideals, plus
pointers to the literature for proofs of these facts.  We expect
graduate students and researchers new to componentwise
linear ideals will appreciate this approach.  In the second half of the paper,
we have been attempted to be more encyclopedic in our approach
to the Herzog-Hibi-Ohsugi Conjecture, and the related questions concerning
the symbolic powers.  We hope experts will appreciate this
snapshot of the current state-of-the-art regarding these problems.

\noindent
    {\bf Acknowledgements.}
    The authors would like to thank Aldo Conca, Nursel Erey, Ajay Kumar,
    Hop D. Nguyen,
      and Seyed Amin Seyed Fakhari for their suggestions and feedback.
    H\`a acknowledges supports from the Louisiana Board of Regents and
    the Simons Foundation.
    Van Tuyl's research is supported by NSERC Discovery Grant 2019-05412.


 \section{Basics of Componentwise Linear Ideals}

 In this section, we introduce componentwise linear ideals, and describe
 some of their basic properties.  We emphasize techniques to determine
 if an ideal is componentwise linear (a theme that will be stressed
 throughout this survey). Let $k$ be a field and let $R = k[x_1, \dots, x_n]$ denote a polynomial ring over $k$.

 We begin by recalling the notion of a minimal graded free resolution.
 Given a homogeneous ideal $I$ of $R$ (an ideal
 generated by homogeneous elements of $R$), we can associate to $I$
 a {\it minimal graded free resolution}, that is, a long exact sequence
 of length $p \leq n$ of graded $R$-modules of the form
 $$0 \rightarrow \bigoplus_{j \in \mathbb{N}}R(-j)^{\beta_{p,j}(I)}
 \rightarrow \bigoplus_{j \in \mathbb{N}}R(-j)^{\beta_{p-1,j}(I)}
 \rightarrow \cdots \rightarrow
 \bigoplus_{j \in \mathbb{N}}R(-j)^{\beta_{0,j}(I)} \rightarrow
 I \rightarrow 0.$$
 Here, $R(-j)$ denotes the twisted graded $R$-module formed by setting
 $R(-j)_d = R_{d-j}$.  The invariants $\beta_{i,j}(I)$ are
 the {\it $(i,j)$-th graded Betti numbers of $I$}, and they
 count the number of degree $j$ generators of the $i$-th
 syzygy module of $I$.

 We write $I = \langle f_1,\ldots,f_t\rangle$ if $I$ is
 generated by $\{f_1,\ldots,f_t\}$.  A homogeneous
 ideal $I = \langle f_1,\ldots,f_t \rangle$ has a
 {\it linear resolution} if
 $\deg f_1 = \cdots = \deg f_t =d$ for some integer
 $d \geq 0$ and, for all $i \geq 1$,
 $$\beta_{i,i+j}(I) = 0 ~~\mbox{for all $j \neq d$.}$$
 This is equivalent to the fact that the minimal graded free
 resolution of $I$ has the form
 $$0 \rightarrow R(-d-p)^{\beta_{p,p+d}(I)} \rightarrow \cdots \rightarrow
 R(-d-1)^{\beta_{1,1+d}(I)} \rightarrow
 R(-d)^{\beta_{0,d}(I)} \rightarrow
 I \rightarrow 0.$$

 \begin{example}\label{running}
 Let $I = \langle x_1x_2, x_2x_3,x_3x_4 \rangle$
 in $R = k[x_1,x_2,x_3,x_4]$.  Then the minimal graded free resolution
 of $I$ has the form
 $$0 \rightarrow R(-3)^2 \rightarrow R(-2)^3 \rightarrow
 I \rightarrow 0.$$
 So, the ideal $I$ has a linear resolution.

 On the other hand, consider the ideal $J = \langle x_1x_3,
 x_2x_3,x_1x_2x_4x_5 \rangle$ in $R=k[x_1,\ldots,x_5]$.
 Note that $J$ is not generated by forms of the same degree,
 so it cannot have a linear resolution.   In particular,
 the graded free resolution of $J$ has the form
 $$
 0 \rightarrow R(-3)^1 \oplus R(-5)^1 \rightarrow
 R(-2)^2 \oplus R(-4)^1 \rightarrow
 J \rightarrow 0.$$
 We will return to these ideals below.
 \end{example}

 We are now in a position to define the main objects of
 this survey.  For any homogeneous ideal $I \subseteq R$
 and integer $d \geq 0$, let
 $$\langle I_d \rangle = \langle \{ F\in I ~|~ \mbox{$F$ is homogeneous
 of degree $d$}\} \rangle$$
 denote the ideal generated by all the homogeneous elements
 of degree $d$ in $I$.  The following definition is then due
 to Herzog and Hibi \cite{HH1999}:

 \begin{definition}[Herzog--Hibi]
   A homogeneous ideal $I$ of $R$ is {\it componentwise linear}
   if for all integers $d \geq 0$, the ideal $\langle I_d \rangle$
   has a linear resolution.
 \end{definition}

 Based only upon the above definition,
 verifying whether or not a homogeneous
 ideal $I$ is componentwise linear would involve checking
 an infinite number of conditions.  Fortunately, it
 is possible to verify if $I$ is componentwise linear by only checking
 a finite number of $d \in \mathbb{N}$ by using the
 Castelnuovo-Mumford regularity of $I$.

 The {\it (Castelnuovo-Mumford) regularity of $I$} is defined to be
 $${\rm reg}(I) = \max\{j-i ~|~ \beta_{i,j}(I) \neq 0 \}.$$
 Roughly speaking, ${\rm reg}(I)$ measures the largest degree
 of a generator of a syzygy of $I$.  This invariant can
 also be viewed as a measure of the complexity of $I$.  We then
 require the following well-known fact (see, for example,
 \cite[Proposition 1.1]{EG}).

 \begin{theorem}\label{regularitybound}
   Let $I$ be a homogeneous ideal with $r = {\rm reg}(I)$.
   Then for all $d \geq r$, the ideal $\langle I_d \rangle$ has
   a linear resolution.
 \end{theorem}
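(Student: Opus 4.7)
The plan is to reduce the claim to the well-known behavior of Castelnuovo--Mumford regularity in short exact sequences. Write $J = \langle I_d \rangle$ and assume without loss of generality that $J \neq 0$ (otherwise there is nothing to prove). By construction $J$ is generated in a single degree $d$, so minimality of the free resolution forces $\beta_{i,i+j}(J) = 0$ for every $j < d$. Thus ``$J$ has a linear resolution'' is equivalent to the single inequality $\reg(J) \leq d$, and that is what I aim to establish.

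To get this upper bound I would exploit the tautological short exact sequence of graded $R$-modules
\[
0 \longrightarrow J \longrightarrow I \longrightarrow I/J \longrightarrow 0.
\]
Here $J_e = I_e$ for every $e \geq d$, so $I/J$ is concentrated in degrees strictly less than $d$; in particular $I/J$ has finite length, and its regularity is bounded by the top degree of a nonzero component, so $\reg(I/J) \leq d-1$. Combined with the hypothesis $\reg(I) = r \leq d$, this is exactly the input needed.

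Then I would invoke the standard inequality from the long exact sequence in local cohomology (or equivalently from the mapping-cone estimate on Tor),
\[
\reg(J) \;\leq\; \max\bigl\{\reg(I),\ \reg(I/J) + 1\bigr\} \;\leq\; \max\{d,\ d\} \;=\; d.
\]
Combined with the lower bound $\reg(J) \geq d$ that follows from $J$ being generated in degree $d$, we conclude $\reg(J) = d$, which by the discussion in the first paragraph gives that $\langle I_d\rangle$ has a linear resolution.

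There is no real obstacle here: the whole argument rests on two routine inputs, namely the regularity bound for a middle term in a short exact sequence and the trivial fact that a finite-length module has regularity at most its top nonzero degree. The only care needed is to pick the correct one of the two standard inequalities (for the kernel vs.\ the cokernel) and to verify that $I/J$ really is concentrated below degree $d$, which is immediate from the definition of $\langle I_d\rangle$.
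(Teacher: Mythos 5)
Your argument is correct. The paper itself does not prove this statement --- it is quoted as a known fact with a pointer to \cite[Proposition 1.1]{EG} --- so there is no internal proof to compare against; what you have written is a legitimate self-contained derivation via the standard estimate $\reg(J) \le \max\{\reg(I),\, \reg(I/J)+1\}$ for the sub-object in a short exact sequence, combined with the observation that a module generated in the single degree $d$ and of regularity $d$ has a linear resolution. The one place where your wording undersells what is actually being used: the identity $\langle I_d\rangle_e = I_e$ for all $e \ge d$ is \emph{not} ``immediate from the definition of $\langle I_d\rangle$''; it holds precisely because $I$ has no minimal generators in degrees $>d$, which in turn follows from the hypothesis via $\beta_{0,j}(I)\neq 0 \Rightarrow j \le \reg(I) = r \le d$. (For $d$ below the top generating degree of $I$, the quotient $I/\langle I_d\rangle$ is certainly not concentrated below degree $d$, and the theorem itself fails, so this is where the hypothesis $d \ge r$ genuinely enters --- not only in the bound $\reg(I) \le d$.) With that point made explicit, the chain $\reg(I/J)\le d-1$ (finite length), $\reg(J)\le\max\{r,\,d\}=d$, and $\beta_{i,i+j}(J)=0$ for $j<d$ by minimality of the resolution of a module generated in degree $d$ closes the argument cleanly.
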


 Consequently, determining whether $I$ is componentwise
 linear can be reduced to checking a finite number of cases.

 \begin{corollary}\label{regcwl}
   Let $I$ be a homogeneous ideal with
   $r = {\rm reg}(I)$. Then $I$ is componentwise
   linear if and only if $\langle I_d \rangle$ has a linear
   resolution for all $0 \leq d
   \leq r$. In particular, if $I$ has a linear resolution,
   then $I$ is also componentwise linear.
 \end{corollary}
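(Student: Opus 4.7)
The plan is to split the statement into its three pieces and handle each with a short argument, leaning almost entirely on Theorem \ref{regularitybound}.

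First I would dispose of the forward implication. If $I$ is componentwise linear, then by definition $\langle I_d\rangle$ has a linear resolution for every $d \ge 0$, so in particular for every $d$ in the finite range $0 \le d \le r$. This direction requires no further ingredients.

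For the reverse implication, the point is to show that the hypothesis on $d \le r$ already forces the linear resolution property for \emph{every} $d \ge 0$. For $d \le r$ it is given, so I only need to treat $d > r$, and this is exactly the content of Theorem \ref{regularitybound}: once $d$ exceeds the regularity of $I$, the ideal $\langle I_d \rangle$ automatically has a linear resolution. Combining the two ranges gives a linear resolution for $\langle I_d \rangle$ at every $d$, which is the definition of componentwise linear.

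Finally, for the "in particular" clause, suppose $I$ has a linear resolution, with all minimal generators in some degree $d_0$. Then $\reg(I) = d_0$, so $r = d_0$. In the range $0 \le d \le r$ we have three cases: for $d < d_0$ the ideal $\langle I_d \rangle$ is the zero ideal (there are no elements of $I$ of degree below $d_0$), which trivially has a linear resolution; for $d = d_0$ we have $\langle I_{d_0}\rangle = I$, which has a linear resolution by hypothesis. Invoking the equivalence just proved, $I$ is componentwise linear. No obstacle is anticipated here; the whole corollary is really a direct packaging of Theorem \ref{regularitybound} together with the definition.
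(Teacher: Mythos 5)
Your argument is correct and matches the paper's proof: both directions of the equivalence are reduced to Theorem \ref{regularitybound} plus the definition, and the ``in particular'' clause is handled exactly as in the paper, by noting that a linear resolution forces all generators into degree $r = \reg(I)$, so $\langle I_d\rangle = \langle 0\rangle$ for $d < r$ and $\langle I_r\rangle = I$. No issues.
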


 \begin{proof}
   The if-and-only if statement follows directly from Theorem
   \ref{regularitybound} and the definition.
   If $I$ has a linear resolution, then all the
   homogeneous generators of $I$ have the same degree, which coincides with its regularity $r$.
   Thus $\langle I_d \rangle = \langle 0 \rangle$ for all
   $0 \leq d < r$, and $\langle I_r \rangle = I$.  The conclusion now follows from the first
   part of the statement.
 \end{proof}

 \begin{example} Let
   $J = \langle x_1x_3, x_2x_3,x_1x_2x_4x_5 \rangle$ in $R=k[x_1,\ldots,x_5]$
   be the ideal of Example \ref{running}.  From the minimal
   graded free resolution of $J$ given in Example \ref{running}, we have
   $${\rm reg}(J) = \max\{3-1,5-1,2-0,4-0\} = 4.$$
   So, by Corollary \ref{regcwl}, we need to compute the
   minimal graded free resolutions of $\langle J_d \rangle$ for $d=0,\ldots,4$.
   Since $\langle J_d \rangle = \langle 0 \rangle$ for $d=0,1$, we
   focus on the remaining cases.  For the ideal
   $\langle J_2 \rangle = \langle x_1x_3,x_2x_3 \rangle$ we have
   $$ 0 \rightarrow R(-3) \rightarrow R(-2)^2 \rightarrow \langle J_2 \rangle
   \rightarrow 0;$$
   for the ideal
   $$\langle J_3 \rangle = \langle x_1^2x_3,x_1x_2x_3,x_1x_3x_4,x_1x_3x_5,
   x_2^2x_3,x_2x_3^2,x_2x_3x_4,x_2x_3x_5 \rangle$$
   the resolution is
   $$0 \rightarrow R(-7)^2 \rightarrow R(-6)^{10} \rightarrow R(-5)^{20}
   \rightarrow R(-4)^{20} \rightarrow R(-3)^9 \rightarrow \langle J_3 \rangle
   \rightarrow 0; $$
   and for the ideal $\langle J_4 \rangle$ (we suppress the 26 minimal generators)
   the resolution has the form
   $$0 \rightarrow R(-8)^9 \rightarrow R(-7)^{43} \rightarrow R(-6)^{80}
   \rightarrow R(-5)^{71} \rightarrow R(-4)^{26} \rightarrow \langle J_4 \rangle
   \rightarrow 0.$$
   Thus, the ideal $J$ is componentwise linear.
 \end{example}

 We give some alternative ways to characterize when an ideal
 is componentwise linear.  One such characterization is
 in terms of the
 generic initial ideal.  We review the relevant terminology; for more
 on generic initial ideals see \cite{G1998}.

 Let $GL_n(k)$ denote the general linear group of order $n$
 over $k$, i.e.,  all the $n\times n$ invertible matrices with
 entries in $k$.  Any matrix
 $g \in GL_n(k)$ acts on the variables $x = \{x_1, \ldots, x_n\}$
 by a linear change of variables, i.e., $x_i$ is sent to
 $g_{i,1}x_1+g_{i,2}x_2 + \cdots + g_{i,n}x_n$ for $i=1,\ldots,n$,
 where $(g_{i,1},\ldots,g_{i,n})$ is the $i$-th row of $g$.
Given $g \in GL_n(k)$ and a polynomial $f = f(x_1,\ldots,x_n) \in R$,
then $g$ acts on $f$ by
$g\cdot f := f(g \cdot x).$
Now fix an ideal $I$ of $R$ and monomial order $>$.
Every matrix $g \in GL_n(k)$ results in an initial ideal
$\operatorname{in}_>(g\cdot I)$ where
$g\cdot I = \langle g\cdot f ~|~ f \in I \rangle.$
We say that two
matrices $g$ and $g'$ are \emph{equivalent} if
$$\operatorname{in}_>(g\cdot I) = \operatorname{in}_>(g'\cdot I).$$
This definition
induces an equivalence relation on $GL_n(k)$, and thus,
the equivalence classes partition the
group $GL_n(k)$.

One of these partitions is quite ``large'' in
the following sense:

\begin{lemma}[{\cite[Theorem 4.1.2]{HH2011}}]\label{Zariski}
For a fixed $I$ and term order $>$, one of the equivalence
classes is a nonempty Zariski open subset $U$  inside $GL_n(k)$.
\end{lemma}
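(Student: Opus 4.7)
The plan is to reduce the problem to finitely many degrees, describe each equivalence class in a given degree as a locally closed subset of $GL_n(k)$, and then use irreducibility of $GL_n(k)$ to isolate a generic class.

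First, since $g \in GL_n(k)$ acts by a linear change of variables, $\dim_k (g \cdot I)_d = \dim_k I_d$ for every $d$, so the initial ideal $\operatorname{in}_>(g \cdot I)$ has the same Hilbert function as $I$ for every $g$. By a standard finiteness fact (e.g.\ Macaulay's bound), any monomial ideal with a prescribed Hilbert function is determined by its graded components up to some uniform degree $D$. Hence the equivalence class of $g$ is specified by the finite tuple $\bigl(\operatorname{in}_>((g \cdot I)_d)\bigr)_{d \leq D}$, each entry being one of finitely many monomial subspaces of $R_d$ of dimension $\dim_k I_d$. Therefore the relation has only finitely many classes.

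Next, fix a degree $d \leq D$ and a basis $f_1, \ldots, f_s$ of $I_d$ with $s = \dim_k I_d$. Let $M_d(g)$ be the $s \times N_d$ matrix whose rows record the coefficients of $g \cdot f_1, \ldots, g \cdot f_s$ in the monomial basis of $R_d$ listed in decreasing $>$-order, where $N_d = \dim_k R_d$. The entries of $M_d(g)$ are polynomials in the entries of $g$, and the initial monomial subspace $\operatorname{in}_>((g \cdot I)_d)$ is spanned by the pivot columns of $M_d(g)$, a pattern detected by the (non)vanishing of the $s \times s$ minors of $M_d(g)$. Thus for each monomial subspace $W \subseteq R_d$ of dimension $s$, the set $V_d(W) = \{g \in GL_n(k) : \operatorname{in}_>((g \cdot I)_d) = W\}$ is locally closed.

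Since $GL_n(k)$ is irreducible and is partitioned into finitely many locally closed sets $V_d(W)$, one of them --- say $V_d(W_d)$ --- contains a nonempty Zariski open subset $U_d$, cut out by the nonvanishing of the single polynomial corresponding to the lexicographically largest nonvanishing minor of $M_d(g)$. Setting $U := \bigcap_{d \leq D} U_d$, irreducibility of $GL_n(k)$ ensures this finite intersection of nonempty opens is nonempty. Every $g \in U$ then yields the same $\operatorname{in}_>((g \cdot I)_d)$ for all $d \leq D$, hence the same initial ideal $\operatorname{in}_>(g \cdot I)$ by the first paragraph, giving the desired Zariski open equivalence class. The main technical obstacle is securing the uniform degree bound $D$; without it one cannot a priori reduce to finitely many classes. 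Once that is in hand, the remainder is elementary linear algebra together with basic Zariski topology on the irreducible variety $GL_n(k)$.
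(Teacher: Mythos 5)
The paper itself gives no proof of this lemma; it is quoted directly from \cite[Theorem 4.1.2]{HH2011}, and your argument is, in outline, the standard Galligo/Bayer--Stillman proof given there: record the coefficients of a basis of $(g\cdot I)_d$ in a matrix $M_d(g)$ whose entries are polynomial in $g$, observe that the degree-$d$ initial subspace is determined by which maximal minors vanish (so each class in a fixed degree is locally closed), and use irreducibility of $GL_n(k)$ to single out a dense piece in each degree. That part of your write-up is correct, including the observation that the generic piece $V_d(W_d)$ is the one containing the principal open set where the first not-identically-zero minor is nonvanishing.

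The weak point is the step you yourself flag: the uniform bound $D$. The assertion that ``any monomial ideal with a prescribed Hilbert function is determined by its graded components up to some uniform degree $D$'' is true, but it does not follow from Macaulay's bound alone; as stated it needs a genuinely nontrivial input such as the Bigatti--Hulett--Pardue theorem (the lex ideal has the most generators in each degree, so its top generating degree bounds that of every monomial ideal with the same Hilbert function), Gotzmann-type regularity bounds, or Maclagan's theorem that monomial ideals contain no infinite antichains. The standard proof avoids this machinery entirely: letting $W_d$ be the generic degree-$d$ initial subspace, one checks that $R_1 W_d \subseteq W_{d+1}$ (compare $g$ lying in $V_d(W_d)\cap V_{d+1}(W_{d+1})$), so $J=\bigoplus_d W_d$ is an ideal; Noetherianity gives a $D$ with $J$ generated in degrees $\le D$, and then for $g\in\bigcap_{d\le D}V_d(W_d)$ the inclusion $J_d\subseteq \operatorname{in}_>(g\cdot I)_d$ together with equality of Hilbert functions forces equality in every degree. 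I would replace your appeal to ``Macaulay's bound'' with this chain argument (or explicitly cite one of the theorems above). One further small point: as written you produce a nonempty open set $U$ \emph{contained in} a single equivalence class, whereas the lemma asserts the class \emph{is} open; to get the stronger statement take $U_d$ to be all of $V_d(W_d)$ (open, being the dense member of a finite locally closed partition of the irreducible variety $GL_n(k)$) and note that the class equals $\bigcap_{d\le D}V_d(W_d)$.
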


Although we do not go into the details here, the Zariski open set inside of
$GL_n(k)$ refers to the zero set of some ideal in $k[y_{i,j}~|~
1 \leq i,j\leq n]$.   Note that for all $g \in U$, the Zariski open subset in Lemma \ref{Zariski},
the initial ideal $\operatorname{in}_>(g\cdot I)$ is the same ideal.

\begin{definition}
Fix a term order $>$ on $R$, let $I$ be an ideal of $R$, and let
$g$ be any element of the open Zariski subset of Lemma \ref{Zariski}.
The initial ideal $\operatorname{in}_>(g\cdot I)$ is called the
\emph{generic initial ideal} of $I$ for the
term order $>$.  It is
denoted $\operatorname{gin}_>(I) = \operatorname{in}_>(g\cdot I)$.
\end{definition}

Roughly speaking, the generic initial ideal is
the ideal we should expect if we pick a ``random'' matrix
$g \in GL_n(k)$ and form $\operatorname{in}_>(g\cdot I).$

With this terminology, we then have the following equivalent
statements. In particular, an ideal $I$ is componentwise linear
if $I$ and the generic initial ideal $I$ with respect to
the reverse lexicographical order have the same number of generators in each degree.  We
want to highlight that this statement also requires the
hypothesis that the field $k$ has characteristic zero.

\begin{theorem}\label{equivalences}
  Let $I$ be a homogeneous ideal of $R$.
  Assume ${\rm char}(k) = 0$.  Then the
  following are equivalent:
  \begin{enumerate}
  \item $I$ is componentwise linear
  \item $\beta_{i,j}(I) = \beta_{i,j}({\rm gin}_>(I))$ for all $i,j\geq 0$
    where $>$ is the reverse lexicographical order.
  \item $\beta_{0,j}(I) = \beta_{0,j}({\rm gin}_>(I))$ for all $j \geq 0$
    where $>$ is the reverse lexicographical order.
  \end{enumerate}
    \end{theorem}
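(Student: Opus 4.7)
The plan is to follow the classical Aramova--Herzog--Hibi argument. The ingredients I would assemble first are: (i) in characteristic zero, $\gin_>(I)$ with respect to revlex is Borel-fixed and hence strongly stable; (ii) by the Eliahou--Kervaire formula, any strongly stable ideal generated in a single degree has a linear resolution, so every strongly stable ideal is componentwise linear; (iii) taking gin with respect to revlex commutes with the truncation operator, in the sense that $\gin_>(\langle I_d\rangle) = \langle \gin_>(I)_d\rangle$; (iv) for any ideal and any term order, the pointwise bound $\beta_{i,j}(I) \leq \beta_{i,j}(\gin_>(I))$ holds; and (v) the alternating sum $\sum_{i,j}(-1)^i\beta_{i,j}$, being the numerator of the Hilbert series, is preserved by gin.

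The implication (2) $\Rightarrow$ (3) is trivial. For (1) $\Rightarrow$ (2), assume $I$ is componentwise linear. By (iii), the ideal $\langle \gin_>(I)_d\rangle$ is strongly stable and generated in one degree, hence has a linear resolution by (ii); the hypothesis gives the same for $\langle I_d\rangle$. Since these two ideals also share a Hilbert function (gin preserves Hilbert functions, and truncation commutes with gin by (iii)), their graded Betti numbers agree, because the Betti numbers of an ideal with a linear resolution are determined by its Hilbert function. Using the standard relations that express the Betti numbers of $I$ in terms of those of its truncations $\langle I_d\rangle$, one then descends to equality of the full Betti tables of $I$ and $\gin_>(I)$.

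For (3) $\Rightarrow$ (1), the argument proceeds in two steps. First I would combine the assumed equality of the $0$-th row of Betti numbers with the pointwise bound (iv) and the Hilbert-series identity (v) to force equality throughout the Betti table; this cascade is the technical crux, resting on a rigidity property specific to revlex, namely that revlex gin cannot create extra higher syzygies without already producing extra generators. Second, once (2) is established, componentwise linearity of $I$ transfers from that of $\gin_>(I)$ (which holds by (i) and (ii)), because equality of Betti tables strand by strand forces each $\langle I_d\rangle$ to inherit the linear resolution of $\langle \gin_>(I)_d\rangle$.

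The main obstacle I foresee is the cascade from the $0$-th row to the entire Betti table in the proof of (3) $\Rightarrow$ (2). This is precisely where characteristic zero and the choice of reverse lexicographic order enter essentially, and where one must exploit the fine interplay between the generic change of coordinates and the truncation filtration on $I$; the other implications are comparatively routine once the ingredients (i)--(v) are in hand.
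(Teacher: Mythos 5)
First, note that the paper does not actually reprove this theorem: its ``proof'' is a pointer to \cite[Theorem 1.1]{AHH2000} for the equivalence of (1) and (2), and to \cite[Theorem 1.2]{C2004} for the equivalence of (1) and (3). Your proposal attempts to reconstruct those arguments, and ingredients (i), (ii), (iv), (v) are correct and standard. However, ingredient (iii) is false as a general statement. For $I = \langle x^2, y^2\rangle \subseteq k[x,y]$ one computes $\gin_>(I) = \langle x^2, xy, y^3\rangle$, so that $\gin_>(\langle I_2\rangle) = \gin_>(I) \neq \langle x^2, xy\rangle = \langle \gin_>(I)_2\rangle$. What is true in general is only the containment $\gin_>(\langle I_d\rangle) \supseteq \langle \gin_>(I)_d\rangle$, with equality precisely when $\langle I_d\rangle$ has a linear resolution: this follows from the Bayer--Stillman theorem that revlex gin preserves regularity together with the Eliahou--Kervaire description of resolutions of stable ideals. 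In other words, ``(iii) holds for all $d$'' is \emph{equivalent} to condition (1), so listing it as a pre-assembled ingredient begs the question. Its use inside the implication (1) $\Rightarrow$ (2) can be repaired, since there each $\langle I_d\rangle$ is assumed to have a linear resolution, but the repair requires the regularity-preservation theorem, which does not appear among your ingredients.

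The second, more serious, gap is in the crux of (3) $\Rightarrow$ (2). The mechanism you propose --- combining the equality of the $0$-th row with the pointwise bounds $\beta_{i,j}(I)\le\beta_{i,j}(\gin_>(I))$ and the equality of the alternating sums $\sum_i(-1)^i\beta_{i,j}$ --- provably cannot force equality of the higher rows. By the consecutive-cancellation principle, the Betti table of $I$ is obtained from that of $\gin_>(I)$ by deleting pairs $\beta_{i,j},\beta_{i+1,j}$, and any such cancellation with $i\ge 1$ preserves the $0$-th row, the direction of the pointwise inequalities, and every alternating sum, while changing the Betti table. So this numerical bookkeeping is consistent with strict inequality in rows $i \ge 1$, and no ``rigidity property'' can be extracted from (iv) and (v) alone. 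The published arguments instead proceed structurally: the equality $\beta_{0,j}(I)=\beta_{0,j}(\gin_>(I))$ for all $j$ is first shown to be equivalent to $\gin_>(\mm I)=\mm\,\gin_>(I)$, and this identity, combined with the preservation of regularity under revlex gin, yields that each $\gin_>(\langle I_d\rangle)$ equals $\langle \gin_>(I)_d\rangle$ and hence that each $\langle I_d\rangle$ has regularity $d$, i.e., a linear resolution. That module-theoretic step is the actual content of the theorem and is missing from your sketch.
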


\begin{proof}
  The equivalence of (1) and (2) is \cite[Theorem 1.1]{AHH2000}.
  The equivalence of (1) and (3) first appears in the
  paper of Conca \cite[Theorem 1.2]{C2004}, although Conca points out
  that this equivalence is implicit
  in the proof of \cite[Theorem 1.1]{AHH2000}.
  \end{proof}

\begin{example}
  Theorem \ref{equivalences} gives an alternative way to determine if
  an ideal is componentwise linear.  The computer algebra
  package Macaulay2 \cite{M2} is able to compute the generic
  initial ideal (although the algorithm is only probabilistic in the sense
  that it computes the initial ideal $g\cdot I$ for a random $g \in GL_n(k)$).

  Here is a sample session applied to the componentwise linear ideal $J$
  of Example \ref{running}.

  \footnotesize
\begin{verbatim}
i1 : R = QQ[x_1..x_5]

i2 : j = monomialIdeal(x_1*x_3,x_2*x_3,x_1*x_2*x_4*x_5)

i3 : loadPackage "GenericInitialIdeal"

i4 : gin j

o4 = ideal (x_1^2, x_1*x_2, x_2^4)
\end{verbatim}

\normalsize
Since the ideal and its generic initial ideal have the same
number of generators in each degree, the ideal is componentwise linear.
\end{example}

As mentioned in the introduction, componentwise linear ideals
are related to the notion of Koszulness.  We record this equivalence.

\begin{theorem}\label{koszul}
  Let $I$ be a homogeneous ideal of $R$.
  Assume ${\rm char}(k) = 0$.  Then
  $I$ is componentwise linear if and only if the $R$-module $I$ is a Koszul
  module.
\end{theorem}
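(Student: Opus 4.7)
The plan is to unpack the Koszul condition in the polynomial-ring setting and match it with the componentwise linear property via an analysis of the associated graded module $\gr_\mm(I)$. Since $R$ is standard graded with $\mm$ its maximal homogeneous ideal, there is a canonical isomorphism $\gr_\mm(R) \cong R$, so the Koszul condition on $I$ reduces to the statement that $\gr_\mm(I) = \bigoplus_{i \ge 0} \mm^i I/\mm^{i+1} I$ admits a linear resolution as a graded $R$-module.

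The heart of the plan is to compare $\gr_\mm(I)$ with the family of componentwise truncations $\langle I_d \rangle$. A set of homogeneous generators of $I$ gives rise to generators of $\gr_\mm(I)$ in the matching internal degrees at filtration degree zero, while higher filtration degrees record how $I$ is built up under multiplication by $\mm$. Working with the short exact sequences
\begin{equation*}
0 \longrightarrow \mm\langle I_{d-1}\rangle \longrightarrow \langle I_d\rangle \longrightarrow \langle I_d\rangle / \mm\langle I_{d-1}\rangle \longrightarrow 0,
\end{equation*}
together with a mapping-cone argument applied inductively over $d$, one shows that the resolution of $\gr_\mm(I)$ is linear if and only if each $\langle I_d \rangle$ has a linear resolution. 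Chaining these equivalences yields the theorem: $\gr_\mm(I)$ has a linear resolution precisely when every $\langle I_d\rangle$ does, i.e., precisely when $I$ is componentwise linear.

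In the characteristic-zero setting assumed in the theorem, there is an alternative route through Theorem \ref{equivalences}. The generic initial ideal $\gin_>(I)$ with respect to the reverse lexicographic order degenerates $I$ in a manner compatible with the $\mm$-adic filtration, so the Betti numbers of $\gin_>(I)$ bound those of $\gr_\mm(I)$ from above, and the linearity of the resolution of $\gr_\mm(I)$ corresponds to the numerical equality $\beta_{i,j}(I) = \beta_{i,j}(\gin_>(I))$ for all $i,j$. Theorem \ref{equivalences} then identifies this equality with componentwise linearity, closing the loop.

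The main obstacle is the careful bookkeeping of internal graded shifts needed to identify $\gr_\mm(I)$ with the object assembled from the componentwise truncations, and the verification that the mapping-cone argument at each step preserves minimality and produces the expected Betti numbers. Guaranteeing that no extraneous syzygies are introduced when passing through the filtration is the substantive technical content of the arguments of R\"omer \cite{R2001} and Yanagawa \cite{Yan2000}, and it is where the present plan would require the most care to make fully rigorous.
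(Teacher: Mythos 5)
The paper offers no internal proof of this equivalence --- it simply cites R\"omer and Yanagawa --- so the comparison here is between your sketch and the arguments in those references. Your first route is in the spirit of what they actually do, but as written it has a concrete gap, and your second route is circular.

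The gap in the first route: you never connect $\gr_\mm(I)$ to the ideals $\langle I_d\rangle$. The assertion that ``a mapping-cone argument applied inductively over $d$'' shows the resolution of $\gr_\mm(I)$ is linear if and only if each $\langle I_d\rangle$ has a linear resolution \emph{is} the theorem, and the short exact sequences you write down live entirely inside $R$ and never mention $\gr_\mm(I)$. The load-bearing step, which is absent, is the identification
\begin{equation*}
\gr_\mm(I) \;\cong\; \bigoplus_{d} \langle I_d\rangle/\mm\langle I_{d-1}\rangle
\end{equation*}
as a module over $\gr_\mm(R)\cong R$: for a graded ideal one has $(\mm^iI)_j = R_iI_{j-i}$, so the piece of $\gr_\mm(I)$ in filtration degree $i$ and internal degree $j$ is $R_iI_{j-i}/R_{i+1}I_{j-i-1}$, and grouping by $d=j-i$ yields the displayed direct sum of $\gr_\mm(R)$-submodules. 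Only with this in hand does Koszulness of $I$ become the statement that each $\langle I_d\rangle/\mm\langle I_{d-1}\rangle$ has a $d$-linear resolution; your exact sequences, together with the standard lemma that $\mm J$ has a linear resolution whenever $J$ does, the long exact sequence of $\Tor$, and induction on $d$ starting from the initial degree of $I$, then do close the argument in both directions. So the plan is salvageable, but you must supply this decomposition explicitly rather than gesture at ``bookkeeping of internal graded shifts.''

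The second route does not work. Upper semicontinuity gives $\beta_{i,j}(I)\le\beta_{i,j}(\gin_>(I))$ because $\gin_>(I)$ is a flat degeneration of $I$ itself, not of $\gr_\mm(I)$; there is no a priori comparison between the Betti numbers of $\gin_>(I)$ and those of $\gr_\mm(I)$. Claiming that linearity of the resolution of $\gr_\mm(I)$ ``corresponds to'' the equality $\beta_{i,j}(I)=\beta_{i,j}(\gin_>(I))$ is exactly the conjunction of the theorem being proved with Theorem \ref{equivalences}, so this paragraph assumes what it sets out to show. Note also that the Koszul--componentwise-linear equivalence holds in arbitrary characteristic, so any proof genuinely routed through Theorem \ref{equivalences} would in addition prove strictly less than the cited results.
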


\begin{proof}
  See R\"omer
  \cite[Theorem 3.2.8]{R2001}
  or Yanagawa \cite[Proposition 4.9]{Yan2000}.
\end{proof}

\begin{remark}
  Because of the equivalence of Theorem \ref{koszul},
  componentwise linear ideals are sometimes called
  {\it Koszul ideals}; for example, see \cite{DHNT2021}.
  \end{remark}

We end this section by introducing linear quotients,
an extremely useful technique to show that an ideal is componentwise linear.
Ideals with linear quotients were first defined by
Herzog and Takayama \cite{HT2002} for monomial ideals;  the
more general definition is given below.

\begin{definition}
  A homogeneous ideal $I$ has {\it linear quotients} if the  minimal
  generators of $I$ can be ordered as $f_1,\ldots,f_s$ such that for
  each $i=2,\ldots,s$, the ideal
  $\langle f_1,\ldots,f_{i-1} \rangle:\langle f_i \rangle$ is generated by
  linear forms.
\end{definition}

Linear quotients can then be used to verify that an ideal
is componentwise linear:

\begin{theorem}[{\cite[Theorem 8.2.15]{HH2011}}]
  \label{thm.LQimpliesCWL}
  Let $I$ be a homogeneous ideal, and suppose that $I$ has
  linear quotients with respect to a minimal set of generators of $I$.
  Then $I$ is componentwise linear.
\end{theorem}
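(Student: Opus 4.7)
The plan is to reduce componentwise linearity to the case of ideals generated in a single degree, and handle that case by a standard mapping cone argument.

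\textbf{Step 1 (single-degree case).} Suppose $J$ is a homogeneous ideal generated in a single degree $d$ with linear quotients with respect to minimal generators $g_1,\ldots,g_t$, so each colon ideal $L_i = \langle g_1,\ldots,g_{i-1}\rangle:\langle g_i\rangle$ is generated by linear forms. I would prove by induction on $i$ that $\langle g_1,\ldots,g_i\rangle$ has a $d$-linear resolution. The base case $i=1$ is immediate. For the inductive step, consider the short exact sequence
\begin{equation*}
0 \longrightarrow (R/L_i)(-d) \xrightarrow{\;\cdot g_i\;} R/\langle g_1,\ldots,g_{i-1}\rangle \longrightarrow R/\langle g_1,\ldots,g_i\rangle \longrightarrow 0.
\end{equation*}
Because $L_i$ is generated by linear forms, $R/L_i$ has regularity $0$, so $(R/L_i)(-d)$ has a $d$-linear resolution. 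The inductive hypothesis supplies a $d$-linear resolution of the middle term, and a mapping cone argument (combined with minimality verification) produces a $d$-linear resolution of $\langle g_1,\ldots,g_i\rangle$.

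\textbf{Step 2 (passing from $I$ to $\langle I_d\rangle$).} Let $f_1,\ldots,f_s$ be the minimal generators of $I$ with linear quotients. I would first reorder them to have nondecreasing degrees, which preserves linear quotients (a routine check), and then construct a minimal generating set of $\langle I_d\rangle$ of the form
\begin{equation*}
\bigl\{\, u f_i : \deg f_i \le d,\ \deg(u f_i) = d,\ u f_i \text{ is a minimal generator of } \langle I_d\rangle \,\bigr\},
\end{equation*}
ordered lexicographically by $(i,u)$, refining the original order on the $f_i$ by a fixed monomial order on the $u$. The colon ideal at such a generator splits into contributions from earlier generators $v f_j$ with $j < i$, which pull back to the linear colon ideals $L_j$ coming from the linear quotients of $I$, and contributions from $u' f_i$ with the same $f_i$, which are generated by the variables needed to move $u'$ to $u$.

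The main obstacle is verifying that both contributions really are generated by linear forms, in particular that lifting the linear colons $L_j$ through multiplication by the monomial $u$ does not inflate them to higher-degree generators; this uses the minimality of the chosen $u f_i$ together with the hypothesis on $I$. Once this is established, $\langle I_d\rangle$ is generated in a single degree with linear quotients, so Step 1 yields a $d$-linear resolution. Since $d$ was arbitrary, $I$ is componentwise linear.
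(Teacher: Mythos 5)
The paper gives no proof of this statement: it is quoted directly from \cite[Theorem 8.2.15]{HH2011}. Your outline follows the same standard route as that source --- show that each component $\langle I_d\rangle$ has linear quotients in a single degree, then deduce a $d$-linear resolution from the colon-ideal exact sequences --- and your Step 1 is correct as written. The reordering claim at the start of Step 2 is also a genuinely routine check (though it uses minimality of the generating set in an essential way: one needs it to rule out a nonzero constant when swapping two adjacent generators of different degrees).

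The gap is the remainder of Step 2, which is where the entire content of the theorem lives. Writing $P$ for the ideal generated by the earlier products, one computes that $P : uf_i = (L_i' + \langle u' : u' < u\rangle) : u$, where $L_i'$ is the colon of the ideal generated by the degree-$d$ part of $\langle f_1,\ldots,f_{i-1}\rangle$ against $f_i$. Three things obstruct the claimed splitting. First, $L_i'$ is only \emph{contained} in $L_i$, and a subideal of an ideal generated by linear forms need not be generated by linear forms, so the ``$j<i$ contribution'' does not simply pull back to the $L_j$. Second, the colon of a sum is not the sum of colons: $(A+B):u$ can strictly contain $(A:u)+(B:u)$, and these interaction terms are exactly where minimality and the nondecreasing-degree ordering must enter. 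Third, for a non-monomial ideal the products $uf_i$ of degree $d$ are typically linearly dependent, so ``$uf_i$ is a minimal generator of $\langle I_d\rangle$'' is not a property of an individual product; for instance, if $x-y \in L_i$ and $d = \deg f_i + 2$, then $y^2f_i = xyf_i - y(x-y)f_i$ already lies in the ideal generated by $xyf_i$ together with the degree-$d$ elements of $\langle f_1,\ldots,f_{i-1}\rangle$, so the naive colon at $y^2f_i$ is the unit ideal, and one must first select a basis --- a selection that itself depends on the same colon computations. You explicitly flag all of this as ``the main obstacle'' and then assume it resolved; but proving that, after discarding such redundancies, every remaining colon is generated by linear forms \emph{is} the theorem, not a verification that can be deferred.
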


\begin{example} If we consider the ideal $J = \langle x_1x_3,x_2x_3,x_1x_2x_4x_5 \rangle$ of Example \ref{running}, we have
$$
    \langle x_1x_3 \rangle:\langle x_2x_3 \rangle  =  \langle x_1 \rangle ~~~\mbox{and}~~~
    \langle x_1x_3, x_2x_3 \rangle:\langle x_1x_2x_4x_5\rangle =\langle x_1,x_2 \rangle.
$$
  The ideal $J$ has linear quotients, thus giving another way
  of seeing that $J$ is componentwise linear.
\end{example}

\begin{remark}\label{reisnerex}
  We want to stress that  the property of being
  componentwise linear (or having a linear resolution) depends
  upon the characteristic of the field $k$.   A well known example
  of this phenomenon, due to Reisner \cite{R1976}, is
  the square-free monomial ideal
  $$I =
\langle x_1x_2x_3,x_1x_2x_6,x_1x_3x_5,x_1x_4x_5,x_1x_4x_6,x_2x_3x_4,
 x_2x_4x_5,x_2x_5x_6,x_3x_4x_6,
x_3x_5x_6 \rangle$$
in $R =k[x_1,\ldots,x_6]$. The ideal $I$ has a linear resolution if and
only if ${\rm char}(k) \neq 2$.  So $I$ is componentwise linear if and
only if ${\rm char}(k) \neq 2$.
This example also shows that the converse of Theorem \ref{thm.LQimpliesCWL}
cannot hold, since linear quotients is a
property that does not ``see'' the characteristic of the field.
\end{remark}


\section{Componentwise linearity of (square-free) monomial ideals}

In this section we recall properties of (square-free) monomial
ideals that are componentwise linear.  A highlight of this section
is Herzog and Hibi's classification of square-free monomial ideals
that are componentwise linear, which was one of main results in
the paper that introduced componentwise linearity \cite{HH1999}.

Let $V = \{x_1,\ldots,x_n\}$ be a collection of vertices.  A {\it
  simplicial complex} $\Delta$ on $V$ is a subset of the power set of
$V$ that satisfies the following two properties:  (1) if $F \in \Delta$
and if $G \subseteq F$, then $G \in \Delta$; and (2) $\{x_i\} \in \Delta$
for all $i=1,\ldots,n$.  The maximal elements of $\Delta$ ordered with
respect to inclusion are called the {\it facets} of $\Delta$.  If
$F_1,\ldots,F_s$ is a complete list of the facets of $\Delta$, then
we usually write $\Delta = \langle F_1,\ldots,F_s \rangle$. In this
case, we say $\Delta$ is generated by $F_1,\ldots,F_s$.  An
element $F \in \Delta$ is called a {\it face} of $\Delta$.  The
{\it dimension} of $F$ is $\dim F = |F|-1$ (we use
the convention that $\dim \emptyset = -1$).  The {\it dimension}
of a simplicial complex is $\dim \Delta = \max \{ \dim F ~|~ F \in \Delta\}$.
A simplicial complex is {\it pure} if all of its facets have
the same dimension.

We associate with $\Delta$ a square-free monomial ideal
$I_\Delta$ in the polynomial ring $R = k[x_1,\ldots,x_n]$ as follows:
$$I_\Delta = \langle x_{i_1}\cdots x_{i_s} ~|~ \{x_{i_1},\ldots,x_{i_s}\} \not\in
\Delta \rangle.$$
The ideal $I_\Delta$ is the {\it Stanley-Reisner ideal} of $\Delta$; it
captures many of the combinatorial properties of $\Delta$.
This construction  can be reversed, that is, given any square-free monomial
ideal $I$, we can construct its \emph{Stanley-Reisner simplicial complex}
$$\Delta_I = \{ \{x_{i_1},\ldots,x_{i_s}\} ~|~
\mbox{$x_{i_1}\cdots x_{i_s}$ is a square-free monomial not in $I$} \}.$$

\begin{example}
  Consider our running example (see Example \ref{running})
  $$J = \langle x_1x_3,x_2x_3,x_1x_2x_4x_5\rangle \subseteq
  k[x_1,\ldots,x_5].$$
  On the vertex set $V = \{x_1,\ldots,x_5\}$, we then have
  $$\Delta_J = \langle
  \{x_3,x_4,x_5\},\{x_2,x_4,x_5\},\{x_1,x_4,x_5\},\{x_1,x_2,x_5\}, \{x_1,x_2,x_4\} \rangle.$$
\end{example}

We say that $\Delta$ is a {\it (sequentially) Cohen-Macaulay} simplicial complex
if the quotient ring $R/I_\Delta$ is (sequentially) Cohen-Macaulay. Particularly, sequentially Cohen-Macaulay simplicial complexes are described as follows:

\begin{definition}
  Let $\Delta$ be a simplicial complex with $\dim \Delta = d$.  For each
  $i=-1,\ldots,d$, let $\Delta(i) = \langle F \in \Delta ~|~
  \dim F = i \rangle,$ i.e., the simplicial complex generated
  by all the faces of dimension $i$ in $\Delta$.
  Then $\Delta$ is {\it sequentially Cohen-Macaulay} if $\Delta(i)$
  is Cohen-Macaulay for all $i=-1,\ldots,d$.
  \end{definition}

Given a simplicial complex $\Delta$, the {\it Alexander dual} of $\Delta$
is the simplicial complex
$$\Delta^\vee = \langle V\setminus F ~|~ F \not\in \Delta \rangle.$$
Using the above terminology, square-free
monomial ideals that are componentwise linear can be classified.
This result, which is due to Herzog and Hibi, is one of the first
major results about componentwise linear ideals.

\begin{theorem}[{\cite[Theorem 2.1]{HH1999}}]\label{scmstatement} Let $I$ be
  a square-free monomial ideal and let $\Delta = \Delta_I$ be its Stanley-Reisner simplicial complex.  Then $I$ is componentwise linear if and only if $\Delta^\vee$ is sequentially Cohen-Macaulay.
\end{theorem}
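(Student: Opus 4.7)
The plan is to reduce the componentwise linearity of $I_\Delta$ degree-by-degree to linear-resolution conditions on certain square-free subideals, then invoke the Eagon--Reiner theorem, which states that for a simplicial complex $\Gamma$ the ideal $I_\Gamma$ has a linear resolution if and only if $\Gamma^\vee$ is Cohen--Macaulay.

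First I would establish a \emph{square-free reduction}: for the square-free monomial ideal $I = I_\Delta$ and each integer $d \geq 0$, the ideal $\langle I_d \rangle$ has a linear resolution if and only if the subideal $I^{[d]}$ generated by the square-free monomials of degree $d$ in $I$ has a linear resolution. This is a multigraded Betti-number comparison: since $I$ is itself square-free, only the square-free multidegrees can carry the linear strand, and Hochster's formula allows one to strip away the contributions of the non-square-free monomials that enter $\langle I_d \rangle$ but not $I^{[d]}$. Consequently, $I_\Delta$ is componentwise linear if and only if $I^{[d]}$ has a linear resolution for every $d$.

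Second I would identify the combinatorics of $I^{[d]}$. Writing $I^{[d]} = I_{\Gamma_d}$, the complex $\Gamma_d$ consists of those $F \subseteq V$ every $d$-subset of which is a face of $\Delta$. A direct check shows that the minimal non-faces of $\Gamma_d$ are precisely the $d$-subsets of $V$ not in $\Delta$, so the facets of $\Gamma_d^\vee$ are exactly the $(n-d-1)$-dimensional faces of $\Delta^\vee$. In the notation of the paper this reads $\Gamma_d^\vee = (\Delta^\vee)(n-d-1)$, the pure subcomplex of $\Delta^\vee$ generated by its faces of dimension $n-d-1$. The Eagon--Reiner theorem then gives that $I^{[d]}$ has a linear resolution if and only if $(\Delta^\vee)(n-d-1)$ is Cohen--Macaulay. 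Chaining the equivalences, $I_\Delta$ is componentwise linear if and only if $(\Delta^\vee)(i)$ is Cohen--Macaulay for every $i$, which is exactly the definition of $\Delta^\vee$ being sequentially Cohen--Macaulay.

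The main obstacle is the square-free reduction in the first step. Although the principle that only square-free multidegrees matter for a square-free monomial ideal is intuitive, turning it into a rigorous statement comparing $\langle I_d \rangle$ with $I^{[d]}$ requires either a careful multigraded Hochster-style computation or an explicit polarization/depolarization argument linking the two resolutions. Once this reduction is in place, the Alexander-duality bookkeeping in the second step and the invocation of Eagon--Reiner are routine.
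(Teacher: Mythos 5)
The paper does not actually prove this statement --- it is quoted from \cite[Theorem 2.1]{HH1999} --- but your proposal correctly reconstructs the standard argument behind the citation, and every ingredient you need is already recorded in the survey: the square-free reduction is Theorem \ref{sqfreecwl} (= \cite[Proposition 1.5]{HH1999}), the second step is Theorem \ref{eagonreiner}, and the ``chained'' conclusion is exactly the paper's pure-skeleton definition of sequentially Cohen--Macaulay (which, relative to the filtration definition, is Duval's theorem). Your combinatorial bookkeeping is correct: the minimal nonfaces of $\Gamma_d$ are the $d$-subsets of $V$ not in $\Delta$, their complements are precisely the faces of $\Delta^\vee$ of cardinality $n-d$, so $\Gamma_d^\vee = (\Delta^\vee)(n-d-1)$ and Eagon--Reiner converts $I_{[d]}$ having a linear resolution into Cohen--Macaulayness of that skeleton.

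One caution on the step you flag as the main obstacle. You state the reduction degree-by-degree: for each fixed $d$, $\langle I_d\rangle$ has a linear resolution iff $I_{[d]}$ does. The ``only if'' direction is indeed a routine restriction/Hochster argument (any generator of $\langle I_d\rangle$ dividing a square-free monomial is itself square-free, so the two ideals have the same Betti numbers in square-free multidegrees, and $I_{[d]}$ has no others). The ``if'' direction is more delicate for a single $d$, because $\langle I_d\rangle = \mm\langle I_{d-1}\rangle + I_{[d]}$ also carries contributions pushed up from lower degrees; the clean statement, and the one actually proved in \cite{HH1999} by an induction over $d$ using all the components $I_{[j]}$ simultaneously, is the global equivalence of Theorem \ref{sqfreecwl}. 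Since the theorem you are proving only needs the global equivalence, you lose nothing by quoting Theorem \ref{sqfreecwl} in place of the sharper degree-by-degree claim, but as written that claim is asserted rather than established.
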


\begin{example}
  As noted multiple times, the ideal
  $J = \langle x_1x_3,x_2x_3,x_1x_2x_4x_5\rangle $ is componentwise linear.
  So the simplicial complex
  $$\Delta^\vee_J = \langle \{x_3\},\{x_2,x_4,x_5\},\{x_1,x_4,x_5\}\rangle$$
  is sequentially Cohen-Macaulay.
  \end{example}

The above theorem generalizes an important result of Eagon and Reiner.
We also record this result.

\begin{theorem}[{\cite[Theorem 3]{ER1998}}]\label{eagonreiner}
  Let $I$ be a square-free monomial ideal and let $\Delta=\Delta_I$ be its Stanley-Reisner simplicial complex.  Then $I$ has a linear resolution
  if and only if $\Delta^\vee$ is Cohen-Macaulay.
  \end{theorem}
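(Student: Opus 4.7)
The plan is to deduce the equivalence from two ingredients: Hochster's formula, which expresses the graded Betti numbers of $I_\Delta$ in terms of reduced simplicial homology of vertex-induced subcomplexes of $\Delta$; and the Alexander duality identity $\widetilde{H}_{i-1}(\Delta_W; k) \cong \widetilde{H}^{|W|-i-2}((\Delta^\vee)_W; k)$ relating the topology of a subcomplex of $\Delta$ on a vertex set $W$ to that of the complementary subcomplex of $\Delta^\vee$. Combining these produces a formula for $\beta_{i,j}(I_\Delta)$ as a sum of dimensions of reduced cohomology of restrictions of $\Delta^\vee$, and in particular yields Terai's identity $\reg(I_\Delta)=\pd(R/I_{\Delta^\vee})$.

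First, I would unpack what it means for $I_\Delta$ to have a linear resolution: the ideal must be generated in a single degree $d$ and must satisfy $\reg(I_\Delta)=d$. The single-generation condition is purely combinatorial, since the minimal generators of $I_\Delta$ correspond bijectively to the minimal non-faces of $\Delta$, which in turn correspond to the facets of $\Delta^\vee$ via $F \mapsto V\setminus F$. Thus $I_\Delta$ is generated in degree $d$ if and only if $\Delta^\vee$ is pure of dimension $n-d-1$, equivalently $\dim(R/I_{\Delta^\vee}) = n-d$.

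Next, I would translate the regularity condition. By Terai's identity, $\reg(I_\Delta) = d$ becomes $\pd(R/I_{\Delta^\vee}) = d$. The Auslander--Buchsbaum formula then rewrites this as $\depth(R/I_{\Delta^\vee}) = n-d$, which is exactly the value of $\dim(R/I_{\Delta^\vee})$ computed in the previous step. Hence the linear resolution condition on $I_\Delta$ is equivalent to the equality $\dim(R/I_{\Delta^\vee}) = \depth(R/I_{\Delta^\vee})$, which is precisely the Cohen--Macaulay property for $\Delta^\vee$.

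The main technical obstacle is establishing Terai's identity itself. The bookkeeping inherent in Hochster's formula together with Alexander duality --- especially tracking the homological degree shifts and the complementation of subsets $W$ inside the ambient vertex set $V$ --- is the step most prone to off-by-one errors; once this identity is in hand the remaining argument is essentially formal, amounting to the Auslander--Buchsbaum rewriting together with the dictionary between minimal generators of $I_\Delta$ and facets of $\Delta^\vee$.
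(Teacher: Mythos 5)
The paper does not prove this statement; it is quoted directly as \cite[Theorem 3]{ER1998}, so there is no internal proof to compare against. Your proposed route --- establish Terai's identity $\reg(I_\Delta)=\pd(R/I_{\Delta^\vee})$ from Hochster's formula plus combinatorial Alexander duality, then combine it with Auslander--Buchsbaum and the dictionary between minimal generators of $I_\Delta$ and facets of $\Delta^\vee$ --- is a correct and by now standard way to obtain Eagon--Reiner, and the squeeze $\depth(R/I_{\Delta^\vee}) = n-\reg(I_\Delta) \le n-d_{\max} \le n-d_{\min} = \dim(R/I_{\Delta^\vee})$ does show, as you assert, that Cohen--Macaulayness alone forces equigeneration, so both implications go through. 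Two remarks. First, the duality identity as you wrote it is not quite right: $\widetilde{H}_{i-1}(\Delta_W;k)$ is dual to the cohomology of the Alexander dual of $\Delta_W$ taken \emph{within the ground set $W$}, which equals $\link_{\Delta^\vee}(V\setminus W)$, not the restriction $(\Delta^\vee)_W$; since this is exactly the bookkeeping you flag as error-prone, you should fix it before claiming Terai's identity. Second, the original Eagon--Reiner argument does not pass through the full Terai identity (which historically came after): it applies Hochster's formula to $\Delta^\vee$ and invokes Reisner's criterion, which characterizes Cohen--Macaulayness by vanishing of reduced homology of links; that route is slightly more economical if you only want this theorem, whereas yours buys the stronger regularity--projective dimension duality along the way.
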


\begin{example}
  If $I$ is the square-free monomial ideal of Remark \ref{reisnerex},
  then the ideal $I$ has a linear resolution if and
only if ${\rm char}(k) \neq 2$.  So $\Delta^\vee_I$ is Cohen-Macaulay
if and only if ${\rm char}(k) \neq 2$.
  \end{example}

Theorem \ref{scmstatement} provides a new strategy to prove that
an ideal is componentwise linear.  In particular, instead of showing
that $I$ is componentwise linear, it is enough to show that $\Delta^\vee_I$
is sequentially Cohen-Macaulay.  There are two combinatorial
ways to determine if a simplicial complex is sequentially Cohen-Macaulay;
we first give some relevant definitions.

For a vertex $x$ in $\Delta$, the \emph{deletion} of $x$ in $\Delta$, denoted by $\del_\Delta(x)$, is the simplicial complex obtained by removing $x$ and all faces containing $x$ from $\Delta$. Also, the \emph{link} of $x$ in $\Delta$, denoted by $\link_\Delta(x)$, is the simplicial complex whose faces are
	$$\{F \in \Delta ~\big|~ x \not\in F \text{ and } F \cup \{x\} \in \Delta\}.$$

\begin{definition} Let $\Delta = \langle F_1,\ldots,F_s \rangle$
  be a simplicial complex.
  \begin{enumerate}
  	\item The complex $\Delta$ is {\it shellable} if there exists a linear order of its facets $F_1, \dots, F_s$ such that for all $i = 2, \dots, s$, the subcomplex $\langle F_1, \dots, F_{i-1}\rangle \cap \langle F_i\rangle$ is pure and of dimension $(\dim F_i -1)$.
\item The complex $\Delta$ is {\it vertex decomposable} if either:
\begin{enumerate}
\item $\Delta$ is a {\it simplex} (i.e., it has a unique facet);
  or the empty complex; or
	\item there exists a vertex $x$ in $\Delta$ such that all facets of $\del_\Delta(x)$ are facets of $\Delta$ (i.e.,$x$ is a {\it shedding vertex}), and both $\link_\Delta(x)$ and $\del_\Delta(x)$ are vertex decomposable.
\end{enumerate}
    \end{enumerate}
  \end{definition}

\begin{remark}
The notions of shellability and vertex decomposability were first given for {\it pure} simplicial complexes, that is, when all the facets
have the same dimension.  These notions were generalized by
Bj\"orner and Wachs \cite{BW1996,BW1997} to include nonpure simplicial complexes.
\end{remark}

One approach in the literature (which we will return to later in the paper)
to show a square-free monomial ideal is componentwise linear is to show that
the associated simplicial complex has one of the above properties.

\begin{theorem}\label{vd=>shellable=>scm}
  Let $I$ be a square-free monomial ideal with
  associated simplicial complex $\Delta = \Delta_I$.
  If $\Delta^\vee$, the Alexander dual of $\Delta$, is either
  vertex decomposable or shellable, then $I$ is componentwise linear.
\end{theorem}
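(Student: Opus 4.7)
The plan is to reduce the theorem to Theorem \ref{scmstatement} by establishing the classical chain of implications
\[
\text{vertex decomposable} \;\Longrightarrow\; \text{shellable} \;\Longrightarrow\; \text{sequentially Cohen-Macaulay}
\]
for nonpure simplicial complexes in the sense of Björner--Wachs. Once these two implications are in hand, applying them to $\Delta^\vee$ yields that $\Delta^\vee$ is sequentially Cohen-Macaulay, and then Theorem \ref{scmstatement} immediately delivers componentwise linearity of $I$. So the entire argument comes down to proving these two combinatorial implications, both of which hold at the level of the complex $\Delta^\vee$ alone with no further reference to $I$.

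For vertex decomposable $\Longrightarrow$ shellable, I would argue by induction on the number of vertices of $\Delta^\vee$, with the base cases (a simplex or the empty complex) trivial. For the inductive step, fix a shedding vertex $x$ of $\Delta^\vee$. By definition both $\link_{\Delta^\vee}(x)$ and $\del_{\Delta^\vee}(x)$ are vertex decomposable, and by the inductive hypothesis admit shellings $G_1, \ldots, G_p$ and $H_1, \ldots, H_q$ of their facets respectively. Since $x$ is a shedding vertex, every facet of $\del_{\Delta^\vee}(x)$ is also a facet of $\Delta^\vee$, so the complete list of facets of $\Delta^\vee$ is $\{x\}\cup G_1, \ldots, \{x\}\cup G_p, H_1, \ldots, H_q$. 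The plan is to verify that this concatenated ordering satisfies the Björner--Wachs (nonpure) shelling condition; the only nontrivial check happens at the boundary between the two blocks, and there the shedding property of $x$ is precisely what forces $\langle \text{earlier facets}\rangle \cap \langle H_j\rangle$ to be pure of codimension one in $H_j$.

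For shellable $\Longrightarrow$ sequentially Cohen-Macaulay, the plan is to show that for each $i = -1, 0, \ldots, \dim \Delta^\vee$, the pure $i$-th skeleton $\Delta^\vee(i)$ inherits a pure shelling from the given shelling of $\Delta^\vee$: list the $i$-dimensional facets of $\Delta^\vee$ in the induced order, then append the $i$-dimensional faces obtained from suitable boundary pieces of the higher-dimensional facets. Then the classical result that a pure shellable complex is Cohen-Macaulay (via Reisner's criterion) yields that each $\Delta^\vee(i)$ is Cohen-Macaulay, which is exactly the definition of sequentially Cohen-Macaulay. The main obstacle is the combinatorial bookkeeping in both steps---the interface verification in the first implication and the construction of the pure shelling of $\Delta^\vee(i)$ in the second---and since these are precisely the nontrivial content of \cite{BW1996, BW1997}, the cleanest route in a survey is to invoke those results directly rather than reproduce the combinatorics in full.
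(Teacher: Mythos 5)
Your overall reduction is exactly the paper's proof: invoke \cite[Theorem 11.3]{BW1997} for ``vertex decomposable $\Rightarrow$ shellable,'' Stanley's observation (p.~87 of \cite{S1996}) for ``shellable $\Rightarrow$ sequentially Cohen--Macaulay,'' and then apply Theorem \ref{scmstatement} to $\Delta^\vee$. Since you ultimately propose to cite \cite{BW1996,BW1997} for the two combinatorial implications rather than reprove them, the two arguments coincide.

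One concrete warning about the sketch you give of the first implication: the concatenated order you propose --- the facets $\{x\}\cup G_1,\dots,\{x\}\cup G_p$ containing the shedding vertex first, followed by the facets $H_1,\dots,H_q$ of $\del_{\Delta^\vee}(x)$ --- is the wrong way around for nonpure complexes. Take $\Delta^\vee=\langle\{x,a\},\{a,b,c\}\rangle$: here $x$ is a shedding vertex, since $\del_{\Delta^\vee}(x)=\langle\{a,b,c\}\rangle$ and $\link_{\Delta^\vee}(x)=\langle\{a\}\rangle$ are simplices and the unique facet of the deletion is a facet of $\Delta^\vee$. In your order the second facet $\{a,b,c\}$ meets the earlier facet $\{x,a\}$ in $\langle\{a\}\rangle$, which is pure of dimension $0\neq\dim\{a,b,c\}-1$, so the Bj\"orner--Wachs condition fails. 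The order that does work lists the shelling of $\del_{\Delta^\vee}(x)$ first and then appends $\{x\}\cup G_1,\dots,\{x\}\cup G_p$; at the interface one uses that each facet $G_i$ of the link is a face of $\del_{\Delta^\vee}(x)$ and hence lies in some earlier facet $H_k$, so that $(\{x\}\cup G_i)\cap H_k=G_i$ is the required face of dimension $\dim(\{x\}\cup G_i)-1$. Because you defer to \cite{BW1996,BW1997} in the end, this does not invalidate your proof of the theorem, but the explicit ordering as written would not survive the verification you describe.
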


\begin{proof}
  By \cite[Theorem 11.3]{BW1997}, a vertex decomposable
  simplicial complex $\Gamma$ is also shellable.
  Stanley (see page 87 of \cite{S1996}) first observed
  that if $\Gamma$ is a shellable simplicial complex, then
  $\Gamma$ is sequentially Cohen-Macaulay.  The conclusion
  then follows from Theorem \ref{scmstatement} since both
  hypotheses imply that $\Delta^\vee$ is sequentially Cohen-Macaulay.
\end{proof}

For the case of square-free monomial ideals, there is an alternative
way to verify that the ideal is componentwise linear.  Given
a square-free monomial ideal $I$, let $I_{[d]}$ be the ideal generated
by all the square-free monomials of degree $d$ in $I$.  We then say $I$ is
{\it square-free componentwise linear} if $I_{[d]}$ has a linear resolution
for all $d$.  Checking whether a square-free monomial ideal is componentwise
linear then reduces to checking whether or not it is square-free
componentwise linear.

\begin{theorem}[{\cite[Proposition 1.5]{HH1999}}]\label{sqfreecwl}
   Let $I$ be a square-free
    monomial ideal.  Then $I$ is componentwise linear if and only if
    $I$ is square-free componentwise linear.
\end{theorem}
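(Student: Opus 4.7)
The plan is to deduce the equivalence from the Alexander-duality results already recorded, namely Theorem \ref{scmstatement} and Theorem \ref{eagonreiner}. Write $\Delta = \Delta_I$ for the Stanley--Reisner complex of $I$ and $V = \{x_1,\ldots,x_n\}$. By Theorem \ref{scmstatement}, $I$ is componentwise linear if and only if $\Delta^\vee$ is sequentially Cohen--Macaulay, which by definition is the statement that the pure $i$-dimensional subcomplex $(\Delta^\vee)(i)$ is Cohen--Macaulay for every $i$. By Theorem \ref{eagonreiner}, $I$ is square-free componentwise linear if and only if $(\Delta_{I_{[d]}})^\vee$ is Cohen--Macaulay for every $d$. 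The plan is therefore to match up $(\Delta^\vee)(i)$ with $(\Delta_{I_{[d]}})^\vee$ under a suitable reindexing.

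The heart of the argument will be the combinatorial identity
$$(\Delta^\vee)(n-d-1) \;=\; (\Delta_{I_{[d]}})^\vee \qquad \text{for every } d \geq 0.$$
To establish this, first describe the minimal non-faces of $\Delta_{I_{[d]}}$: since the generators of $I_{[d]}$ are the monomials $\prod_{x \in T} x$ with $T \subseteq V$, $|T|=d$, and $T \notin \Delta$, and since no subset of size strictly less than $d$ can contain such a $T$, the minimal non-faces of $\Delta_{I_{[d]}}$ are precisely those $d$-element subsets $T \subseteq V$ with $T \notin \Delta$. Consequently the facets of $(\Delta_{I_{[d]}})^\vee$ are the sets $V \setminus T$ with $|T|=d$ and $T \notin \Delta$. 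On the other side, an $(n-d-1)$-dimensional face of $\Delta^\vee$ is a set $F$ of size $n-d$ with $V \setminus F \notin \Delta$; setting $T = V \setminus F$, the facets of the pure skeleton $(\Delta^\vee)(n-d-1)$ are also the sets $V \setminus T$ with $|T|=d$ and $T \notin \Delta$. Both complexes are pure of dimension $n-d-1$ and share the same facets, so they coincide.

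With the identity in hand, the equivalence follows formally: $\Delta^\vee$ is sequentially Cohen--Macaulay iff $(\Delta^\vee)(i)$ is Cohen--Macaulay for every $i$, iff, via the identity and Theorem \ref{eagonreiner}, $I_{[d]}$ has a linear resolution for every $d$. The main technical step is the combinatorial identity, and within that the only real point requiring care is the description of the minimal non-faces of $\Delta_{I_{[d]}}$; everything else is a formal combination of Theorems \ref{scmstatement} and \ref{eagonreiner}. The approach has the added benefit of being self-contained within the machinery developed earlier in the paper, and avoids a more computational attack through the decomposition $\langle I_d\rangle = I_{[d]} + \mm\langle I_{d-1}\rangle$ that would require a delicate regularity comparison for the intersection $I_{[d]} \cap \mm\langle I_{d-1}\rangle$.
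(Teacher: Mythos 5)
Your argument is correct, and it takes a genuinely different route from the one the paper points to: the survey offers no proof of this statement, citing Herzog and Hibi's original Proposition 1.5, whose proof is the computational one you describe avoiding (it manipulates the ideals $\langle I_d\rangle$, $I_{[d]}$ and $\mm\langle I_{d-1}\rangle$ directly and needs a transfer lemma for linearity of $\mm J$). Your key identity $(\Delta^\vee)(n-d-1) = (\Delta_{I_{[d]}})^\vee$ checks out: the minimal non-faces of $\Delta_{I_{[d]}}$ are exactly the $d$-subsets $T \notin \Delta$ (no smaller set can contain one), so both complexes are generated by the sets $V\setminus T$ with $|T|=d$ and $T\notin\Delta$; moreover, as $d$ runs over the values with $I_{[d]}\neq 0$, the indices $n-d-1$ sweep out exactly $-1,\ldots,\dim\Delta^\vee$, while the degenerate $d$ with $I_{[d]}=0$ contribute nothing on either side, so the matching with the sequential Cohen--Macaulay condition is complete. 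One caveat deserves emphasis: your proof is only as independent as its inputs. In Herzog--Hibi's paper, Theorem \ref{scmstatement} is itself established by combining Proposition 1.5 --- the very statement you are proving --- with Eagon--Reiner and essentially the same skeleton-versus-dual identity you write down; read against the original literature, your argument runs their implication backwards and is circular unless one supplies a proof of Theorem \ref{scmstatement} that does not pass through square-free componentwise linearity. Within the survey, where Theorems \ref{scmstatement} and \ref{eagonreiner} are black boxes, your derivation is a clean, computation-free way to see why the reduction to square-free components must hold; what the original computational approach buys is a genuinely independent proof together with auxiliary lemmas that are useful elsewhere.
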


\begin{remark}
  If $I$ is a square-free monomial ideal in $R = k[x_1,\ldots,x_n]$, then $I$ cannot have any square-free monomials of degree $> n$.  So,
  to check if $I$ is square-free componentwise linear, we only need
  to check that $I_{[d]}$ has a linear resolution for $0 \leq d \leq n$.  In fact,
  since there is only one square-free monomial of degree $n$,
  namely $x_1\cdots x_n$, either $I_{[n]} = \langle 0 \rangle$ or
  $I_{[n]}= \langle x_1\cdots x_n \rangle$, and so $I_{[n]}$ always has a linear
  resolution.  So we only need to check for $0 \leq d < n$.
  \end{remark}

We expand our scope to now say a few words about monomial ideals
more generally, and not just the square-free case.  One common
approach to studying monomial ideals is to use the process
of polarization to turn the monomial ideal into a square-free monomial
ideal in a much larger polynomial ring.  In many instances,
properties of the original ideal are preserved in the new ideal, and
vice versa.   It turns out that the linear quotient property
is preserved under this operation.

We formally define the polarization procedure.
Let $I = \langle x_1^{a_{1,1}}\cdots x_n^{a_{1,n}},\ldots,
x_1^{a_{t,1}}\cdots x_n^{a_{t,n}} \rangle$ be a monomial ideal
in $R = k[x_1,\ldots,x_n]$.  For $j=1,\ldots,n$, set
$b_j = \max\{a_{i,j} ~|~ 1 \leq i \leq t \}$, that is, $b_j$ is
the highest power of $x_j$ that appears among the generators of
$I$.  In a polynomial ring $$S=k[x_{1,1},\ldots,x_{1,b_1},x_{2,1},\ldots,x_{2,b_2},
  \ldots,x_{n,1},\ldots,x_{n,b_n}]$$
we define the {\it polarization} of $I$ to be the ideal
\footnotesize
$$
  I^{{\rm pol}} = \langle x_{1,1}\cdots x_{1,a_{1,1}}x_{2,1}\cdots x_{2,a_{1,2}}
  \cdots x_{n,1}\cdots x_{n,a_{1,n}}, \ldots,
  x_{1,1}\cdots x_{1,a_{t,1}}x_{2,1}\cdots x_{2,a_{t,2}}
  \cdots x_{n,1}\cdots x_{n,a_{t,n}} \rangle.
  $$
    \normalsize
    That is, we replace $x_j^{a_{k,j}}$ with $x_{j,1}\cdots x_{j,a_{k,j}}$
    in each generator of $I$.

    As we saw earlier, linear quotients is a technique that can
    be used to check if an ideal is componentwise linear.  The following
    result of Seyed Fakhari shows that for an arbitrary monomial ideal,
    we can use the polarization of the ideal to check if the original
    ideal has linear quotients.

\begin{theorem}[{\cite[Lemma 3.5]{SF2018}}]\label{linquot-polar}
  Let $I$ be a monomial ideal.
  Then $I$ has linear quotients if and only $I^{\rm pol}$, the polarization of $I$,
  has linear quotients.
  \end{theorem}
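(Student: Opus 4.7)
The plan is to show the stronger statement that a single total ordering $u_1,\dots,u_s$ of the minimal generators of $I$ realizes linear quotients for $I$ if and only if the corresponding ordering $u_1^{\pol},\dots,u_s^{\pol}$ realizes linear quotients for $I^{\pol}$. Since polarization gives a degree-preserving bijection between the minimal monomial generators of $I$ and those of $I^{\pol}$, orderings on the two sides correspond, and the theorem reduces to verifying, for each fixed $i$, that the colon
$$L_i := \langle u_1,\dots,u_{i-1}\rangle : \langle u_i\rangle$$
is generated by linear forms if and only if the analogous colon $L_i^{\pol}$ for the polarized generators is.

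Both $L_i$ and $L_i^{\pol}$ are monomial ideals, and a monomial ideal generated by linear forms is in fact generated by a set of variables. So the linear quotients condition at step $i$ becomes the following combinatorial statement: for every $j<i$ there exists $j'<i$ such that $\lcm(u_{j'},u_i)/u_i$ is a single variable that divides $\lcm(u_j,u_i)/u_i$ (and similarly for the polarized generators). The proof then rests on a bijective translation of this condition between the original ring and the polarized ring.

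Writing $u_i=\prod_l x_l^{c_{i,l}}$, a direct computation gives
$$\frac{\lcm(u_j^{\pol},u_i^{\pol})}{u_i^{\pol}}\;=\;\prod_l\prod_{r=c_{i,l}+1}^{\max(c_{j,l},\,c_{i,l})} x_{l,r}.$$
From this one reads off two key correspondences: (i) $\lcm(u_{j'},u_i)/u_i=x_k$ is equivalent to $\lcm(u_{j'}^{\pol},u_i^{\pol})/u_i^{\pol}=x_{k,\,c_{i,k}+1}$, both being encoded by the conditions $c_{j',k}=c_{i,k}+1$ and $c_{j',l}\le c_{i,l}$ for $l\ne k$; and (ii) $x_k$ divides $\lcm(u_j,u_i)/u_i$ if and only if $x_{k,\,c_{i,k}+1}$ divides $\lcm(u_j^{\pol},u_i^{\pol})/u_i^{\pol}$, since both are equivalent to $c_{j,k}>c_{i,k}$. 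Thus the combinatorial condition transports faithfully between the two settings.

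The main technical point, which I expect to be the chief obstacle, is showing that no \emph{extraneous} variables appear in $L_i^{\pol}$: a priori the polarized colon could be generated by some $x_{k,m}$ with $m\ne c_{i,k}+1$, which has no counterpart on the unpolarized side. One must verify that if $u_{j'}^{\pol}$ divides $x_{k,m}\cdot u_i^{\pol}$ and $u_{j'}\ne u_i$, then necessarily $m=c_{i,k}+1$. This follows from the ``prefix'' shape of each polarized monomial $u_{j'}^{\pol}=\prod_l x_{l,1}\cdots x_{l,c_{j',l}}$, which leaves no room for gaps in any block of variables: any other value of $m$ either forces $u_{j'}$ to divide $u_i$ (contradicting minimality of $u_{j'}$ as a generator) or causes $x_{k,\,c_{i,k}+1}$ to appear in $u_{j'}^{\pol}$ but not in $x_{k,m}\cdot u_i^{\pol}$. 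Once this is established, the variable-and-divisibility characterization at step $i$ is logically the same on the two sides, and the theorem follows in both directions.
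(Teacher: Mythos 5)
The survey quotes this result from \cite[Lemma 3.5]{SF2018} without reproducing a proof, so there is nothing in the paper itself to compare against; your argument is correct and is essentially the proof given in the cited source: one translates the standard combinatorial criterion for linear quotients of a monomial ideal (for each $j<i$ some $\lcm(u_{j'},u_i)/u_i$ is a variable dividing $\lcm(u_j,u_i)/u_i$) through the explicit formula for the polarized colon, and checks that no variables $x_{k,m}$ with $m\neq c_{i,k}+1$ can occur. Your treatment of that last point --- the only genuinely delicate step, ruled out via the prefix shape of polarized monomials and the minimality of the generators --- is sound, so the proof is complete.
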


\begin{example}
  Let $I = \langle y_1y_3,y_2y_3,y_1y_2y_4^2 \rangle$ in $R=k[y_1,y_2,y_3,y_4]$.
  We then
  have $b_1 = 1,~b_2 = 1,~b_3=1$, and $b_4=2$.  In the polynomial ring
  $S = k[y_{1,1},y_{2,1},y_{3,1},y_{4,1},y_{4,2}]$, the polarization
  of $I$ is $$I^{\rm pol} = \langle y_{1,1}y_{3,1},y_{2,1}y_{3,1},y_{1,1}y_{2,1}y_{4,1}y_{4,2}\rangle.$$
  If we relabel the variables
  so that $x_1 = y_{1,1}, x_2 = y_{2,1}, x_3 = y_{3,1}, x_4 = y_{4,1}$
  and $x_5 =y_{4,2}$, then $I^{\rm pol}$ is the ideal $J$ of our running example.
  So $I$ has linear quotients (which can also be checked directly).
\end{example}

There are many classes of monomial ideals which have been identified
as componentwise linear.  While we do not survey all of this literature
(since we wish to focus on powers of ideals), we highlight some
families that are relevant for our future discussions.

A monomial ideal $I$ is {\it weakly polymatroidal} if for every
pair of minimal generators $m_1 = x_1^{a_1}\cdots x_n^{a_n}$ and
$m_2 = x_1^{b_1}\cdots x_{n}^{b_n}$ with $m_1 > m_2$
with respect to the the lexicographical
ordering, and if $a_1=b_1,\ldots,a_{t-1}=b_{t-1}$ but $a_t > b_t$,
then there exists a $j > t$ such that $x_t(m_2/x_j) \in I$.
The notion of weakly polymatroidal generalizes the notation of a stable
ideal.  An ideal $I$ is {\it stable} if for any monomial
$m =  x_{i_1}^{a_{i_1}}\cdots x_{i_s}^{a_{i_s}}$ in $I$, if $j < {i_s}$,
then $x_j(m/x_{i_s}) \in I$.   As shown by Mohammadi and Moradi,
these ideals all have linear quotients, thus providing us
with a large class of componentwise linear monomial ideals.

\begin{theorem}[{\cite[Theorem 1.3]{MM2010}}]\label{thm.weaklypoly}
  If $I$ is a weakly polymatroidal ideal, then
  $I$ has linear quotients, and consequently, $I$ is componentwise
  linear.
\end{theorem}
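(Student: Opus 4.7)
The plan is to establish that $I$ has linear quotients with respect to the decreasing lexicographic order on its minimal generators, after which Theorem~\ref{thm.LQimpliesCWL} immediately delivers componentwise linearity. Order the minimal generators so that $f_1 >_{\mathrm{lex}} f_2 >_{\mathrm{lex}} \cdots >_{\mathrm{lex}} f_s$. Because $I$ is monomial, the colon ideal satisfies $\langle f_1,\ldots,f_{i-1}\rangle : \langle f_i\rangle = \langle f_k/\gcd(f_k,f_i) : k < i\rangle$, so it suffices to produce, for each pair $k < i$, a variable $x_t$ dividing $f_k/\gcd(f_k,f_i)$ with the property that $x_t f_i \in \langle f_1,\ldots,f_{i-1}\rangle$.

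Fix such a pair and write $f_k = x_1^{a_1}\cdots x_n^{a_n}$ and $f_i = x_1^{b_1}\cdots x_n^{b_n}$. Let $t$ be the smallest index with $a_t > b_t$; then $x_t$ divides $f_k/\gcd(f_k,f_i)$. The weakly polymatroidal hypothesis applied to the pair $(f_k, f_i)$ supplies an index $j > t$ with $x_t(f_i/x_j) \in I$, and since $x_t f_i = x_j \cdot x_t(f_i/x_j)$, any minimal generator $g$ of $I$ dividing $x_t(f_i/x_j)$ automatically divides $x_t f_i$. It remains to exhibit such a $g$ with $g >_{\mathrm{lex}} f_i$, for then $g = f_{k'}$ with $k' < i$ and we are done.

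First, $g \neq f_i$ because $j > t$ prevents $f_i$ from dividing $x_t(f_i/x_j)$. The minimality of $f_i$ then forces the exponent of $x_t$ in $g$ to equal exactly $b_t+1$ (otherwise $g$ would be a proper divisor of $f_i$), so $g >_{\mathrm{lex}} f_i$ the moment the exponents of $g$ at positions $l < t$ coincide with those of $f_i$. If they do not, let $l_0 < t$ be the first index of disagreement; $g$'s exponent at $x_{l_0}$ is then strictly less than $b_{l_0}$, and applying the weakly polymatroidal condition to the pair $(f_i, g)$ at position $l_0$ yields $j'' > l_0$ with $x_{l_0}(g/x_{j''}) \in I$. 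A direct exponent comparison shows that this new monomial still divides $x_t(f_i/x_j)$; selecting a minimal generator dividing it and iterating with care, one arrives at a minimal generator $g^* >_{\mathrm{lex}} f_i$ still dividing $x_t(f_i/x_j)$.

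The main obstacle is precisely this iterative step: one must choose the minimal generator at each stage so that the exponent pattern actually advances toward that of $f_i$, and rigorously verifying termination requires careful control over how the disagreement positions evolve under repeated applications of the weakly polymatroidal condition. Once linear quotients are established, Theorem~\ref{thm.LQimpliesCWL} yields that $I$ is componentwise linear.
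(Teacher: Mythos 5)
The paper does not actually prove this statement---it is quoted from Mohammadi--Moradi \cite{MM2010}---so there is no internal proof to compare against; your strategy (order the minimal generators decreasingly in the lexicographic order and verify linear quotients directly) is the same one used in the cited source. The first two-thirds of your argument is sound: the reduction of the colon ideal to the monomials $f_k/\gcd(f_k,f_i)$, the identification of $t$ as the first index of disagreement (so that the weakly polymatroidal hypothesis applies to the pair $(f_k,f_i)$ and $x_t$ divides $f_k/\gcd(f_k,f_i)$), the choice of $j>t$ with $x_t(f_i/x_j)\in I$, and the observation that any minimal generator $g$ dividing $x_t(f_i/x_j)$ must satisfy $g\neq f_i$ and have $x_t$-exponent exactly $b_t+1$ are all correct.

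The genuine gap is exactly the step you flag and do not close. What you must produce is a minimal generator $g$ with $g\mid x_tf_i$ and $g>_{\mathrm{lex}}f_i$, and nothing you prove excludes the possibility that every minimal generator dividing $x_t(f_i/x_j)$ has a strictly smaller exponent than $f_i$ at some position before $t$, hence is lex-\emph{smaller} than $f_i$. Your proposed repair---apply the weakly polymatroidal condition to $(f_i,g)$ at the first disagreement $l_0<t$ and pass to a minimal generator of $x_{l_0}(g/x_{j''})$---does not visibly make progress: the new generator is only constrained to have exponents bounded above by those of $x_{l_0}(g/x_{j''})$, so its exponents at positions \emph{before} $l_0$ may drop below those of $g$, the first disagreement index can move backward rather than forward, and no monotone quantity (degree, disagreement index, lex order of the iterates) is exhibited that would rule out cycling among the finitely many minimal generators dividing $x_t(f_i/x_j)$. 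Note that in the equigenerated case the issue disappears, since $x_t(f_i/x_j)$ then has the same degree as every generator and is therefore itself a minimal generator, automatically lex-greater than $f_i$ (this is the Kokubo--Hibi setting); in the general case the required lex-greater generator is not always found among the divisors of the single monomial $x_t(f_i/x_j)$, and establishing its existence is the real content of \cite[Theorem 1.3]{MM2010}. As written, your argument proves the theorem only for ideals generated in a single degree.
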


We now introduce another class of ideals.
For any $J = \{j_1,\ldots,j_s\} \subseteq \{1,\ldots,n\}$, we define
$\mathfrak{m}_J = \langle x_{j_1},\ldots,x_{j_s} \rangle$.  For
any integer $a \geq 1$, we call $\mathfrak{m}_J^a$ a {\it Veronese ideal}
(see \cite{HH2006}).
Given subsets $J_1,\ldots,J_s$ of $\{1,\ldots,n\}$ and
positive integers $a_1,\ldots,a_s$ we call
$$I = \mathfrak{m}_{J_1}^{a_1} \cap \cdots \cap \mathfrak{m}_{J_s}^{a_s}$$
an {\it intersection of Veronese ideals}.  Ideals of this type
appear throughout the literature (for example, the primary decomposition
of a square-free monomial ideal can viewed as the intersection of
Veronese ideals).  In some cases, we can determine
if $I$ is componentwise linear simply from the subsets $J_1,\ldots,J_s$.
One such example is the following result of
Francisco and Van Tuyl \cite[Theorem 3.1]{FVT2007b}, and generalized
by Mohammadi and Moradi (whose result is presented below).

\begin{theorem}[{\cite[Theorem 2.5]{MM2010}}]\label{veronese}
  Let $J_1,\ldots,J_s,K$ be subsets of $[n] = \{1,\ldots,n\}$.
  Suppose that $J_i \cup J_j = [n]$ for
  all $i \neq j$ and
  $K \subseteq [n]$.  Then
  $$I = \mathfrak{m}_{J_1}^{a_1} \cap \cdots \cap \mathfrak{m}_{J_s}^{a_s}
  \cap \mathfrak{m}_K^b$$
  is componentwise linear for any positive integers $a_1,\ldots,a_s,b$.
  \end{theorem}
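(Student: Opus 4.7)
The plan is to show that, after a suitable relabeling of the variables, $I$ is weakly polymatroidal; then by Theorem \ref{thm.weaklypoly} the ideal $I$ has linear quotients, and by Theorem \ref{thm.LQimpliesCWL} it is componentwise linear. The crucial combinatorial observation is that the hypothesis $J_i \cup J_j = [n]$ for all $i \neq j$ is equivalent to the assertion that the complements $[n] \setminus J_k$ are pairwise disjoint, so every variable $x_i$ belongs to all but at most one of the $J_k$'s. For each variable $x_i$ let $C(x_i) \subseteq \{J_1,\dots,J_s,K\}$ be the collection of defining constraints of $I$ whose underlying index set contains $i$. First I would relabel the variables so that a total order refining the partial order $x_i \succeq x_j \iff C(x_i) \supseteq C(x_j)$ becomes $x_1 > x_2 > \cdots > x_n$ in lex order; within incomparable classes I would place the variables lying in $K$ before those not in $K$.

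Next I would translate membership in $I$ into the system of inequalities $\sum_{i \in L} c_i \geq c_L$ for each defining constraint $L \in \{J_1,\dots,J_s,K\}$ with corresponding bound $c_L \in \{a_1,\dots,a_s,b\}$, where $m = x_1^{c_1}\cdots x_n^{c_n}$. Given two minimal generators $m_1 = \prod x_i^{a_i}$ and $m_2 = \prod x_i^{b_i}$ with $m_1 >_{\rm lex} m_2$ first differing at index $t$ (so $a_t > b_t$), the task is to produce some $j > t$ with $b_j > 0$ such that $m_2' := x_t(m_2/x_j)$ still lies in $I$. For each constraint $L$, the sum $\sum_{i \in L} c_i$ shifts by $\mathbf{1}_{t \in L} - \mathbf{1}_{j \in L}$ in passing from $m_2$ to $m_2'$, so the only obstruction is the existence of a \emph{dangerous} tight constraint $L$ with $t \notin L$, $j \in L$, and $\sum_{i \in L} b_i = c_L$. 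By the pairwise covering property at most one $J_{k_0}$ can fail to contain $t$, and $K$ may or may not contain $t$, so at most two dangerous constraints ever need to be dealt with.

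The last step is to find $j > t$ with $b_j > 0$ avoiding these dangerous constraints. Here the variable ordering pays off: for any $j$ coming after $t$, the ordering guarantees that $C(x_j)$ is not strictly larger than $C(x_t)$, and the minimality of $m_2$ (each variable dividing $m_2$ must witness some tight defining constraint) supplies the combinatorial flexibility needed to choose such a $j$. The main obstacle is balancing the asymmetric roles of the $J_k$'s, which enjoy the pairwise covering hypothesis, and the constraint $K$, which does not. This is resolved by the $K$-sensitive refinement of the ordering on incomparable classes, after which a case analysis based on whether $t$ lies in every $J_k$ and whether $t \in K$ finishes the verification; the two-constraint bound on dangerous tight constraints keeps the case analysis finite and manageable.
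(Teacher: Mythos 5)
Your overall strategy --- reorder the variables, show $I$ is weakly polymatroidal, and invoke Theorem \ref{thm.weaklypoly} --- is exactly the route taken in the cited source \cite{MM2010} (the survey itself gives no proof, only the citation), and your key observation that the complements $[n]\setminus J_k$ are pairwise disjoint, so that at most two ``dangerous'' tight constraints ever arise, is correct and is indeed the heart of the matter. The gap lies in the ordering you prescribe and in the final step, which is asserted rather than proved. Your tie-breaking rule ``place the variables lying in $K$ before those not in $K$'' fails on a small instance: take $n=3$, $J_1=\{2,3\}$, $J_2=\{1,2\}$, $K=\{1\}$, $a_1=1$, $a_2=2$, $b=1$. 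Then $C(x_1)=\{J_2,K\}$, $C(x_2)=\{J_1,J_2\}$, $C(x_3)=\{J_1\}$, so your rule produces the order $x_1>x_2>x_3$ (the $K$-variable $x_1$ is incomparable to $x_2$ and is placed first). One computes
$$I=\langle x_2,x_3\rangle\cap\langle x_1,x_2\rangle^2\cap\langle x_1\rangle=\langle x_1x_2,\ x_1^2x_3\rangle,$$
and for the pair $m_1=x_1^2x_3>_{\mathrm{lex}}m_2=x_1x_2$ one has $t=1$, the only admissible index is $j=2$, and $x_1(m_2/x_2)=x_1^2\notin I$, because the constraint $J_1=\{2,3\}$ is tight on $m_2$, does not contain $t=1$, and does contain $j=2$. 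So $I$ is not weakly polymatroidal in your order, although it is in the order $x_2>x_1>x_3$.

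What the example reveals is that, among incomparable variables, membership in the $J_k$'s must take precedence over membership in $K$ (here $x_2$, which lies in every $J_k$, must precede $x_1$, which misses $J_1$); your tie-break runs in the wrong direction. More importantly, the sentence ``the minimality of $m_2$ \dots supplies the combinatorial flexibility needed to choose such a $j$'' is precisely the assertion that has to be proved, and the example shows it is false for the order you chose. The hard case is exactly the one your sketch glosses over: $t$ lies outside a unique $J_{k_0}$, $J_{k_0}$ is tight on $m_2$, and every index $j>t$ dividing $m_2$ that is forced on you by tightness at $m_1$ lands inside $J_{k_0}$. Until you fix an order for which this case genuinely cannot occur and write out the corresponding verification, the argument is incomplete; with the corrected order the plan does go through, and is essentially the proof in \cite{MM2010}.
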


We end this section with a recent result of Dung, Hien, Nguyen, and Trung
that allows one to build new componentwise linear ideals from
old ones.  We have only presented the monomial ideal version of this result,
although the work of \cite{DHNT2021} is more general since the
focus of their work is the linear defect of an ideal.

\begin{theorem}[{\cite[Corollary 5.6]{DHNT2021}}]\label{buidcwl}
  Let $R = k[x_1,\ldots,x_n]$.  Let $I'$ and $T$ be non-trivial
  monomial ideals and $x$ a variable such that
  \begin{enumerate}
  \item $I'$ is componentwise linear,
  \item $T \subseteq \langle x_1,\ldots,x_n\rangle I'$, and
  \item no generator of $T$ is divisible by $x$.
  \end{enumerate}
  If $I = xI'+T$, then $I$ is componentwise linear if and only if
  $T$ is componentwise linear.
\end{theorem}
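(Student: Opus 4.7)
The plan is to build a short exact sequence from the decomposition $I = xI' + T$ whose outer terms are controlled by the componentwise linearity of $I'$ and $T$, and then transport componentwise linearity back and forth using regularity inequalities (or, more cleanly, the linear defect machinery of \cite{DHNT2021}).

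First I would establish the graded short exact sequence
\begin{equation*}
0 \longrightarrow T(-1) \longrightarrow I'(-1) \oplus T \longrightarrow I \longrightarrow 0,
\end{equation*}
with the surjection $(a,b) \mapsto xa + b$. Surjectivity is clear from $I = xI' + T$, and the kernel reduces to the identity $T \cap xI' = xT$: this holds because $(T:x) = T$ (no minimal generator of $T$ is divisible by the variable $x$) together with $T \subseteq I'$ (from $T \subseteq \mm I'$) force any element of $T \cap xI'$ to be of the form $xm$ with $m \in T$.

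Restricting to graded pieces and using $\langle I_d\rangle = x\langle I'_{d-1}\rangle + \langle T_d\rangle$, a parallel Mayer--Vietoris argument produces
\begin{equation*}
0 \longrightarrow x(\langle T_d\rangle : x) \longrightarrow x\langle I'_{d-1}\rangle \oplus \langle T_d\rangle \longrightarrow \langle I_d\rangle \longrightarrow 0,
\end{equation*}
where the identification of $x\langle I'_{d-1}\rangle \cap \langle T_d\rangle$ with $x(\langle T_d\rangle : x)$ relies crucially on $T \subseteq \mm I'$ to ensure $(\langle T_d\rangle : x) \subseteq \langle I'_{d-1}\rangle$ (every degree-$d$ element of $T$ factors as a variable times a degree-$(d-1)$ element of $I'$). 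For the implication $(T \text{ CWL}) \Rightarrow (I \text{ CWL})$, since $I'$ and $T$ are componentwise linear one has $\reg x\langle I'_{d-1}\rangle = \reg \langle T_d\rangle = d$; if in addition $\reg (\langle T_d\rangle : x) \le d$, the standard inequality $\reg \langle I_d\rangle \le \max\{\reg(x\langle I'_{d-1}\rangle \oplus \langle T_d\rangle),\ \reg(x(\langle T_d\rangle:x)) - 1\}$ forces $\reg \langle I_d\rangle \le d$, and hence a linear resolution since $\langle I_d\rangle$ is generated in degree $d$. The converse runs the same inequality backwards by induction on $d$ to bound $\reg \langle T_d\rangle \le d$.

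The main obstacle, and the reason this result really belongs to \cite{DHNT2021}, is that $(\langle T_d\rangle : x)$ need not be generated in a single degree (it carries generators both in degree $d-1$, coming from $\langle T_{d-1}\rangle$, and in degree $d$, coming from the degree-$d$ monomials of $T$ not divisible by $x$), so controlling its regularity from the componentwise linearity of $T$ alone is subtle. The linear defect framework of that paper is precisely designed to circumvent this difficulty: the invariant $\operatorname{ld}_R(-)$ vanishes exactly on componentwise linear modules over $R$ and enjoys a clean additivity statement along short exact sequences of the shape above, delivering the equivalence in one stroke without the delicate auxiliary regularity bookkeeping.
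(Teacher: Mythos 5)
The survey does not actually prove this statement --- it is quoted from \cite[Corollary 5.6]{DHNT2021} --- so there is no in-paper argument to compare you against; I can only judge the proposal on its own terms. Your setup is sound: the sequence $0 \to T(-1) \to I'(-1)\oplus T \to I \to 0$ is correct, the identification $xI'\cap T = xT$ does follow from $(T:x)=T$ together with $T\subseteq I'$, and the componentwise analogue with intersection term $x(\langle T_d\rangle : x)$ is also right (using hypothesis (3) one can check that $(\langle T_d\rangle : x) = \langle T_{d-1}\rangle + \langle T_d\rangle$). This is the natural decomposition to try.

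But the argument is not complete, and the gap sits exactly where you flag it. The forward implication hinges on $\reg(\langle T_d\rangle : x)\le d$, i.e.\ $\reg\bigl(\langle T_{d-1}\rangle + \langle T_d\rangle\bigr)\le d$, which you assume rather than prove: componentwise linearity of $T$ gives $\reg\langle T_{d-1}\rangle = d-1$ and $\reg\langle T_d\rangle = d$, but bounding the regularity of their sum requires controlling yet another intersection, so the bookkeeping does not close. The converse is worse: the reversed inequality bounds $\reg\langle T_d\rangle$ by $\max\{\reg(\langle T_d\rangle : x)+1, d\}$, and $(\langle T_d\rangle : x)$ contains $\langle T_d\rangle$ itself, so the proposed ``induction on $d$'' is circular. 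Finally, the appeal to linearity defect is a deferral rather than a repair: $\operatorname{ld}$ is \emph{not} additive along arbitrary short exact sequences --- the usable inequalities require the connecting homomorphisms to vanish on the linear strands (a Betti-splitting-type condition on the sequence), and verifying that condition from hypotheses (2) and (3) is precisely the technical content of \cite{DHNT2021}. As written, your final paragraph amounts to citing the result you set out to prove. To complete the argument you would need either to establish that $I = xI' + T$ is a Betti splitting (zero connecting maps in $\Tor$) and show this interacts correctly with the degree-$d$ components, or to prove directly the missing bound on $\reg\bigl(\langle T_{d-1}\rangle + \langle T_d\rangle\bigr)$ for a componentwise linear $T$.
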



\section{Componentwise linearity of regular powers}

In this section we survey the problem of determining when the
regular powers of an ideal is  componentwise linear.  In particular,
we will focus on the Herzog-Hibi-Ohsugi Conjecture on the behavior
of cover ideals of chordal graphs.

One theme in commutative algebra is to understand how properties
of an ideal are preserved when one takes powers of these ideals.
This theme is encapsulated into the following broad question:

\begin{question}\label{maintheme}
  Let $I$ be an ideal of a ring $T$.  Suppose that the ideal
  $I$ has some property $\mathcal{P}$.  Does $I^s$ also have
  property $\mathcal{P}$ for all integers $s \geq 1$?
  \end{question}
\noindent
The monograph \cite{CHHVT2020} looks at this question
for a number of ideals that arise in either combinatorics or geometry.
Given this theme, it is natural to ask if the property of
being componentwise linear is preserved by taking powers.
The answer turns out to be no in general as shown in the follow examples.

\begin{example}\label{sturmfelsexample}
    Let
    $$I =
\langle x_1x_2x_3,x_1x_2x_6,x_1x_3x_5,x_1x_4x_5,x_1x_4x_6,x_2x_3x_4,
 x_2x_4x_5,x_2x_5x_6,x_3x_4x_6,
x_3x_5x_6 \rangle$$
in $R =k[x_1,\ldots,x_6]$
be Reisner's example as given in Example \ref{reisnerex}.  As already
noted, this ideal has a linear resolution if ${\rm char}(k) \neq 2$.
It was shown by Conca in \cite[Remark 3]{C2000} that
when ${\rm char}(k) \neq 2$, the ideal
$I^2$ does not have a linear resolution.  Since
$I^2 = \langle (I^2)_6 \rangle$, the square of $I$ is not
componentwise linear. Note that in \cite{C2000}, this example was attributed to Terai.

Sturmfels \cite{S2000} gave another example, which
    does not depend upon the characteristic of the ground field.  In particular,
    the ideal
    $$I = \langle
    x_1x_3x_6,x_1x_4x_5,x_2x_3x_4,x_2x_5x_6,x_3x_4x_5,x_3x_4x_6,x_3x_5x_6,x_4x_5x_6
    \rangle$$
has the property that $I$ has a linear resolution in all
characteristics, but $I^2$ does not have a linear resolution.

Conca's paper \cite{C2006} also contains many other examples of
(monomial) ideals with this behavior.
\end{example}

The previous example shows that we will require some extra hypotheses
on $I$ in order to guarantee that $I^s$ is componentwise linear.
Restricting to the case that $I$ has a linear resolution is a natural
starting point, and it turns out that we can say more in this case.

We say that a homogeneous ideal $I$ has {\it linear powers}
(following Bruns, Conca, and Varbaro \cite{BCV2015})
if $I$ has a linear resolution and
$I^s$ has a linear resolution for all $s \geq 2$.
Ideals with linear powers can be classified in terms of their
Rees algebras.  If $I = \langle f_1,\ldots,f_s \rangle$ is generated
by homogeneous elements of degree $d$, the {\it Rees algebra} is
$${\rm Rees}(I) = \bigoplus_{s \in \mathbb{N}} I^s.$$
This ring has a bigraded structure given by ${\rm Rees}(I)_{a,b} = (I^b)_a$
with $(a,b) \in \mathbb{N}^2$, that
is, all the elements of degree $a$ in the $b$-th power of $I$.
We can give ${\rm Rees}(I)$ a graded structure by setting the degree $a$
part of ${\rm Rees}(I)$ to be
$${\rm Rees}(I)_{(a,*)} = \bigoplus_{b \in \mathbb{N}}(I^b)_a.$$
With this grading ${\rm Rees}(I)$ is a graded $R$-module.  We denote
its Castelnuovo-Mumford regularity with respect to this
grading by ${\rm reg}_{(1,0)} {\rm Rees}(I)$.   (There are other possible
$\mathbb{N}$-gradings one can put on ${\rm Rees}(I)$, so one wants
to distinguish the grading used when taking the regularity.) We then have:

\begin{theorem}[{\cite[Theorem 2.5]{BCV2015}}]\label{linearpowers}
  Let $I = \langle f_1,\ldots,f_s \rangle$ be a homogeneous
  ideal generated by forms of the same degree.  Then
  $I$ has linear powers if and only if ${\rm reg}_{(1,0)}{\rm Rees}(I) =0.$
\end{theorem}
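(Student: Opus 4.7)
\textbf{Proof Proposal for Theorem \ref{linearpowers}.}

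The plan is to view ${\rm Rees}(I)$ as a bigraded module over a suitable bigraded polynomial ring, and then exploit a standard identification of the ``$x$-regularity'' of the Rees algebra with the suprema of the regularities of the powers $I^s$ (suitably normalized). Concretely, let $T = k[y_1,\ldots,y_s]$ and give the bigraded ring $S = R\otimes_k T$ the bigrading determined by $\deg x_i = (1,0)$ and $\deg y_j = (d,1)$. The surjection $S \twoheadrightarrow {\rm Rees}(I)$ sending $y_j \mapsto f_j t$ is bihomogeneous, so ${\rm Rees}(I)$ inherits a bigraded $S$-module structure. The regularity ${\rm reg}_{(1,0)} {\rm Rees}(I)$ is then read off as
\[
{\rm reg}_{(1,0)} {\rm Rees}(I) = \max\bigl\{a - i \;:\; \beta^S_{i,(a,b)}({\rm Rees}(I)) \neq 0 \text{ for some } b\bigr\}.
\]

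First, I would establish the key identity
\[
{\rm reg}_{(1,0)}({\rm Rees}(I)) \;=\; \sup_{b\ge 0}\bigl\{\, {\rm reg}_R(I^b) - bd \,\bigr\}.
\]
To see this, compute $\Tor^S_i(k, {\rm Rees}(I))$ by resolving $k$ over $S$ via the Koszul complex on $x_1,\ldots,x_n, y_1,\ldots,y_s$. Because $S$ is free over $R$, one has a bidegree-preserving identification
\[
\Tor^S_i\bigl(k, {\rm Rees}(I)\bigr)_{(a,b)} \;\cong\; \Tor^R_i\bigl(k, I^b\bigr)_a,
\]
using that ${\rm Rees}(I)_{(*,b)} = I^b$ as graded $R$-modules and that the $y_j$-part of the Koszul complex resolves $k$ over $T$ while contributing only in bidegrees $(\ast, \ast)$ compatible with the grading. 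Translating the regularity formula through this identification produces the displayed supremum.

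Second, I would use the formula to finish. Since $I^b$ is generated in degree $bd$, we always have ${\rm reg}_R(I^b) \ge bd$, with equality precisely when $I^b$ has a linear resolution. The $b=0$ summand contributes $0$. Hence the supremum is non-negative, equals $0$ exactly when ${\rm reg}_R(I^b) = bd$ for every $b \ge 1$, and this is the definition of $I$ having linear powers.

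The main obstacle is the first step: proving the identity ${\rm reg}_{(1,0)}({\rm Rees}(I)) = \sup_b\{{\rm reg}_R(I^b) - bd\}$. While the bookkeeping via the Koszul complex is conceptually clean, one needs to track bidegrees carefully through the spectral sequence (or direct tensor computation) that relates $\Tor^S$ to $\Tor^R$, and to verify that the supremum is actually attained (or at least that the inequality goes both ways) for ideals generated in a single degree; this is where the hypothesis that all $f_i$ share the same degree $d$ plays its essential role, since it ensures the $y_j$ all sit in the same $a$-degree and the normalization by $bd$ is consistent.
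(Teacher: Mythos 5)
This theorem is quoted in the survey from \cite[Theorem 2.5]{BCV2015} without proof (the ``if'' direction being attributed to R\"omer), so your proposal can only be measured against the argument in that reference. Your high-level plan --- establish $\mathrm{reg}_{(1,0)}(\mathrm{Rees}(I)) = \sup_{b}\{\mathrm{reg}_R(I^b) - bd\}$ and then note that the right-hand side is $0$ exactly when every power has a linear resolution --- is the right one, and that identity is essentially what the literature proves. The problem is your proof of the identity. The claimed bidegree-preserving isomorphism $\mathrm{Tor}^S_i(k,\mathrm{Rees}(I))_{(a,b)} \cong \mathrm{Tor}^R_i(k,I^b)_a$ is false, and no amount of bidegree bookkeeping will repair it: the variables $y_j$ act \emph{nontrivially} on $\mathrm{Rees}(I)$ (multiplication by $y_j$ is the map $I^b \to I^{b+1}$, $u \mapsto f_j u$), so $\mathrm{Rees}(I)$ is not the direct sum of the $R$-modules $I^b$ as an $S$-module and the $y$-part of the Koszul complex does not simply ``pass through.'' Concretely, for $I = \langle x_1,x_2\rangle \subseteq k[x_1,x_2]$ one has $\mathrm{Rees}(I) = S/\langle x_1y_2 - x_2y_1\rangle$, a hypersurface, so $\Tor^S_0(k,\mathrm{Rees}(I))$ is one-dimensional and concentrated in bidegree $(0,0)$, whereas $\Tor^R_0(k,I^2)_2 \cong k^3$. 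The same example exposes a normalization error: with your convention $\deg y_j = (d,1)$ and $\mathrm{reg}_{(1,0)} = \max\{a-i\}$, the Koszul syzygy sits in bidegree $(2,1)$ and forces $\mathrm{reg}_{(1,0)} \geq 1$ even though $\langle x_1,x_2\rangle$ visibly has linear powers; one must either place $y_j$ in bidegree $(0,1)$ or subtract $bd$ from $a$ in the definition of the $(1,0)$-regularity.

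The identity you want is true, but both inequalities require different mechanisms. For ``$\geq$'': the $y$-degree-$b$ strand of the minimal bigraded free $S$-resolution of $\mathrm{Rees}(I)$ is an exact complex of free $R$-modules resolving $I^b$ (generally non-minimally), since $S(-c,-e)_{(*,b)} \cong R(-c)\otimes_k T_{b-e}$ is $R$-free with generators in degree $c+(b-e)d$; any free resolution bounds regularity from above, giving $\mathrm{reg}_R(I^b)-bd \leq \mathrm{reg}_{(1,0)}\mathrm{Rees}(I)$ and hence R\"omer's direction. For ``$\leq$'' --- the direction actually needed to prove that linear powers force the regularity to vanish --- one inducts on homological degree: if $\mathrm{reg}(I^b)=bd$ for all $b$, the short exact sequence $0 \to P_{(*,b)} \to S_{(*,b)} \to I^b \to 0$ gives $\mathrm{reg}_R(P_{(*,b)}) \leq bd+1$, so every minimal bigraded $S$-generator of the defining ideal $P$ in $y$-degree $b$ (being, in particular, a minimal $R$-generator of $P_{(*,b)}$) has $x$-degree at most $bd+1$; repeating this strand-by-strand for each successive syzygy module yields $t^{(1,0)}_i(\mathrm{Rees}(I)) \leq i$ for all $i$ after normalization. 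This syzygy-by-syzygy control of the $x$-degrees of minimal $S$-generators is the missing idea in your proposal; once the false $\Tor$ identification is withdrawn, nothing in your argument bounds them.
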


\noindent
This result is generalized to the case of $I^sM$ for a module $M$ in
\cite{BCV2021}.   The ``if'' direction was first proved
by R\"omer \cite[Corollary 5.5]{R2001}.

While Example \ref{sturmfelsexample} shows that
  we should not expect arbitrary products of componentwise linear ideals
  to be componentwise linear, Conca, De Negri, and Rossi \cite{CDR2010}
  gave a sufficient condition for this property.

\begin{theorem}[{\cite[Theorem 2.20]{CDR2010}}]
  Suppose that $I$ and $J$ are componentwise linear, and suppose $d$ is
  the smallest degree of a generator of $I$.  If $\dim R/\langle I_d \rangle \leq 1$, then $IJ$ is componentwise linear. In particular, if
  $\dim R/\langle I_d \rangle \leq 1$, then $I^s$ is componentwise
 linear for all $s \geq 1$.
\end{theorem}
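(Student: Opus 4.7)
The plan is to first establish the statement about $IJ$, and then derive the statement about powers $I^s$ by induction on $s$. For the induction, the case $s = 1$ is the hypothesis that $I$ itself is componentwise linear. For $s \geq 2$, the inductive hypothesis yields that $I^{s-1}$ is componentwise linear, and since the dimension hypothesis concerns $I$ alone, the first statement applied with $J := I^{s-1}$ gives that $I^s = I \cdot I^{s-1}$ is componentwise linear.

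For the first statement, by Corollary \ref{regcwl} it suffices to show that $\langle (IJ)_e \rangle$ has a linear resolution for each $0 \leq e \leq \reg(IJ)$. The natural decomposition
\[
\langle (IJ)_e \rangle \;=\; \sum_{\substack{a+b = e \\ a \geq d,\ b \geq 0}} \langle I_a \rangle \cdot \langle J_b \rangle
\]
expresses $\langle (IJ)_e \rangle$ in terms of the pieces $\langle I_a\rangle$ and $\langle J_b\rangle$, each of which has a linear resolution by the componentwise linearity of $I$ and $J$. However, a product of ideals with linear resolutions need not have a linear resolution (as Example \ref{sturmfelsexample} shows for powers), so genuine additional input is required.

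That input should come from the hypothesis $\dim R/\langle I_d\rangle \leq 1$: every associated prime of $\langle I_d\rangle$ has height at least $n-1$, so after a generic linear change of coordinates one may choose a linear form $\ell \in R_1$ such that $\ell$ is a non-zerodivisor on $R / \langle I_d\rangle$ modulo its $\mm$-torsion and such that $\langle I_d\rangle + (\ell)$ is $\mm$-primary. I would induct on the number of variables $n$, using the short exact sequence
\[
0 \longrightarrow \bigl(\langle (IJ)_e \rangle : \ell\bigr)(-1) \xrightarrow{\;\cdot \ell\;} \langle (IJ)_e \rangle \longrightarrow \langle (IJ)_e \rangle / \ell \langle (IJ)_e \rangle \longrightarrow 0
\]
to pass from $R$ to $R / (\ell)$. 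The base case, in which the dimension hypothesis forces the ambient ring to be at most one-dimensional, is handled directly because then $\langle I_d\rangle$ is $\mm$-primary and products involving it reduce to pure powers of the maximal ideal, which have linear resolutions in every degree.

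The main obstacle will be verifying that both the colon $\langle (IJ)_e\rangle : \ell$ and the quotient $\langle (IJ)_e\rangle / \ell \langle (IJ)_e\rangle$ retain componentwise linearity in the appropriate shifted degrees, so that the induced long exact $\Tor$ sequence forces $\beta_{i,\,i+j}(\langle (IJ)_e\rangle) = 0$ for all $j \neq e$ and $i \geq 0$. The dimension hypothesis enters here precisely by ensuring that $\ell$ interacts cleanly with $\langle I_d\rangle$, so that the colon behaves as $\langle I_d\rangle : \ell \approx \langle I_d\rangle$ up to inessential contributions that die in high degree. An attractive alternative would be to reformulate the claim in terms of the Rees algebra of $I$ in the spirit of Theorem \ref{linearpowers}, deducing the family of linear resolutions $\{\langle (IJ)_e\rangle\}_e$ from a single vanishing statement for the bigraded regularity of $\operatorname{Rees}(I) \otimes_R J$; the dimension hypothesis would provide the key geometric input.
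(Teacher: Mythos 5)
This theorem is quoted by the survey from \cite[Theorem 2.20]{CDR2010} without proof, so there is no internal argument to compare against; your proposal has to stand on its own, and as written it does not. The parts you actually carry out are fine: the reduction of the $I^s$ statement to the product statement by induction on $s$ is valid (the dimension hypothesis concerns only $I$, so it persists when $J:=I^{s-1}$), and the decomposition $\langle (IJ)_e\rangle=\sum_{a+b=e}\langle I_a\rangle\langle J_b\rangle$ is correct. But everything after that is a plan rather than a proof, and the plan stops exactly where the content of the theorem lives: you name ``the main obstacle'' --- showing that the colon and quotient in your exact sequence retain linearity --- and then do not address it. The hypothesis $\dim R/\langle I_d\rangle\le 1$ is never actually used in a verified step, only invoked heuristically, and without it nothing forces the sum of the ideals $\langle I_a\rangle\langle J_b\rangle$ (each generated in degree $e$) to have a linear resolution.

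Concretely, the missing ingredient is a regularity bound for products: the proof in \cite{CDR2010} rests on the Conca--Herzog theorem that $\reg(AB)\le\reg(A)+\reg(B)$ whenever $\dim R/A\le 1$. Since $\langle I_a\rangle\supseteq \mm^{a-d}\langle I_d\rangle$ gives $\dim R/\langle I_a\rangle\le 1$ for all $a\ge d$ with $I_a\ne 0$, this bound shows each summand $\langle I_a\rangle\langle J_b\rangle$ is generated in degree $e$ with regularity $\le e$, hence has a linear resolution --- and one must then still argue that the \emph{sum} over $a+b=e$ is componentwise linear, which requires its own lemma. Your generic-hyperplane induction is, in spirit, how that regularity bound is itself proved (via filter-regular elements), so your instincts point in a reasonable direction, but reproducing that argument is the whole theorem, not a detail. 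Two smaller symptoms of the sketchiness: the displayed sequence is not exact as written, since the cokernel of $\cdot\ell\colon(\langle (IJ)_e\rangle:\ell)(-1)\to\langle (IJ)_e\rangle$ is $\langle (IJ)_e\rangle/(\langle (IJ)_e\rangle\cap(\ell))$ rather than $\langle (IJ)_e\rangle/\ell\langle (IJ)_e\rangle$; and your ``base case'' conflates $\dim R\le 1$ (the base of an induction on $n$) with $\dim R/\langle I_d\rangle=0$ (where $\langle I_d\rangle=\mm^d$ and the claim is indeed easy). As it stands the proposal is an accurate restatement of the difficulty, not a proof.
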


Another way to approach Question \ref{maintheme} is to
restrict to families of ideals that are known to be componentwise
linear, and check if the powers of ideals within this family continue to
be componentwise linear.  One such family of ideals is the cover ideals
of graphs.  We recall the relevant definitions and notation.

Let $G = (V,E)$ be a finite simple graph on the vertex set
$V = \{x_1,\ldots,x_n\}$ and edge set $E$, which consists of unordered
pairs of distinct elements of $V$.  By identifying the vertex $x_i \in V$
with the variable in $x_i$ in $R = k[x_1,\ldots,x_n]$, we
can associate to $G$
two square-free monomials ideals, the {\it edge ideal}
$$I(G) = \langle x_ix_j ~|~ \{x_i,x_j\} \in E \rangle,$$
and the {\it cover ideal}
$$J(G) = \bigcap_{\{x_i,x_j\} \in E} \langle x_i,x_j \rangle.$$
The terminology of edge ideal is used to highlight the fact that
the minimal generators of $I(G)$ correspond to the edges of the graph.
For the cover ideal, the minimal generators of $J(G)$ correspond to the
minimal vertex covers of $G$.  A subset $W \subseteq V$ is a {\it vertex
cover} of $G$ if $e \cap W \neq \emptyset$ for all $e \in E$.
Edge and cover ideals give us an algebraic way to study graphs;
for more on these ideals and their properties,
see \cite{HVT2007,MV2012,SF2020,VT2013}.
By
restricting to cover and edge ideals, one can exploit the extra
combinatorial information.

\begin{example}\label{runningexamplepic}
  Consider the graph $G$ on five vertices as given in Figure \ref{graphpicture}.
  For this graph, the edge ideal is
  $$I(G) = \langle x_1x_2,x_1x_3,x_2x_3,x_3x_4,x_3x_5 \rangle,$$
  and the cover ideal is
  \begin{eqnarray*}
    J(G) &=&\langle x_1,x_2 \rangle \cap \langle x_1,x_3 \rangle
    \cap \langle x_2,x_3 \rangle \cap \langle x_3,x_4 \rangle \cap \langle
    x_3,x_5 \rangle =  \langle x_1x_3,x_2x_3,x_1x_2x_4x_5 \rangle.
  \end{eqnarray*}
  Both ideals belong to the ring $R = k[x_1,\ldots,x_5]$.
  Observe that $J(G)$ is our running example.  Also, each generator
  of $J(G)$ corresponds to a vertex cover of $G$.  For example,
  if we look at the generator $x_1x_3$, then every edge of $G$ has either
  $x_1$ or $x_3$ as an endpoint.
\begin{figure}[!ht]
    \centering
    \begin{tikzpicture}[scale=0.45]
      \draw (0,0) -- (5,0);
      \draw (0,0) -- (2.5,4);
      \draw (2.5,4) -- (5,0);
      \draw (5,0) -- (7.5,4);
      \draw (5,0) -- (10,0);

      \fill[fill=white,draw=black] (0,0) circle (.1) node[below]{$x_1$};
      \fill[fill=white,draw=black] (5,0) circle (.1) node[below]{$x_3$};
      \fill[fill=white,draw=black] (10,0) circle (.1) node[below]{$x_4$};
      \fill[fill=white,draw=black] (7.5,4) circle (.1) node[above]{$x_5$};
      \fill[fill=white,draw=black] (2.5,4) circle (.1) node[above]{$x_2$};
     \end{tikzpicture}
    \caption{The graph $G$}
    \label{graphpicture}
 \end{figure}
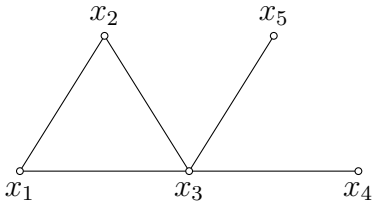
\end{example}

It turns out that the cover ideals of chordal graphs provide
a large family of componentwise ideals. Given a graph
$G = (V,E)$, the {\it induced subgraph} on the set $W \subseteq V$
is the graph $G_W = (W,E_W)$ where $E_W = \{e \in E ~|~ e \subseteq W\}$,
that is, an edge $e$ of $G$ also belongs to $G_W$ if and only if
both endpoints of $e$ belong to $W$.  A {\it cycle of length $n$}
is the graph
$$C_n = (\{x_1,\ldots,x_n\},\{\{x_1,x_2\},\{x_2,x_3\},\ldots,\{x_{n-1},x_n\},
\{x_n,x_1\}\}).$$
A graph $G$ is a {\it chordal graph} if $G$ has no induced
subgraphs isomorphic to a $C_n$ with $n \geq 4$.  The graph
in Example \ref{runningexamplepic} is an example of chordal graph.

The following result is due to Francisco and Van Tuyl:

\begin{theorem}[{\cite[Theorem 3.2]{FVT2007}}]\label{chordaliscwl}
  If $G$ is a chordal graph, then the cover ideal $J(G)$
  is componentwise linear.
\end{theorem}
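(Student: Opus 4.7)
The plan is to translate the problem via Alexander duality and then establish a combinatorial property of the independence complex of $G$. Let $\operatorname{Ind}(G)$ denote the simplicial complex whose faces are the independent sets of $G$; then $I(G) = I_{\operatorname{Ind}(G)}$ (its minimal nonfaces are the edges), and since $J(G)$ is the Alexander dual of $I(G)$, we have $\Delta_{J(G)}^{\vee} = \operatorname{Ind}(G)$. By Theorem~\ref{scmstatement}, $J(G)$ is componentwise linear if and only if $\operatorname{Ind}(G)$ is sequentially Cohen-Macaulay, and by Theorem~\ref{vd=>shellable=>scm} it is therefore enough to prove that $\operatorname{Ind}(G)$ is vertex decomposable whenever $G$ is chordal.

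I proceed by induction on $|V(G)|$, the base case of an edgeless graph being immediate since its independence complex is a simplex. For the inductive step I invoke Dirac's classical theorem: every chordal graph has a \emph{simplicial vertex}, i.e., a vertex $x$ whose open neighborhood $N(x)$ induces a clique. Pick any $y \in N(x)$; the claim is that $y$ is a shedding vertex of $\operatorname{Ind}(G)$. The crucial observation is that simpliciality of $x$ together with $y \in N(x)$ yields the dominance relation $N[x] \subseteq N[y]$, since every element of $N(x)$ is either $y$ itself or adjacent to $y$.

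To verify the shedding condition, let $F$ be a facet of $\del_{\operatorname{Ind}(G)}(y) = \operatorname{Ind}(G \setminus y)$; I must show $F \cup \{y\}$ is not a face of $\operatorname{Ind}(G)$. If $x \in F$, then $F \cup \{y\}$ contains the edge $\{x,y\}$ and is not independent. If $x \notin F$, suppose for contradiction that $F \cup \{y\}$ were independent. Then no vertex of $F$ would lie in $N[y]$, so by the dominance relation none would lie in $N[x]$, making $F \cup \{x\}$ an independent set of $G \setminus y$ strictly larger than $F$ — contradicting maximality.

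Finally, $G \setminus y$ and $G \setminus N[y]$ are induced subgraphs of the chordal graph $G$, hence themselves chordal, so the inductive hypothesis applies to $\del_{\operatorname{Ind}(G)}(y) = \operatorname{Ind}(G \setminus y)$ and $\link_{\operatorname{Ind}(G)}(y) = \operatorname{Ind}(G \setminus N[y])$, establishing vertex decomposability. The main subtlety is identifying the correct shedding vertex: not the simplicial vertex $x$ itself, but one of its neighbors, so that the dominance $N[x] \subseteq N[y]$ is available to drive the contradiction. Once this combinatorial insight is in hand, the rest of the argument is a routine induction built on the Alexander-duality reduction.
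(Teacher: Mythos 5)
Your argument is correct, but it is not the proof that the cited source \cite{FVT2007} actually gives for Theorem \ref{chordaliscwl}. Francisco and Van Tuyl argue through Theorem \ref{sqfreecwl}: they show $J(G)$ is square-free componentwise linear by proving, degree by degree, that each square-free component $J(G)_{[d]}$ has a linear resolution (via the Eagon--Reiner criterion, Theorem \ref{eagonreiner}); vertex decomposability plays no role there. What you have written is essentially the later, stronger argument of Dochtermann--Engstr\"om \cite{DE2009} and Woodroofe \cite{W2009}, listed in this survey as alternative proof (3): one shows ${\rm Ind}(G)=\Delta_{J(G)}^{\vee}$ is vertex decomposable and then applies Theorems \ref{vd=>shellable=>scm} and \ref{scmstatement}. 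Your key step --- take a simplicial vertex $x$ via Dirac's theorem and shed a neighbour $y$ of $x$, using the domination $N[x]\subseteq N[y]$ to show that every facet of ${\rm Ind}(G\setminus y)$ is a facet of ${\rm Ind}(G)$ --- is exactly Woodroofe's lemma, and your verification of the shedding condition and of the identifications $\del_{{\rm Ind}(G)}(y)={\rm Ind}(G\setminus y)$ and $\link_{{\rm Ind}(G)}(y)={\rm Ind}(G\setminus N[y])$ is sound; since induced subgraphs of chordal graphs are chordal, the induction closes. What your route buys is a characteristic-free, purely combinatorial conclusion strictly stronger than sequential Cohen--Macaulayness (vertex decomposability, which also passes to all induced subgraphs), at essentially no extra cost beyond Dirac's theorem; the original proof stays closer to the resolution-theoretic definition. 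One small detail to patch: in the inductive step you need a simplicial vertex with $N(x)\neq\emptyset$, and Dirac's theorem could hand you an isolated vertex. Apply it instead to the induced subgraph on the non-isolated vertices of $G$ (chordal, and nonempty once $G$ has an edge); a simplicial vertex there is a non-isolated simplicial vertex of $G$, and the edgeless case is your base case.
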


Note that the statement in \cite{FVT2007} uses the equivalence of
Theorem \ref{scmstatement}, that is, the conclusion is that
$\Delta^\vee_{J(G)}$ is a sequentially Cohen-Macaulay simplicial complex.
To prove Theorem \ref{chordaliscwl},
Francisco and Van Tuyl use Theorem \ref{sqfreecwl} to
show that $J(G)$ is square-free componentwise linear.
Other proofs of Theorem \ref{chordaliscwl} exist making
use of Theorem \ref{vd=>shellable=>scm}: (1)
Van Tuyl and Villarreal \cite{VTV2008}
showed that $\Delta_{J(G)}^\vee$ is shellable, from which we can deduce that
$J(G)$ is componentwise linear;
(2) Erey \cite{E2019a} gave an alternative ordering of the generators
of $J(G)$ to prove that $J(G)$ has
the linear quotients; and (3) Dochtermann and Engstr\"om \cite{DE2009},
and independently
Woodroofe \cite{W2009}, proved that
$\Delta_{J(G)}^\vee$ was vertex decomposable when
$G$ is chordal, which implies $\Delta_{J(G)}^\vee$ is shellable.

Herzog, Hibi, and Ohsugi \cite{HHO2011} were the first to
consider the problem of when powers of componentwise linear ideals are
also componentwise linear.   Using Theorem \ref{chordaliscwl} as their
starting point, they proposed the following conjecture
\cite[Conjecture 2.5]{HHO2011}.

\begin{conjecture}[The Herzog-Hibi-Ohsugi Conjecture]\label{hhoconjecture}
  If $G$ is a chordal graph with cover ideal $J(G)$,
  then $J(G)^s$ is componentwise linear for all $s \geq 1$.
\end{conjecture}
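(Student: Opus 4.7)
The plan is to proceed by induction on the number of vertices $n$ of the chordal graph $G$, exploiting the classical fact that every chordal graph admits a \emph{simplicial vertex}, i.e., a vertex $x$ whose neighbors $N(x) = \{y_1, \ldots, y_k\}$ form a clique. Removing $x$ yields a chordal graph $G' = G \setminus x$, and removing $N[x] = \{x\} \cup N(x)$ yields another chordal graph $G'' = G \setminus N[x]$. By the inductive hypothesis, $J(G')^s$ and $J(G'')^s$ are componentwise linear for every $s \geq 1$. The base case is the complete graph $K_n$, whose cover ideal is the squarefree Veronese ideal of degree $n-1$, and whose powers are well known to have linear resolutions.

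The next step is to decompose the cover ideal along $x$. Since $x$ is simplicial, every minimal vertex cover of $G$ either contains $x$ (and restricts to a vertex cover of $G'$) or omits $x$ (in which case all of $N(x)$ must be in the cover, and the remainder is a vertex cover of $G''$). This yields the identity
\[ J(G) = x \cdot J(G') + y_1 \cdots y_k \cdot J(G''), \]
from which the multinomial expansion gives
\[ J(G)^s = \sum_{i=0}^{s} x^i (y_1 \cdots y_k)^{s-i} J(G')^i J(G'')^{s-i}. \]
Separating the $x$-divisible terms from the $x$-free terms, one rewrites $J(G)^s = x \cdot I' + T$ with $I' = J(G') \cdot J(G)^{s-1}$ and $T = (y_1 \cdots y_k)^s J(G'')^s$.

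From here I see two complementary routes. The first is to apply Theorem \ref{buidcwl} with the decomposition above: verify that no generator of $T$ is divisible by $x$ (immediate, since $T$ involves only variables of $N[x] \setminus \{x\}$ and of $G''$) and that $T \subseteq \mm I'$ (which reduces, by the minimal vertex cover argument, to checking that $y_1 \cdots y_k \cdot J(G'')$ is contained in $\mm J(G')$). This reduces componentwise linearity of $J(G)^s$ to that of $T$, which is a monomial shift of $J(G'')^s$ and is handled directly by the inductive hypothesis. The second route is to polarize and apply linear quotients: by Theorems \ref{linquot-polar} and \ref{thm.LQimpliesCWL}, it suffices to order the generators of $(J(G)^s)^{\rm pol}$ first by decreasing $x$-exponent and then lexicographically according to the orderings furnished inductively for $G'$ and $G''$, and to verify that each successive colon ideal is generated by linear forms.

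The main obstacle --- and the reason this conjecture has stood open for more than a decade --- is controlling the mixed products $J(G')^i J(G'')^{s-i}$ that appear in the expansion above. The inductive hypothesis gives componentwise linearity of each $J(G')^i$ and $J(G'')^{s-i}$ individually, but products of componentwise linear ideals generally fail to be componentwise linear (see Example \ref{sturmfelsexample}, where the square of a linear-resolution ideal ceases to be componentwise linear). Any successful argument must exploit the specific combinatorial coupling between $G'$ and $G''$ through their shared boundary clique $N(x)$ --- for example, the fact that both inherit perfect elimination orderings compatible with that of $G$, or the existence of a clique tree in which $N(x)$ appears as a separator. A fallback strategy, modeled on Kumar and Kumar's proof for trees \cite{KK2020}, would be to first isolate subclasses of chordal graphs for which $J(G)^s = J(G)^{(s)}$ holds and for which the symbolic powers are componentwise linear via vertex decomposability (Theorem \ref{vd=>shellable=>scm}), then bridge to the general case by a systematic analysis of the embedded components separating $J(G)^s$ from $J(G)^{(s)}$.
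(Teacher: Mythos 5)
The statement you are asked about is a \emph{conjecture}: the paper contains no proof of it, and it remains open. The survey only records partial results for special families of chordal graphs (star graphs, $(C_4,2K_2)$-free chordal graphs, paths, trees, etc.), each obtained by one of the strategies you mention (linear quotients, weak polymatroidality, Rees-algebra regularity, or passage to symbolic powers and polarization). So there is no proof in the paper to compare yours against, and your proposal, by your own admission in the final paragraph, is a strategy sketch rather than a proof. The gap you acknowledge is the entire difficulty, and it surfaces immediately in your own setup: to apply Theorem \ref{buidcwl} to $J(G)^s = x\cdot I' + T$ with $I' = J(G')\,J(G)^{s-1}$, hypothesis (1) requires $I'$ to be componentwise linear, but $I'$ is a mixed product involving $J(G)$ itself (a cover ideal on all $n$ vertices), so it is not covered by induction on the number of vertices, and products of componentwise linear ideals need not be componentwise linear.

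Moreover, hypothesis (2) of Theorem \ref{buidcwl}, namely $T \subseteq \langle x_1,\ldots,x_n\rangle I'$, is not ``immediate'' and can actually fail. Take $G = P_3$ with vertices $x,y,z$, edges $\{x,y\},\{y,z\}$, and simplicial vertex $x$, so $G' = G\setminus x$ is the edge $\{y,z\}$ and $N(x)=\{y\}$. For $s=1$ one has $T = y\cdot J(G'') = \langle y\rangle$ while $\mm I' = \mm\langle y,z\rangle$ is generated in degree $2$, so $T \not\subseteq \mm I'$; the same failure persists for $s\ge 2$, where $y^s$ is a minimal generator of $I'$ and hence not in $\mm I'$. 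So the first route breaks on the simplest nontrivial tree. The second route (ordering the generators of $(J(G)^s)^{\rm pol}$ and checking linear quotients) is stated as a plan with no colon-ideal computation; carrying it out is precisely what Erey, Kumar--Kumar, and others have managed only for restricted families. In short: the decomposition $J(G) = x\,J(G') + y_1\cdots y_k\,J(G'')$ at a simplicial vertex is a reasonable and standard starting point, but neither of your two routes closes the argument, and the conjecture remains open.
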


Part of the difficulty of this conjecture lies in the fact that
$J(G)^s$ is no longer a square-free monomial ideal if $s \geq 2$.  The
majority of the proofs for Theorem \ref{chordaliscwl} as described above
rely heavily on the fact that $J(G)$ is a square-free monomial ideal.  Answering
the Herzog-Hibi-Ohsugi Conjecture will rely on new techniques for
proving a monomial ideal is componentwise linear.

There is a growing body of families of chordal graphs that
satisfy Conjecture \ref{hhoconjecture}, thus pointing towards the validity
of the conjecture.   The proof strategies broadly
fall into two categories.  One strategy is to find
a description or ordering of the generators of $J(G)^s$,
and then show that the ideal has linear quotients
to apply Theorem \ref{thm.LQimpliesCWL}, or $J(G)^s$
satisfies a property like being weakly polymatroidal and apply a result like
Theorem \ref{thm.weaklypoly}.  The second strategy is to use
properties of the Rees algebra,
as in Theorem \ref{linearpowers}.  Current attacks on the conjecture
have also exploited the structure of chordal graphs.  For example,
some approaches exploit chordal graphs with ``lots'' of edges, e.g.,
a graph with a large complete graph (defined below) as a subgraph.  At
the other extreme, the conjecture has been investigated for
chordal graphs with ``few'' edges,
e.g., trees or graphs with very rigid structure.

Herzog, Hibi, and Ohsugi provided the initial
evidence for the validity of Conjecture \ref{hhoconjecture}.
Under the extra assumption that $J(G)$ has a linear resolution
(which is equivalent to the fact that $\Delta_{J(G)}^\vee$ is a Cohen-Macaulay
simplicial complex by Theorem \ref{eagonreiner}), they show
that ${\rm reg}_{(1,0)}R(J(G)) = 0$, and use the approach
of Theorem \ref{linearpowers} to verify Conjecture \ref{hhoconjecture}
for all cover ideals of chordal graphs with a linear resolution.

\begin{theorem}[{\cite[Theorem 2.7]{HHO2011}}]\label{chordallinear}
  If $G$ is
  a chordal graph such that cover ideal $J(G)$ has a linear resolution,
  then $J(G)^s$ is componentwise linear for all $s \geq 1$.
\end{theorem}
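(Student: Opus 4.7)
The plan is to reduce the statement to Theorem \ref{linearpowers}. Since $J(G)$ is assumed to have a linear resolution, all of its minimal generators share a common degree $d$, so the hypothesis of Theorem \ref{linearpowers} is met. It therefore suffices to establish the single equality
$$\operatorname{reg}_{(1,0)} \operatorname{Rees}(J(G)) = 0.$$
Once this vanishing is in hand, Theorem \ref{linearpowers} yields that $J(G)^s$ has a linear resolution for every $s \geq 1$, and Corollary \ref{regcwl} upgrades ``linear resolution'' to ``componentwise linear,'' which is the desired conclusion.

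First I would translate the linear-resolution hypothesis combinatorially. By Theorem \ref{eagonreiner}, $J(G)$ has a linear resolution if and only if $\Delta^\vee_{J(G)}$ is Cohen--Macaulay; equivalently, $R/I(G)$ is Cohen--Macaulay, since $I(G)$ and $J(G)$ are Alexander duals. In particular all minimal vertex covers of $G$ have the same cardinality $d$. Combined with chordality, this forces the tight combinatorial structure of a Cohen--Macaulay chordal graph (the Herzog--Hibi--Zheng style decomposition into disjoint simplicial cliques), which gives an explicit handle on the minimal generators of $J(G)$ and on the toric relations among them.

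Next I would use this structure to control the Rees algebra. The idea is to present $\operatorname{Rees}(J(G))$ as the quotient of a bigraded polynomial ring by the toric ideal associated to the generators of $J(G)$, and to argue that this presentation has no nonzero Betti numbers off the ``$(1,0)$-linear'' strand. The cleanest route is to exhibit a quadratic Gr\"obner basis (or to verify a Koszul-type property) for the toric ideal, exploiting the simplicial clique decomposition obtained in the previous step; one can then conclude the vanishing $\operatorname{reg}_{(1,0)} \operatorname{Rees}(J(G)) = 0$ from standard bounds on regularity in terms of a bigraded free presentation.

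The main obstacle is precisely this middle step: translating the combinatorial Cohen--Macaulay chordal structure into a statement about the defining equations of $\operatorname{Rees}(J(G))$ sharp enough to force the $(1,0)$-regularity to vanish. Chordality alone is not enough (Example \ref{sturmfelsexample} already shows that closely related monomial ideals can fail to have linear powers), so the argument must genuinely use that \emph{all} minimal vertex covers of $G$ have the same size $d$; without that extra rigidity, no control of the Rees equations past the first syzygy seems possible by elementary means.
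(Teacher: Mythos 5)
Your overall strategy is exactly the one the paper attributes to Herzog, Hibi, and Ohsugi: use Theorem \ref{eagonreiner} to translate the hypothesis into the Cohen--Macaulayness of $\Delta^\vee_{J(G)} = {\rm Ind}(G)$ (equivalently, of $R/I(G)$, so that in particular all minimal vertex covers have the same size $d$), and then reduce the whole theorem to the single vanishing ${\rm reg}_{(1,0)}{\rm Rees}(J(G)) = 0$ so that Theorem \ref{linearpowers} applies; Corollary \ref{regcwl} then converts ``linear powers'' into ``componentwise linear powers.'' That framing, including your remark that unmixedness is the extra rigidity beyond chordality that Example \ref{sturmfelsexample} shows is needed, is correct and is precisely the route the paper describes.

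The difficulty is that the proposal stops exactly where the proof begins: the entire mathematical content of the theorem is the vanishing ${\rm reg}_{(1,0)}{\rm Rees}(J(G)) = 0$, and you explicitly leave it unestablished. Moreover, the mechanism you suggest for it --- a \emph{quadratic} Gr\"obner basis for the defining ideal of the Rees algebra, or a Koszul-type property --- is not quite the right condition. Quadraticity of the Gr\"obner basis is what one wants for Koszulness of ${\rm Rees}(J(G))$, whereas the criterion that actually forces ${\rm reg}_{(1,0)}{\rm Rees}(J(G)) = 0$ (and the one Herzog--Hibi--Ohsugi verify, in the spirit of Theorem \ref{quadraticrees}) is the so-called $x$-condition: with respect to a suitable monomial order, the defining ideal of ${\rm Rees}(J(G))$ has a Gr\"obner basis each of whose elements has degree at most one in the ring variables $x_1,\dots,x_n$, with no constraint on the total degree. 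It is this bound on the $x$-degree of the initial ideal, not on the total degree, that controls the $(1,0)$-regularity. Verifying it requires the Herzog--Hibi--Zheng structure theorem for unmixed chordal graphs together with an explicit analysis of the binomial relations among the generators of $J(G)$, none of which appears in the proposal. As written, then, you have the correct reduction plus an unproven claim, rather than a proof.
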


We need to introduce some
special classes of chordal graphs.
The {\it complete graph} $K_n$ on $n$ vertices is the graph with
vertex set $\{x_1,\ldots,x_n\}$ and edge set $\{\{x_i,x_j\} ~|~ 1 \leq
i < j \leq n\}$.  We call a graph $G$ on $n+m$ vertices
a {\it star graph} based
on $K_n$ if the vertices of $G$ can be relabeled so that
the induced graph on $\{x_1,\ldots,x_n\}$ is the complete
graph $K_n$, and for any $n \leq i < j \leq n+m$, the
edge $\{x_i,x_j\} \not\in E$.  The graph in Figure \ref{runningexamplepic}
  is an example of a star graph based on $K_3$ since the induced
  graph on $\{x_1,x_2,x_3\}$ is a $K_3$.

  Mohammadi \cite{M2011} introduced a wider class of ideals that
  generalized this construction that
  were called {\it generalized star graphs}.  While we will not
  recall this construction, the idea is similar in that one
  glues together a collection of complete graphs in a prescribed way
  to form a ``core'', and then one is allowed to attach some extra
  edges.  We then have the following result.

  \begin{theorem}\label{stargraphs}
    Conjecture \ref{hhoconjecture} is true
    for the cover ideals of the following chordal graphs:
  \begin{enumerate}
  \item \cite[Theorem 2.3]{HHO2011} Star graphs based on $K_n$.
  \item \cite[Theorem 1.5]{M2011} Generalized star graphs.
  \end{enumerate}
  \end{theorem}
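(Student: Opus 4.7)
The plan is to treat both parts with the same core strategy that Herzog--Hibi--Ohsugi introduced: give an explicit description of a minimal generating set of $J(G)^s$ and verify linear quotients (invoking Theorem \ref{thm.LQimpliesCWL}), or equivalently check a weakly polymatroidal exchange so that Theorem \ref{thm.weaklypoly} applies. This bypasses Theorem \ref{scmstatement}, which is unavailable once we pass to $s\ge 2$ and lose squarefreeness.

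For part (1), let $G$ be a star graph based on $K_n$ with core $\{x_1,\dots,x_n\}$ and extra vertices $x_{n+1},\dots,x_{n+m}$, each joined only to vertices of the core. The first step is to list the minimal vertex covers: because the core is a clique, any cover contains at least $n-1$ of $x_1,\dots,x_n$, so the minimal covers are exactly
\[
W_0=\{x_1,\dots,x_n\}\quad\text{and}\quad W_k=\{x_1,\dots,\widehat{x_k},\dots,x_n\}\cup\{x_{n+j}\mid x_k\in N(x_{n+j})\},
\]
for each $k\in\{1,\dots,n\}$ (with the obvious minor adjustment when some $W_k\subseteq W_0$). This gives a small, structured generating set of $J(G)$, and every minimal generator of $J(G)^s$ is an $s$-fold product of these $W_k$-monomials. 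The second step is to impose a total order on these $s$-fold products, most naturally the lexicographic order with $x_1>x_2>\cdots>x_{n+m}$, and to verify that for consecutive generators $u$ the colon ideal $\langle u_1,\dots,u_{i-1}\rangle:\langle u_i\rangle$ is generated by linear forms. Because each $W_k$ has a transparent combinatorial meaning (``drop $x_k$ from the core and pick up its pendant neighbors''), the verification reduces to exhibiting, for any two $s$-fold products that differ, a single $x_\ell$ witnessing the linear quotient. Alternatively, one checks directly the weakly polymatroidal exchange: given $m_1>_{\mathrm{lex}} m_2$ agreeing in the first $t-1$ variable exponents with $a_t>b_t$, swap one factor $W_k$ in the decomposition of $m_2$ for a factor that raises the $x_t$-exponent, which is possible by the clique structure of the core.

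For part (2), a generalized star graph is obtained by gluing several complete graphs together along a shared vertex set and attaching some pendant-like edges. The plan is to proceed by induction on the number of complete-graph components in the core. The base case reduces to part (1) (or to the complete graph itself, where $J(K_n)^s$ is a power of a squarefree Veronese and is weakly polymatroidal by Theorem \ref{thm.weaklypoly}). For the inductive step, identify a ``peripheral'' clique in the core, remove it to obtain a smaller generalized star graph $G'$, and express the generators of $J(G)^s$ as products of generators of $J(G')^s$ with the $W_k$-type generators contributed by the removed clique. The inductive hypothesis handles the $J(G')^s$ factors, and the same ordering or exchange argument from part (1) handles the additional factors; alternatively, Theorem \ref{buidcwl} can be invoked when the removed clique contributes a single separating variable.

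The main obstacle is the combinatorial control needed in part (2). For a single star the generators of $J(G)^s$ are indexed in an essentially ``local'' way by the core clique, but after gluing, a single generator of $J(G)^s$ records a compatible choice of $W_k$-factors across several cliques that share vertices. Ensuring that the lexicographic ordering on these $s$-tuples continues to produce linear colon ideals, uniformly in $s$, is where the argument becomes delicate: one must choose an ordering of the variables (and of the cliques in the core) compatible with the gluing, and then verify that the exchange of a single factor at the shared vertices can always be realized inside the ambient ideal. Handling this interaction cleanly is the technical crux.
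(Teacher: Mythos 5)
Your proposal latches onto the right strategy for one of the two known proofs: Mohammadi's arguments show that $J(G)^s$ is weakly polymatroidal and then invoke Theorem \ref{thm.weaklypoly}, and this is also her alternative proof of part (1). Note, however, that this survey records that Herzog, Hibi, and Ohsugi's own proof of part (1) is different in kind: it goes through the Rees algebra as in Theorem \ref{linearpowers}, by showing ${\rm reg}_{(1,0)}{\rm Rees}(J(G))=0$ for star graphs based on $K_n$. Your description of the minimal vertex covers of a star graph (all of the core except possibly one vertex $x_k$, together with the pendant neighbours of $x_k$) is essentially correct.

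The difficulty is that what you have written is a plan rather than a proof, and the gap sits exactly where you yourself place it. Three concrete points. First, for $s\ge 2$ the minimal generators of $J(G)^s$ are not simply all $s$-fold products of the $W_k$-monomials: some products are redundant, and the weakly polymatroidal condition is a statement about \emph{minimal} generators, so one must first determine which products survive before any exchange argument can be run. Second, the exchange step itself --- given minimal generators $m_1>_{\mathrm{lex}}m_2$ agreeing in the first $t-1$ exponents with $a_t>b_t$, produce $j>t$ with $x_t(m_2/x_j)\in J(G)^s$ --- is the entire content of the proof; asserting that the swap ``is possible by the clique structure of the core'' does not verify it, and the case where $x_t$ is a pendant (non-core) vertex is not addressed by that remark at all. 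Third, part (2) is left open: the induction on peripheral cliques is plausible but unexecuted, and the fallback appeal to Theorem \ref{buidcwl} would require exhibiting a decomposition $J(G)^s = xI' + T$ with $T\subseteq\langle x_1,\ldots,x_n\rangle I'$ and no generator of $T$ divisible by $x$, which you have not produced. Since you explicitly flag the interaction of factors at shared vertices as ``the technical crux'' and do not resolve it, the proposal as written does not establish the theorem.
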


  \begin{example}
    For any complete graph $K_n$, the cover ideal $J(K_n)$ has a
    linear resolution.  This can be checked directly from the
    fact that $J(K_n) = \langle x_1\cdots \widehat{x_i} \cdots x_n ~|~
    1 \leq i \leq n \rangle$, and that this ideal has linear quotients.
    So all powers of $J(K_n)$ are componentwise linear.

    As a second example, the graph in Figure \ref{runningexamplepic} is a
    star graph based on $K_3$.  Consequently, the ideal of
    our running example, that is,
    $J(G)  = \langle x_1x_3,x_2x_3,x_1x_2x_4x_5 \rangle$
    has the property that all of its powers has the componentwise linear
    property.
  \end{example}

  Herzog, Hibi, Ohsugi's proof of Theorem \ref{stargraphs}
  uses properties of the Rees algebra $R(J(G))$.  Mohammadi
  gives a different proof for star graphs based on $K_n$ that shows
  for each graph $G$ in this family, the ideal
  $J(G)^s$ is weakly polymatroidal.    In fact, the following
  theorem gives a combinatorial way to check if all powers
  of $J(G)$ are componentwise linear.   Given a graph $G = (V,E)$,
  the {\it clique complex} of $G$ is the simplicial complex
  $${\rm Cliq}(G) = \langle W \subseteq V ~|~ \mbox{$G_W = K_{|W|}$} \rangle,$$
  in other words, the clique complex consists of all the subsets
  of $V$ such that induced graph on that subset is a complete graph.
  For any simplicial complex $\Delta$, a facet $F \in \Delta$ has
  a {\it free vertex} if there is some vertex $x_i \in F$ that only appears
  in $F$, but no other facet.  We denote the set of all facets of $\Delta$ with
  a free vertex by $\mathcal{FV}(\Delta)$.  We then have the
  following tool:

  \begin{theorem}[{\cite[Corollary 2.4]{M2014}}]\label{combinatorialresult}
    Let $G = (V,E)$ be a chordal graph
    with clique complex ${\rm Cliq}(G)$.
    If $|V|-1 \leq | \bigcup_{F\in \mathcal{FV}({\rm Cliq}(G))} F |$,
    then the ideal $J(G)^s$ is componentwise linear for all $s \geq 1$.
    \end{theorem}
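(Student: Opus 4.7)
My plan is to verify the weakly polymatroidal property for $J(G)^s$ under a suitable ordering of the variables; by Theorem \ref{thm.weaklypoly} this will give linear quotients and hence componentwise linearity. The hypothesis $|V|-1 \leq |\bigcup_{F \in \mathcal{FV}({\rm Cliq}(G))} F|$ ensures that nearly every vertex of $G$ is contained in some maximal clique possessing a free vertex. Such free vertices are simplicial in $G$, and they will serve as the ``exchange pivots'' needed for the polymatroidal condition.

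I would begin by choosing a vertex order $x_1 > x_2 > \cdots > x_n$ adapted to the clique structure. Setting $U = \bigcup_{F \in \mathcal{FV}({\rm Cliq}(G))} F$, place the at-most-one vertex of $V \setminus U$ at the end as $x_n$ (picked arbitrarily when $V = U$), and arrange the remaining variables compatibly with a perfect elimination ordering of $G \setminus \{x_n\}$, further refined so that within each facet $F \in \mathcal{FV}({\rm Cliq}(G))$ a designated free vertex of $F$ receives the largest index in $F$. Recall that in a chordal graph every minimal vertex cover omits exactly one vertex from each maximal clique, so minimal generators of $J(G)^s$ are products $W_1 \cdots W_s$ of $s$ such covers.

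For two generators $m_1 >_{{\rm lex}} m_2$ agreeing in the first $t-1$ exponents and with $a_t > b_t$ (necessarily $t < n$, else $m_2 \mid m_1$ contradicts minimality), the required exchange must produce a generator $x_t(m_2/x_j) \in J(G)^s$ with $j > t$. The idea is to locate a factor $W_i'$ of $m_2$ that omits $x_t$ from a maximal clique $F \ni x_t$ possessing a free vertex $x_j$---available because $x_t \in U$---with $j > t$ by our ordering. Swapping ``omit $x_t$'' for ``omit $x_j$'' in $W_i'$ produces a new minimal vertex cover $W_i'' := (W_i' \cup \{x_t\}) \setminus \{x_j\}$: since $x_j$ lies in no maximal clique other than $F$, removing $x_j$ only affects edges internal to $F$, all of which are restored by reinserting $x_t$. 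Substituting $W_i''$ for $W_i'$ in the factorization of $m_2$ gives the desired generator $x_t(m_2/x_j) \in J(G)^s$.

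The main obstacle arises when $x_t$ is itself a designated free vertex of its unique maximal clique, as then the ordering places $x_t$ at the largest index within $F$ and no $x_j \in F$ with $j > t$ is available for the swap. Handling this case requires first refactoring $m_2$: because $x_t$ is simplicial, the choice of which factors of $m_2$ omit $x_t$ from $F$ is unconstrained by any other clique, so one may rearrange the factorization to shift the defect onto a non-simplicial vertex of $F$ where the previous exchange argument applies directly. An induction on the lex distance between $m_1$ and the modified factorization of $m_2$ carries this through, establishing that $J(G)^s$ is weakly polymatroidal; Theorem \ref{thm.weaklypoly} then delivers componentwise linearity for all $s \geq 1$.
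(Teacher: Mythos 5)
Your overall strategy --- prove that $J(G)^s$ is weakly polymatroidal for a suitable variable order and invoke Theorem \ref{thm.weaklypoly} --- is exactly the route indicated for the cited source \cite{M2014} (the survey itself only states the result with a citation), so the plan is aligned with the original proof. However, two points in your execution do not hold up. First, the claim that in a chordal graph every minimal vertex cover omits exactly one vertex from each maximal clique is false: for the path $a-b-c-d$ the set $\{b,c\}$ is a minimal vertex cover containing the entire maximal clique $\{b,c\}$. What is true, and what your exchange actually uses, is that a vertex cover omits \emph{at most} one vertex of each clique; the ``exactly one'' picture should be dropped, and with it the parametrization of generators of $J(G)^s$ as choices of omitted vertices. (The swap itself is fine where it applies: if $x_t\notin W_i'$ and $x_j$ is a free vertex of a facet $F\ni x_t$, then $x_j\in W_i'$ and $(W_i'\cup\{x_t\})\setminus\{x_j\}$ is a vertex cover, which suffices since only membership in $J(G)^s$ is needed.)

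The genuine gap is the case you flag at the end, where $x_t$ is the designated free vertex of its unique facet $F$ and hence the last vertex of $F$ in your order. Here ``refactoring $m_2$'' cannot rescue the argument: the weakly polymatroidal condition is a statement about the monomials $m_1$ and $m_2$ alone, and rewriting $m_2$ as a different product of covers changes neither $m_2$ nor produces the required index $j>t$ with $x_t(m_2/x_j)\in J(G)^s$. Since every neighbour of $x_t$ lies in $F$ and, by your ordering, every vertex of $F$ precedes $x_t$, any admissible $x_j$ would have to be a non-neighbour of $x_t$; then inserting $x_t$ into one factor while deleting $x_j$ from a possibly different factor is a different kind of exchange that you have not justified. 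The proposed ``induction on the lex distance'' is not set up --- there is no inductive statement and no base case --- and this is precisely the hard case of the theorem, where the hypothesis $|V|-1\le\bigl|\bigcup_{F\in\mathcal{FV}({\rm Cliq}(G))}F\bigr|$ must interact with the choice of order. As written, the proof is incomplete at exactly this point.
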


  \begin{example}
    The graph $G$ in Figure \ref{graphpicture} has clique complex
    $${\rm Cliq}(G) = \langle \{x_1,x_2,x_3\},\{x_3,x_4\},\{x_3,x_5\} \rangle.$$
    Note that each facet has a free vertex:  $x_1,x_2$ only appear in
    the first facet, while $x_4$ only appears in the second, and $x_5$ only
    in the third.  Since
    $$|V|-1 = 4 \leq |\{x_1,x_2,x_3\}\cup \{x_3,x_4\} \cup \{x_3,x_5\}| = 5.$$
    all powers of the cover ideal $J(G)$ are componentwise linear
    by Theorem \ref{combinatorialresult}.
    \end{example}

  Erey (see \cite{E2019,E2021}) approached Conjecture
  \ref{hhoconjecture} by finding an order of the minimal generators
  of $J(G)^s$ that gives linear quotients.  In the statement
  below, the {\it path graph} $P_n$ is
  the graph with vertex set $V = \{x_1,\ldots,x_n\}$
  and edge set $E = \{\{x_1,x_2\},\{x_2,x_3\},\ldots,\{x_{n-1},x_n\}\}$.
  A graph is {\it $(C_4,2K_2)$-free} if it has no induced graph
  isomorphic to $C_4$ or two copies of $K_2$.

\begin{theorem} Conjecture \ref{hhoconjecture} is true
  for the cover ideals of the following chordal graphs:
  \begin{enumerate}
  \item \cite[Theorem 3.7]{E2019} Chordal graphs
    that are also $(C_4,2K_2)$-free graphs.
  \item \cite[Theorem 4.3]{E2021} The path graphs $P_n$.
  \end{enumerate}
  \end{theorem}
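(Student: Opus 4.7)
The plan is to apply Theorem \ref{thm.LQimpliesCWL} by producing, for each graph $G$ in the two stated classes and each $s \geq 1$, an explicit linear ordering of the minimal monomial generators of $J(G)^s$ with respect to which $J(G)^s$ has linear quotients. This matches the strategy attributed to Erey in the preceding paragraph. The first preparatory step is a combinatorial description of the minimal generating set of $J(G)^s$: each minimal generator arises, after removing redundancies, from a product $u_{i_1}\cdots u_{i_s}$ of minimal vertex covers of $G$, but many such products are divisible by other minimal generators and must be discarded. A normal form — for example, a multiset of minimal covers together with a prescribed reduction rule on repeated vertex use — is needed before an order can be defined.

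For part (1), I would first note that chordal graphs are automatically $C_4$-free, so ``chordal and $(C_4,2K_2)$-free'' coincides with ``chordal and $2K_2$-free'', which is exactly the class of split graphs. Writing $V(G)=K\sqcup S$ with $K$ a clique and $S$ an independent set, every minimal vertex cover of $G$ is determined by choosing at most one vertex $x_a\in K$ to omit and then including all $S$-neighbors of $x_a$. This rigidity makes the minimal generators of $J(G)^s$ indexable by multisets of at most $s$ omitted clique vertices, with $\emptyset$ allowed. I would order generators lexicographically by this multiset, and for an ordered pair $m_j>m_i$ exhibit a linear form in $\langle m_1,\dots,m_{i-1}\rangle:\langle m_i\rangle$ by swapping one occurrence of an omitted clique vertex in $m_j$ with one in $m_i$; the attached $S$-neighbor of the swapped vertex is the required linear generator.

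For part (2), the path graph $P_n$ admits a clean recursive structure: a minimal vertex cover either contains $x_n$ (and restricts to a minimal cover of $P_{n-2}$) or omits $x_n$ and must contain $x_{n-1}$. Inducting on $n$, I would partition the minimal generators of $J(P_n)^s$ according to how many of their $s$ factors include $x_n$, interleaving the inductive order on $J(P_{n-2})^s$ with this count. The colon condition then splits into two cases: when two generators share their ``tail pattern'' on $\{x_{n-1},x_n\}$, the linear form is produced by the inductive hypothesis applied to their restrictions; otherwise one exhibits $x_{n-1}$ or $x_n$ directly using the pendant nature of the endpoint. The main obstacle, in both parts, is precisely the uniform verification of the colon property for all $s$: when $s=1$ one recovers Theorem \ref{chordaliscwl}, but as $s$ grows the number of generator pairs to compare proliferates, and only the $(C_4,2K_2)$-free hypothesis (respectively, the pendant-edge structure of $P_n$) keeps the ``difference'' between two ordered generators confined to a single local swap along an edge, thereby supplying a linear generator of the colon ideal in every case.
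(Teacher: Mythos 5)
Your overall strategy --- order the minimal generators of $J(G)^s$ and verify linear quotients, then invoke Theorem \ref{thm.LQimpliesCWL} --- is exactly the approach the paper attributes to Erey's proofs of both parts, and your observation that ``chordal and $(C_4,2K_2)$-free'' is precisely the class of split graphs is correct (chordal graphs are $C_4$- and $C_5$-free, so the F\"oldes--Hammer characterization applies), as is your description of the minimal vertex covers of a split graph $V=K\sqcup S$ as $K$ and the sets $(K\setminus\{x_a\})\cup N_S(x_a)$. The difficulty is that everything after that point is asserted rather than proved, and the one mechanism you do specify would fail as stated. For linear quotients you must show that for each $j<i$ there is some $k<i$ with $m_k:m_i$ generated by a \emph{single} variable dividing $m_j:m_i$. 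Your proposed $m_k$, obtained by swapping one omitted clique vertex $x_a$ (used in $m_i$) for another $x_b$, replaces a factor $C_a=(K\setminus\{x_a\})\cup N_S(x_a)$ by $C_b$, and $C_b\setminus C_a=\{x_a\}\cup\bigl(N_S(x_b)\setminus N_S(x_a)\bigr)$. In a split graph the sets $N_S(x_a)$ need not be nested (the configuration $y\in N_S(x_a)\setminus N_S(x_b)$, $z\in N_S(x_b)\setminus N_S(x_a)$ induces only a $P_4$, which is permitted), so this colon is generically a monomial of degree at least $2$, not ``the attached $S$-neighbor of the swapped vertex.'' Identifying an order for which a suitable intermediate $m_k$ always exists is the entire content of Erey's argument, and the sketch does not supply it even for $s=1$.

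Part (2) has an additional concrete error: a minimal vertex cover of $P_n$ containing $x_n$ must \emph{omit} $x_{n-1}$ and therefore \emph{contain} $x_{n-2}$, and its restriction to $P_{n-2}$ need not be a minimal cover of $P_{n-2}$ (for $P_4$, the cover $\{x_2,x_4\}$ restricts to $\{x_2\}$, which is fine, but the converse direction already fails: $\{x_1\}$ is a minimal cover of $P_2$ yet $\{x_1,x_4\}$ is not a cover of $P_4$). So the proposed recursion does not give a bijection to build the order on, and the bookkeeping of which endpoint vertices are forced must be redone; Erey's actual proof in \cite{E2021} organizes this via a ``rooted order'' rather than the interleaving you describe. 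Finally, the normal form for the minimal generators of $J(G)^s$ (which products of $s$ minimal covers survive as minimal generators, and when two multisets of covers yield the same monomial) is flagged as necessary but never constructed; without it the ordering is not defined on the correct generating set. In short, the plan points in the same direction as the cited proofs, but the steps where all the difficulty is concentrated are either missing or, in the two places where a mechanism is made explicit, incorrect as stated.
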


\begin{remark}
Erey showed a stronger result in
    \cite[Theorem 3.7]{E2019}, namely, the
    cover ideal $J(G)$ of {\it any} $(C_4,2K_2)$-free graph $G$ satisfies
    the property that $J(G)^s$ is componentwise linear for all $s \geq 1$.
    Note that the cycle $C_5$ is a $(C_4,2K_2)$-free graph that is not
    chordal.
  \end{remark}

\begin{remark}
  As an intermediate step, Erey and Qureshi first proved
  that $J(P_n)^2$ was componentwise linear in
  \cite[Theorem 5.1]{EQ2019}.  Erey was later able to
  extend this result to all powers, as noted above.
\end{remark}

Herzog, Hibi, and Moradi were also able to prove
that the same result for $P_n$ as a consequence
of a more general result, which again uses the
Rees algebra.  We recall
how one can construct the Rees algebra of an ideal $I = \langle f_1,\ldots,
f_s \rangle \subseteq R = k[x_1,\ldots,x_n]$ when $I$ is not necessarily
generated by terms of the same degree.  In the ring $R[t]$, consider
the subring
$$R[It] = k[f_1t,f_2t,\ldots,f_st] \subseteq R[t].$$
Let $S = k[y_1,\ldots,y_s,x_1,\ldots,x_n]$.  We define a
$k$-algebra homomorphism $\varphi:S \rightarrow R$ by
$$y_i \mapsto f_it ~~\mbox{and}~~ x_j \mapsto x_j $$
for $i=1,\ldots,s$ and $j=1,\ldots,n$.  Let $J = {\rm ker}{\varphi}$.
The ideal $J$ is  called the {\it defining ideal of the Rees ring}
${\rm Rees}(I)$.  A new criteria for when an ideal is componentwise linear
is then given in terms of $J$.

\begin{theorem}[{\cite[Theorem 2.6]{HHM2020}}]\label{quadraticrees}
    Let $I$ be a monomial ideal and let $J$ be the defining ideal of the Rees
    ring ${\rm Rees}(I)$.  There exists a monomial order
    $>$ such that if the initial ideal ${\rm in}_>(J)$
    is generated by quadratic monomials with respect to this order,
    then $I^s$ is componentwise
    linear for all $s \geq 1$.
\end{theorem}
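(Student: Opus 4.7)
The plan is to transfer the quadratic initial-ideal hypothesis on the Rees algebra to componentwise linearity of every $I^s$ via Gr\"obner degeneration and semicontinuity of Betti numbers, with linear quotients as the decisive tool on the combinatorial side. Write $I=\langle f_1,\ldots,f_r\rangle$ and equip $S=k[y_1,\ldots,y_r,x_1,\ldots,x_n]$ with the nonstandard $y$-grading $\deg y_i=1$, $\deg x_j=0$, so that $S$ becomes a standard graded $R$-algebra. The defining map $\varphi$ of the excerpt is $R$-linear and graded once we set $\deg t=1$ on $R[t]$, identifying $(S/J)_s\cong I^s$ as graded $R$-modules (up to a shift) for each $s\geq 1$. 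I would choose $>$ to refine the $y$-grading, which is always possible and ensures that $\operatorname{in}_>(J)$ is $y$-homogeneous.

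First I would invoke the standard Gr\"obner degeneration: the quadratic hypothesis produces a flat family over $\AA^1_k$ of graded $R$-algebras whose generic fiber is $S/J$ and whose special fiber is $S/\operatorname{in}_>(J)$. Restricting to $y$-degree $s$ yields a flat family of graded $R$-modules with generic fiber $I^s$ and special fiber $M_s:=(S/\operatorname{in}_>(J))_s$. Applying the truncation $N\mapsto\langle N_d\rangle$ inside this family and invoking upper semicontinuity of graded Betti numbers gives $\beta^R_{i,j}(\langle(I^s)_d\rangle)\leq\beta^R_{i,j}(\langle(M_s)_d\rangle)$ for all $i,j,d$. Hence, to prove that $I^s$ is componentwise linear for all $s$, it suffices to prove that each $M_s$ is componentwise linear.

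The heart of the proof is to establish that $M_s$ has linear quotients over $R$, from which componentwise linearity follows by Theorem \ref{thm.LQimpliesCWL}. Observe first that since $\varphi$ is injective on $k[x_1,\ldots,x_n]$, the ideal $J$ contains no polynomial in the $x$'s alone, so the quadratic generators of $\operatorname{in}_>(J)$ are confined to the forms $y_iy_j$ or $y_ix_k$. The standard monomials of $y$-degree $s$ form an $R$-basis of $M_s$, and if we order them by the restriction of $>$, then each successive colon ideal in the chain should be generated exactly by those variables that arise as the reducing factor in the quadratic relations $y_?\cdot z\in\operatorname{in}_>(J)$ needed to push a larger standard monomial into smaller standard form. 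Applying $\varphi$ turns each such variable into a linear form of $R$, producing the linear-quotient ordering.

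The main obstacle is this final combinatorial step. A pure $y$-quadratic $y_iy_j\in\operatorname{in}_>(J)$ corresponds under $\varphi$ to a polynomial relation among $f_if_j$ of total $R$-degree $\deg f_i+\deg f_j$, so interpreting the associated colon generator as a linear form of $R$ requires care. To resolve this I would first apply polarization: by Theorem \ref{linquot-polar}, $I$ has linear quotients iff $I^{\pol}$ does, and polarization behaves well with Rees algebras, so one may reduce to the square-free case, where each $f_i^{\pol}$ is a square-free monomial and the quadratic relations in the polarized Rees algebra transparently yield colon ideals generated by single variables. Pulling back via Theorem \ref{linquot-polar} then gives linear quotients for each $I^s$, whence componentwise linearity.
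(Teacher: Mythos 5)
This theorem is quoted in the survey from \cite[Theorem 2.6]{HHM2020} without proof, so your proposal must be measured against the argument in that paper (which descends from Herzog--Hibi--Zheng's treatment of the equigenerated case). That argument does \emph{not} degenerate $I^s$ at all: it uses the standard monomials of $\operatorname{in}_>(J)$ purely as bookkeeping, to assign to each honest monomial generator $u$ of $I^s$ a canonical factorization $u=u_{i_1}\cdots u_{i_s}$ (the one whose $y$-monomial $y_{i_1}\cdots y_{i_s}$ is standard), orders these generators using the specific \emph{sorting-type} order constructed in \cite{HHM2020}, and verifies the linear-quotient condition in $R$ directly from the relations $x_ky_i-x_\ell y_j\in J$ whose initial terms are the mixed quadrics. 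Your proposal replaces this with a Gr\"obner degeneration plus semicontinuity, and this is where the gaps lie. First, upper semicontinuity of graded Betti numbers along the flat family compares $I^s$ with its special fiber, but it does not pass to the truncations: $\langle (M_s)_d\rangle$ is not the special fiber of a flat family with generic fiber $\langle (I^s)_d\rangle$ (truncation does not commute with degeneration, and the two truncated modules need not have the same Hilbert function), so the inequality $\beta^R_{i,j}(\langle(I^s)_d\rangle)\leq\beta^R_{i,j}(\langle(M_s)_d\rangle)$ is unjustified. Worse, $M_s=(S/\operatorname{in}_>(J))_s$ is an $R$-module presented by standard monomials, not an ideal of $R$ and not $\operatorname{in}(I^s)$ for any order on $R$, so ``componentwise linear'' and ``linear quotients'' for $M_s$ do not reduce the problem to anything of the form the cited Theorems \ref{thm.LQimpliesCWL} and \ref{linquot-polar} address.

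Second, the step you yourself flag as ``the main obstacle'' is precisely the content of the theorem, and your proposed resolution does not work. The polarization detour fails because $(I^s)^{\pol}\neq(I^{\pol})^s$ in general (even for square-free $I$, the powers $I^s$ are not square-free for $s\ge 2$), and Theorem \ref{linquot-polar} relates $I$ to $I^{\pol}$, not $I^s$ to $(I^{\pol})^s$; moreover no compatibility of polarization with Rees algebras is available. There is also a quantifier issue: the theorem asserts the existence of a \emph{particular} monomial order (defined in \cite{HHM2020}), and both the quadratic hypothesis and the proof of the linear-quotient property depend on its defining features; ``any order refining the $y$-grading'' is not a substitute. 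What is salvageable from your write-up is the correct observation that $J\cap R=0$ forces the quadratic generators of $\operatorname{in}_>(J)$ to be of the form $y_iy_j$ or $x_ky_i$; the intended proof then uses the pure $y$-quadrics only to single out the standard factorizations, and the mixed quadrics to produce the linear colon generators --- a computation that must be carried out on the generators of $I^s$ itself, degree by degree since $I$ need not be equigenerated.
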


The definition of the required monomial order can be found in \cite{HHM2020}.
Using Theorem \ref{quadraticrees}, Herzog, Hibi, and Moradi
were able to verify Conjecture \ref{hhoconjecture} for more families
of chordal families.  A
{\it biclique graphs} is a graph on the vertex set
$\{x_1,\ldots,x_p,y_1,\ldots,y_q,z_1,\ldots,z_r\}$ such that
the induced graphs on $\{x_1,\ldots,x_p,y_1,\ldots,y_q\}$ and
on $\{y_1,\ldots,y_q,z_1,\ldots,z_r\}$ are complete graphs.
{\it Cameron-Walker graphs} are graphs whose induced matching
number equals its matching number; we do not
formally define this family here, but point the reader to
\cite{HHKO2015}.

\begin{theorem}[{\cite[Corollary 4.7]{HHM2020}}]
  Conjecture \ref{hhoconjecture} is true
  for the cover ideals of the following chordal graphs:
  \begin{enumerate}
  \item Biclique graphs.
  \item The path graphs $P_n$.
  \item Cameron-Walker graphs whose bipartite graph
    is a complete bipartite graph.
  \end{enumerate}
\end{theorem}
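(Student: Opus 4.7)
The plan is to apply Theorem \ref{quadraticrees} uniformly to all three families: for each class of graphs $G$, I would exhibit a monomial order $>$ on the polynomial ring $S = k[y_F, x_1,\ldots,x_n]$ (where the variables $y_F$ are indexed by the minimal vertex covers of $G$) such that the initial ideal of $\ker\varphi$ is generated by quadratic monomials. This reduces the theorem to a Gr\"obner-basis / toric-geometry computation for each family, since Theorem \ref{quadraticrees} then immediately yields that $J(G)^s$ is componentwise linear for all $s \ge 1$.

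First I would work out an explicit combinatorial description of the minimal vertex covers in each case. For a biclique graph on $\{x_1,\ldots,x_p,y_1,\ldots,y_q,z_1,\ldots,z_r\}$, any minimal vertex cover must contain either all of the $x_i$'s or all of the $y_j$'s (to cover the first clique) and similarly either all of the $y_j$'s or all of the $z_k$'s; this yields a small, highly structured generating set for $J(G)$. For $P_n$, the minimal vertex covers correspond to maximal independent sets in the complement, which can be encoded by sequences with forbidden consecutive gaps, giving another combinatorial parametrization. For Cameron--Walker graphs whose bipartite part is complete bipartite, the minimal covers decompose according to a "core + pendants" structure coming from the pendant edges and pendant triangles attached to the complete bipartite graph, again giving a clean generating list.

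With the generating sets in hand, the defining ideal $\ker\varphi$ of $\mathrm{Rees}(J(G))$ is a toric ideal generated by binomials $y_{F_1}\cdots y_{F_k} - y_{G_1}\cdots y_{G_k}$ corresponding to multiset exchanges of vertex covers. I would then choose for each family a suitable reverse-lexicographic order on the $y_F$'s induced by a shelling-like order of the covers (for trees/paths one can order covers "left-to-right"; for bicliques and Cameron--Walker graphs one orders by the side of the complete bipartite part a cover takes the majority of its vertices from). The verification that $\mathrm{in}_>(\ker\varphi)$ is quadratic amounts to showing that every exchange of two multisets of covers can be realized by a sequence of $2$-for-$2$ swaps $\{F,F'\} \leftrightarrow \{G,G'\}$, each of which contributes a quadratic binomial whose leading term kills any longer leading monomial.

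The main obstacle will be the last step: establishing the "swap lemma" that any syzygy among multisets of minimal covers is generated in degree two. This is a purely combinatorial statement about the exchange structure of the family of minimal vertex covers, and it is where the three cases diverge in difficulty. For biclique and complete-bipartite Cameron--Walker graphs the large symmetry makes the swap explicit; for $P_n$ the argument is more delicate because vertex covers of a path have a rigid linear structure, and one must show that two "long" multiset exchanges can always be factored through a single local swap of two adjacent covers along the path. Granting the combinatorial swap lemma in each case, quadratic generation of $\mathrm{in}_>(\ker\varphi)$ follows and Theorem \ref{quadraticrees} completes the proof for all three families at once.
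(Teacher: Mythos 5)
Your top-level strategy --- reducing all three families to Theorem \ref{quadraticrees} by exhibiting a monomial order for which the initial ideal of the defining ideal of ${\rm Rees}(J(G))$ is generated by quadrics --- is exactly the route taken in \cite{HHM2020}, which is all this survey records. The gap is in how you set up the verification. The defining ideal $\ker\varphi$ lives in $S=k[y_1,\dots,y_s,x_1,\dots,x_n]$ and, while it is toric, its binomial generators have the form $u\,y_{F_1}\cdots y_{F_k}-v\,y_{G_1}\cdots y_{G_k}$ with $u,v$ monomials in the $x$-variables; the pure $y$-binomials you describe generate only the defining ideal of the special fiber $k[f_1t,\dots,f_st]$. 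For the families at hand this is not a technicality: $J(P_n)$ (already for $n=5$, where $\{x_2,x_4\}$ and $\{x_1,x_3,x_5\}$ are both minimal covers) and the cover ideals of Cameron--Walker graphs are not generated in a single degree, so distinct minimal vertex covers have different cardinalities and an identity $f_{F_1}\cdots f_{F_k}=f_{G_1}\cdots f_{G_k}$ between equal-length products of cover monomials essentially never holds without compensating $x$-factors. The quadratic generators of ${\rm in}_>(\ker\varphi)$ that carry the real content are therefore the mixed ones $x_iy_F$ --- this is the ``$x$-condition'' of \cite{HHZ2004} and \cite{HHM2020} --- and your proposed swap lemma, which factors equal-cardinality multiset exchanges of covers into $2$-for-$2$ swaps, does not see them at all. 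An honest execution must show that every binomial of $\ker\varphi$ has leading term divisible by some $x_iy_F$ or $y_Fy_{F'}$ already in the initial ideal, which is where the actual combinatorial work on covers of paths, bicliques, and Cameron--Walker graphs takes place.

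A second, smaller issue: Theorem \ref{quadraticrees} is not a license to pick any convenient order. As the survey notes, the monomial order is a specific one constructed in \cite{HHM2020} (the $y$-variables dominate the $x$-variables and are themselves sorted by a prescribed order on the generators of $J(G)$), and the passage from a quadratic initial ideal to componentwise linearity of all powers of a non-equigenerated ideal is proved only for orders of that type. Before adopting your reverse-lexicographic orders ``induced by a shelling-like order of the covers,'' you would need to check that they lie in the admissible class, or reprove the transfer statement for them.
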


Kumar and Kumar have recently shown that Conjecture \ref{hhoconjecture}
holds for all trees.  {\it Trees} are graphs which have no induced cycles, and
thus, they are examples of chordal graphs.  Kumar and Kumar's proof
uses a different strategy then the above results.  In the case
that $G$ is a tree, $J(G)^s$ equals its $s$-symbolic power (to be
defined in the next section).  It can the be shown that
$(J(G)^s)^{\rm pol}$,
the polarization of $J(G)^s$, is the cover ideal of another graph.
The proof for trees then shows that the cover ideal of this new graph
is also componentwise linear.  This strategy will be expanded upon
in more detail when we look at symbolic powers in the next section.

\begin{theorem}[{\cite[Corollary 3.5]{KK2020}}]
     Conjecture \ref{hhoconjecture} is true
     for the cover ideals of all trees.
\end{theorem}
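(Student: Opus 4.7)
The plan is to reduce the problem to the squarefree case by polarization and then invoke Theorem \ref{chordaliscwl} for a new graph built from $T$. First, because a tree is bipartite, its edge ideal is normally torsion-free, and hence the symbolic and ordinary powers of the cover ideal coincide: $J(T)^{(s)} = J(T)^s$ for every $s \geq 1$ (a classical result of Gitler--Villarreal, to be discussed in the next section of the survey). It therefore suffices to show that the symbolic power $J(T)^{(s)}$ is componentwise linear.

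Next, starting from the primary decomposition
$$J(T)^{(s)} = \bigcap_{\{x_i,x_j\} \in E(T)} \langle x_i, x_j\rangle^s,$$
I would compute the polarization $(J(T)^{(s)})^{\rm pol}$ explicitly. The key observation is that this polarization is a squarefree monomial ideal, and in fact equals the cover ideal $J(T^{[s]})$ of a graph $T^{[s]}$ obtained from $T$ by replacing each edge by a local block on $2s$ new vertices (the block coming from polarizing the single factor $\langle x_i, x_j\rangle^s$, whose polarization is itself a squarefree ideal with a known cover-ideal interpretation). This identification of $(J(T)^{(s)})^{\rm pol}$ as a cover ideal is the heart of Kumar--Kumar's construction and is exactly what is asserted in the discussion preceding the theorem.

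The crucial step is then to prove that the graph $T^{[s]}$ is chordal. The tree hypothesis enters here in an essential way: any induced cycle $C_n$ with $n \geq 4$ in $T^{[s]}$ either lies inside a single edge-block (which one rules out by inspecting the explicit local structure of the block, since the block coming from a single edge is itself chordal), or it stretches across several blocks. In the latter case, collapsing each block back to its defining edge of $T$ would produce a closed walk in $T$ of length at least four, which forces an actual cycle in the tree and gives a contradiction. Granting chordality, Theorem \ref{chordaliscwl} yields that $J(T^{[s]})$ is componentwise linear, and since polarization preserves graded Betti numbers (and hence componentwise linearity, by inspection of the definition), we conclude that $J(T)^s = J(T)^{(s)}$ is componentwise linear.

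The main obstacle I expect is the verification that $T^{[s]}$ is chordal. Writing down the precise combinatorial description of the edge-block produced by polarizing $\langle x_i, x_j\rangle^s$ is notation-heavy but routine; the harder work is handling induced cycles that cross the boundary between blocks, that is, those using vertices of the form $x_{i,\ast}$ coming from two different edges of $T$ incident to the common vertex $x_i$. Such cycles must be analyzed by a mixture of the within-block structure and the global tree structure, and this is the only place where the full ``no induced $C_n$'' strength of the tree hypothesis (rather than just bipartiteness) gets used. A secondary, smaller obstacle is checking the base case $s = 1$, where $T^{[1]} = T$ and the argument collapses to Theorem \ref{chordaliscwl} itself.
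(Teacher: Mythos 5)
Your opening reduction is the right one and matches the paper: a tree $T$ is bipartite, so $J(T)^{(s)} = J(T)^s$, and by Lemma \ref{lem.SF2018} the polarization $\left(J(T)^{(s)}\right)^{\pol}$ is the cover ideal of the duplicated graph $T_s$ of Construction \ref{const.vertices} (your $T^{[s]}$). However, the step you yourself identify as crucial --- proving that $T^{[s]}$ is chordal so that Theorem \ref{chordaliscwl} applies --- is false, and no case analysis will rescue it. Since every edge of $T_s$ lies over an edge of $T$, the graph $T_s$ is bipartite whenever $T$ is, and a bipartite chordal graph is necessarily a forest; so chordality of $T_s$ fails as soon as $T_s$ acquires a single cycle. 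This happens almost immediately: for the path $P_3$ on $x_1,x_2,x_3$ with $s=2$, the vertices $x_{1,1}, x_{2,2}, x_{3,1}, x_{2,1}$ induce a $C_4$ in $T_2$ (the four cycle edges have index sums at most $3$, while both potential chords would have to join two copies of the same original vertex and hence are absent). Even the within-block half of your analysis fails: for a single edge and $s=3$, the vertices $x_{1,1}, x_{2,2}, x_{1,2}, x_{2,1}$ induce a $C_4$ inside one block. Your collapsing argument controls only those cycles that project to closed walks visiting several edges of $T$; it says nothing about the cycles created among the copies of two adjacent vertices, and those are exactly the ones that occur.

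The route actually taken (Kumar--Kumar, recorded as Theorem \ref{thm.CWLclasses}(3)) replaces chordality of $T_s$ by vertex decomposability of its independence complex: one proves that for a tree $T$ the graphs $T(s_1,\dots,s_{n-1})$, and in particular $T_s$, are vertex decomposable, and then Lemma \ref{lem.VDimpliesCWL} gives that $J(T_s) = \left(J(T)^{(s)}\right)^{\pol}$ has linear quotients; Theorem \ref{linquot-polar} transfers linear quotients back through polarization, and Theorem \ref{thm.LQimpliesCWL} concludes. This last transfer also exposes a secondary inaccuracy in your sketch: componentwise linearity does not pass through polarization ``by inspection of the definition,'' because the degree-$d$ piece of $I$ does not polarize to the degree-$d$ piece of $I^{\pol}$; the property that passes cleanly is having linear quotients, which is why the paper routes the argument through that notion rather than through Betti numbers.
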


\begin{remark}
  The above result is slightly stronger since it is shown
  that $J(G)^s$ has linear quotients for all $s \geq 0$ when $G$ is a
  tree.
  \end{remark}

We round out this section by describing three
results not directly related to Conjecture \ref{hhoconjecture},
but related to the more general theme of Question \ref{maintheme}.
The first result concerns the cover ideals of graphs that may not
be chordal.  The second result concerns edge ideals, not cover ideals,
and the third is for quadratic monomial ideals (which includes all edge ideals).

A  graph $G$ is {\it bipartite} if the vertex
set of $G$ can partitioned as $V = V_1 \cup V_2$ so that for
every edge $e \in E$, one has $e \cap V_1 \neq \emptyset$
and $e \cap V_2 \neq \emptyset$.  That is, every edge has one
endpoint in $V_1$ and the other in $V_2$.  We then
have the following result for the cover ideals of bipartite
graphs; an earlier version of this result appeared in \cite[Theorem 2.2]{MM2010}.

\begin{theorem}[{\cite[Corollary 3.7]{SF2018}}] \label{bipartitelinear}
  Let $G$ be a bipartite graph.  Then $J(G)$ has linear
  powers if and only if $J(G)$ has a linear resolution.
\end{theorem}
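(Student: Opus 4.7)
The ``only if'' direction is immediate from the definition: if $J(G)^s$ has a linear resolution for every $s\ge 1$, then in particular $J(G)=J(G)^1$ does. The real content is the converse, so the plan below targets the ``if'' direction.

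The first ingredient is the well-known equality
$$J(G)^s \;=\; \bigcap_{\{x_i,x_j\}\in E} \langle x_i,x_j\rangle^s \quad\text{for all } s\ge 1,$$
valid whenever $G$ is bipartite. This is a consequence of K\"onig's theorem together with the integrality of the incidence matrix of $G$; in the language of the next section, it says that the ordinary and symbolic powers of the cover ideal of a bipartite graph agree. The intersection form on the right is much more amenable to combinatorial analysis than the iterated product on the left.

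The second ingredient is that the hypothesis ``$J(G)$ has a linear resolution'' forces $G$ to be unmixed, so that all minimal generators of $J(G)$ share a common degree; consequently all minimal generators of $J(G)^s$ share the common degree $s\cdot\deg J(G)$. Since a monomial ideal generated in a single degree has a linear resolution if and only if it is componentwise linear, the task reduces to showing that $J(G)^s$ is componentwise linear. I would invoke the Herzog--Hibi classification of bipartite graphs whose cover ideal has a linear resolution: such $G$ are precisely those arising from a finite poset $P$ on $\{p_1,\ldots,p_n\}$, with vertex set $\{x_1,\ldots,x_n,y_1,\ldots,y_n\}$ and edges $\{x_i,y_j\}$ whenever $p_i\le p_j$ in $P$. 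Using a linear extension of $P$ to order the variables (and, lexicographically, the generators of $J(G)^s$), I would aim to show that $J(G)^s$ is weakly polymatroidal, and then conclude via Theorems \ref{thm.weaklypoly} and \ref{thm.LQimpliesCWL}.

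The main obstacle is this last verification. Although the poset supplies a natural candidate ordering, checking the exchange-style axiom of weakly polymatroidal is delicate because one must track how the symbolic exponent $s$ redistributes among the variables when one manipulates a minimal vertex cover of $G$ counted with multiplicity $s$. An alternative route is to use Theorem \ref{linquot-polar}: polarize $J(G)^s$ to a square-free monomial ideal $(J(G)^s)^{\rm pol}$, identify it as the cover ideal of a larger bipartite graph $\widetilde G$ that inherits a Herzog--Hibi poset structure from $G$, and then deduce linear quotients from the square-free case already covered by Theorem \ref{chordaliscwl} (applied to the chordal-looking bipartite blow-up). In either approach, the crucial step is converting the poset data for $G$ into an explicit linear order on the generators of $J(G)^s$, and this combinatorial bookkeeping is the technical heart of the argument.
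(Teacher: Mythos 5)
Your reductions are all sound and match the known arguments: the ``only if'' direction is trivial; $J(G)^s=J(G)^{(s)}$ for bipartite $G$; a linear resolution for $J(G)$ forces $G$ to be unmixed, so $J(G)^s$ is generated in a single degree and the problem becomes componentwise linearity; and by Theorem \ref{eagonreiner} the hypothesis says exactly that $G$ is a Cohen--Macaulay bipartite graph, hence of Herzog--Hibi poset type. Your primary route --- order the generators of $J(G)^s$ via a linear extension of the poset and verify weak polymatroidality, then apply Theorem \ref{thm.weaklypoly} --- is precisely the route of Mohammadi--Moradi \cite[Theorem 2.2]{MM2010}, which this survey notes is the earlier version of the stated result; the cited source \cite{SF2018} instead goes through symbolic powers, the polarization identity of Lemma \ref{lem.SF2018}, and vertex decomposability of the blown-up graph $G_s$. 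So the plan is viable. The genuine gap is that you stop exactly where the theorem's content begins: the exchange-axiom verification for $J(G)^s$ is not a routine bookkeeping step to be deferred, it \emph{is} the proof, and nothing in your write-up indicates how the poset relation $p_i\le p_j$ interacts with the exponent vectors of products of $s$ vertex covers to produce the required variable swap. As written, the argument establishes only the (easy) framing.

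Your fallback route also contains a concrete error. After polarizing, $\bigl(J(G)^{(s)}\bigr)^{\pol}=J(G_s)$ where $G_s$ is again a \emph{bipartite} graph, and $G_s$ is in general not chordal: already for $G$ the path on $x_1,y_1,x_2,y_2$ coming from the two-element chain, the graph $G_2$ contains the induced four-cycle $x_{1,1},y_{2,2},x_{2,1},y_{2,1}$. So Theorem \ref{chordaliscwl} does not apply to the blow-up. The correct square-free input at that point is to prove directly that $G_s$ is vertex decomposable (or that $J(G_s)$ has linear quotients) and then descend through Theorem \ref{linquot-polar} and Theorem \ref{vd=>shellable=>scm}; that vertex-decomposability argument is the technical heart of Seyed Fakhari's proof and would need to be supplied.
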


\noindent

As is evident from this section, the majority of work
on powers of componentwise linear ideals has focused on cover ideals.
Of course, similar questions could be asked about edge ideals.
To-date, the primary focus has been on the linear property, as demonstrated
in the next two results.

\begin{theorem}[{\cite[Theorem 2.12]{E2019}}] Let $G$ be a
  $(C_4,2K_2)$-free graph with edge ideal $I(G)$.  Then
  $I(G)^s$ is componentwise linear for all $s \geq 2$;
  in fact, $I(G)^s$ has a linear resolution for all $s \geq 2$.
\end{theorem}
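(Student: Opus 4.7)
The plan is to prove the stronger statement that $I(G)^s$ has a linear resolution for every $s \ge 2$, and to do so I would establish linear quotients (in the sense of Theorem \ref{thm.LQimpliesCWL}) on each power with respect to a suitable ordering of the monomial generators. Since every minimal generator of $I(G)^s$ has the same degree $2s$, linear quotients immediately upgrades to a linear resolution.

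The structural backbone of the argument is the following consequence of the $2K_2$-free condition: for any two vertex-disjoint edges $e_1=\{a,b\}$ and $e_2=\{c,d\}$ of $G$, at least one of the four \emph{bridging pairs} $\{a,c\},\{a,d\},\{b,c\},\{b,d\}$ is again an edge. The $C_4$-free condition then restricts \emph{which} of these can simultaneously be non-edges. These local replacement rules will be what produces linear colon ideals. First I would carry out the base case $s=2$. Order the edges $e_1,\ldots,e_m$ of $G$ in a lex-like fashion, and order the generators $e_ie_j$ of $I(G)^2$ by the corresponding pair. For the colon ideal $\langle e_1e_j,\ldots,e_{i-1}e_j,e_ie_1,\ldots,e_ie_{j-1}\rangle:\langle e_ie_j\rangle$ I would show that every degree-2 obstruction can be replaced, using a bridging edge from the $2K_2$-free property, by a variable. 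The $C_4$-free hypothesis is invoked to ensure that when two candidate bridging edges exist the resulting simplification is consistent and does not force a new quadratic generator.

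For the inductive step I would invoke Banerjee's machinery for powers of edge ideals: $I(G)^{s+1}$ has a linear resolution provided $I(G)^s$ does, and, for every minimal monomial generator $m$ of $I(G)^s$, the colon ideal $\bigl(I(G)^{s+1}:m\bigr)$ is generated in degree $\le 1$ up to the usual edge-ideal contribution. Concretely, I would show that $\bigl(I(G)^{s+1}:m\bigr)$ is generated by a set of variables of $R$ together with edges of an induced subgraph of $G$ that is again $(C_4,2K_2)$-free. The $2K_2$-free condition is what forces the quadratic pieces of the colon ideal to collapse to variables: for any edge of $G$ vertex-disjoint from $\mathrm{supp}(m)$ that appears in the colon, a bridging edge together with a subword of $m$ produces a variable generator, absorbing the quadratic one. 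The outer induction on $s$ then closes.

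The main obstacle will be controlling the colon ideals in the inductive step. Describing $\bigl(I(G)^{s+1}:m\bigr)$ requires tracking which products of $s+1$ edges, after dividing by $m$, yield a monomial of bounded degree; this is combinatorially intricate because the multiplicities encoded in $m$ can create spurious generators that a superficial application of the $2K_2$-free condition fails to eliminate. The hard technical lemma is that the presence of an induced $C_4$ or of a $2K_2$ in a relevant subgraph is precisely what is needed to manufacture an irreducible quadratic generator of the colon, and so the $(C_4,2K_2)$-free hypothesis rules them out; this is the step where both excluded subgraphs are genuinely required and where the proof cannot be shortened to a one-line application of Fröberg's theorem (noting for instance that $G=C_5$ is $(C_4,2K_2)$-free but $I(C_5)$ itself has no linear resolution, explaining the $s\ge 2$ restriction).
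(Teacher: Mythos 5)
The survey states this theorem without proof, citing Erey \cite{E2019}; the argument there proceeds by induction on $s$ using Banerjee's even-connection description of the colon ideals $(I(G)^{s+1}:m)$ for minimal generators $m$ of $I(G)^s$ --- the same machinery you invoke in your inductive step. Your architecture is therefore the right one, but the technical lemma you propose to prove is not strong enough to close the induction. Banerjee's bound $\reg(I^{s+1}) \le \max\bigl(\{\reg(I^s)\}\cup\{\reg(I^{s+1}:m)+2s\}\bigr)$ requires that each quadratic colon ideal $(I^{s+1}:m)$ have regularity at most $2$, i.e.\ that (after polarizing the squares $u^2$ arising from vertices even-connected to themselves) the associated graph have chordal complement, so that Fr\"oberg's theorem applies. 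Your proposed substitute --- that $(I^{s+1}:m)$ is generated by variables together with the edge ideal of an induced subgraph that is again $(C_4,2K_2)$-free --- does not imply this: as you yourself observe, $C_5$ is $(C_4,2K_2)$-free and $\reg(I(C_5))=3$, so ``variables plus a $(C_4,2K_2)$-free edge ideal'' can have regularity $3$, and the induction would not close. Moreover, the claim that the $2K_2$-free condition makes the quadratic generators disjoint from $\supp(m)$ ``collapse to variables'' is not what happens: even-connection genuinely introduces new quadratic generators $uv$ that are not multiples of any variable in the colon, and the real content of the proof is showing that this \emph{enlarged} graph is co-chordal. That co-chordality statement is the lemma you would have to prove, and it is where both forbidden subgraphs enter.

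Your base case is also shakier than it needs to be. You propose to exhibit linear quotients on $I(G)^2$ directly; the theorem only claims a linear resolution, and whether $I(G)^2$ has linear quotients for every $(C_4,2K_2)$-free graph is a stronger assertion that your sketch does not substantiate --- the colon of one product of two edges into another is not controlled merely by the four bridging pairs between two disjoint edges. The standard route (and the one in \cite{E2019}) handles $s=2$ by the same colon computation with $s=1$, combined with the fact that $2K_2$-free (gap-free) graphs satisfy $\reg(I(G))\le 3$, which feeds into Banerjee's bound to give $\reg(I(G)^2)\le 4$. I would redirect the effort you plan to spend on the linear-quotients ordering toward proving co-chordality of the even-connection graphs.
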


Note that in the above result the ideal $I(G)$ may or may not be componentwise
linear, but its powers are.
Our final result looks at quadratic monomial ideals that
need not be square-free.

\begin{theorem}[{\cite[Theorem 3.2]{HHZ2004}}]
  Let $I$ be a quadratic monomial ideal.  Then $I$ has linear
  powers if and only if $I$ has a linear resolution.
\end{theorem}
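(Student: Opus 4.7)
The ``only if'' direction is immediate on taking $s=1$ in the definition of linear powers. For the nontrivial direction, I want to show: if $I$ is a quadratic monomial ideal with a linear resolution, then $I^s$ has a linear resolution for every $s\geq 1$. Since $I^s$ is generated in the single degree $2s$, it suffices, by Theorem \ref{thm.LQimpliesCWL}, to produce an ordering of the minimal generators of $I^s$ exhibiting linear quotients.

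My plan is to handle the squarefree case first and then to reduce the general quadratic case to it. When $I$ is squarefree, $I$ is the edge ideal $I(G)$ of a graph $G$, and Fr\"oberg's theorem characterizes linearly resolved edge ideals as exactly those for which the complement $G^c$ is chordal. I would fix a perfect elimination ordering $x_1,\dots,x_n$ of $G^c$---so that for each $i$, the $G^c$-neighbors of $x_i$ in $\{x_i,x_{i+1},\dots,x_n\}$ form a clique in $G^c$---and order the minimal generators of $I(G)^s$, each a product $e_{j_1}\cdots e_{j_s}$ of $s$ edges, by a lexicographic rule induced from this vertex order. The key combinatorial claim to verify is that, for any two generators $u<v$ in this order, the colon $\langle u \rangle : \langle v \rangle$ contains some variable $x_k$ with $x_k v/\gcd(u,v)$ already earlier in the order; chordality, via the existence of simplicial vertices in induced subgraphs, is the structural input that produces such an $x_k$. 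For a non-squarefree quadratic $I$, the polarization $I^{\mathrm{pol}}$ is a squarefree quadratic ideal with the same Betti numbers, hence itself linearly resolved, and Theorem \ref{linquot-polar} lets one transfer linear quotients between $I^s$ and $(I^s)^{\mathrm{pol}}$.

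The main obstacle is precisely the combinatorial claim in the squarefree case. Setting up a single ordering that works uniformly for every $s$ and verifying the colon-ideal condition cleanly from chordality is the technical core; moreover $(I^s)^{\mathrm{pol}}$ is not equal to $(I^{\mathrm{pol}})^s$, so the reduction for non-squarefree $I$ requires a parallel combinatorial argument on the polarized object rather than a black-box application of the squarefree result. An alternative route is to apply Theorem \ref{quadraticrees} by producing a monomial order on the defining ideal of $\mathrm{Rees}(I)$ whose initial ideal is generated by quadrics; this trades an $s$-by-$s$ analysis for a single construction on the Rees algebra, but producing such an order for every quadratic monomial ideal with a linear resolution shifts the combinatorial difficulty rather than eliminating it.
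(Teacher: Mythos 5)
This theorem is stated in the survey purely as a citation to \cite{HHZ2004}; no proof is reproduced in the paper, so I am comparing your outline against the published argument. Your easy direction is fine, and you have correctly assembled the relevant ingredients: Fr\"oberg's characterization of quadratic squarefree monomial ideals with linear resolution via chordality of the complement graph, and the reduction of ``linear resolution'' to ``linear quotients'' for an ideal generated in a single degree via Theorem \ref{thm.LQimpliesCWL}. But what you have written is a plan with its two hardest steps left open, and you say so yourself. The entire technical content of the theorem is your ``key combinatorial claim'' that a lexicographic ordering of the degree-$2s$ generators of $I(G)^s$ induced by a perfect elimination ordering of $G^c$ has linear quotients, and this is asserted, not verified. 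It cannot follow from soft considerations: the analogous statement already fails for squarefree monomial ideals generated in degree $3$ (Example \ref{sturmfelsexample}), so quadraticity and chordality of the complement must enter the colon-ideal computation in an essential and uniform-in-$s$ way, and you have not shown how. The second gap you flag is equally real: since $(I^s)^{\rm pol}\neq (I^{\rm pol})^s$, Theorem \ref{linquot-polar} does not reduce the non-squarefree case to the squarefree one, and your proposal offers no substitute argument for the polarized objects that actually arise.

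For comparison, the proof in \cite{HHZ2004} avoids the power-by-power analysis altogether. It works with the defining ideal $J$ of ${\rm Rees}(I)$ and shows that, for a suitable monomial order, the reduced Gr\"obner basis of $J$ consists of elements of degree at most one in the $x$-variables (the ``$x$-condition''); a general lemma of that paper then gives that \emph{every} power of $I$ has linear quotients at once. Chordality enters exactly once, through the existence of simplicial vertices, in verifying the $x$-condition. This is essentially the ``alternative route'' you gesture at in your final sentence but do not pursue; note also that the $x$-condition is not the same hypothesis as the quadratic initial ideal required by Theorem \ref{quadraticrees}, so even that route would need its own Gr\"obner basis computation. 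As it stands, your proposal is a reasonable research plan rather than a proof.
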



\section{Componentwise linearity of symbolic powers}

In this section, we move beyond the Herzog-Hibi-Ohsugi conjecture to address
the question of when \emph{symbolic} powers of an ideal are componentwise
linear. For any arbitrary ideal $I \subseteq R$, the {\it $s$-th symbolic
  power} of $I$ is the ideal
$$I^{(s)} = \bigcap_{P \in {\rm Ass}(I)} (I^sR_P \cap R)$$
where ${\rm Ass}(I)$ is the set of associated primes of $I$ and $R_P$
is the ring $R$ localized at the prime ideal $P$.  In the case that
$I$ is a square-free monomial ideal with primary decomposition
$I = P_1 \cap \cdots \cap P_r$, its $s$-th symbolic power is
given by
$$I^{(s)} = P_1^s \cap \cdots \cap P_r^s.$$
In particular, the $s$-th symbolic power of the cover ideal of a
graph $G$ satisfies
$$J(G)^{(s)} = \bigcap_{\{x_i,x_j\} \in E} \langle x_i,x_j \rangle^s.$$
We survey a number of recent articles focusing on the class of
cover ideals of graphs that have addressed the following
umbrella question:

\begin{question} \label{quest.SP}
For which graphs $G$ is $J(G)^{(s)}$ componentwise linear for all $s \ge 1$?
\end{question}

Going forward, we will employ the following terminology. For a graph $G$,
its \emph{independent complex}, denoted by $\text{Ind}(G)$, is the
simplicial complex whose faces are independent sets in $G$.
A set $A \subseteq V$ is an {\it independent set} if for all
$e \in E$, $e \not\subseteq A$.  Equivalently, $V \setminus A$ is
a vertex cover. It is not
hard to show that $\text{Ind}(G) = \Delta_{J(G)}^\vee$.

\begin{definition}
  A graph $G$ is {\it vertex decomposable}, respectively {\it shellable},
  if its independent complex ${\rm Ind}(G)$ is vertex decomposable,
  respectively shellable.
  \end{definition}

\noindent
Note that by Theorem \ref{vd=>shellable=>scm}, if a graph
$G$ is vertex decomposable or shellable, then $J(G)$ is componentwise
linear.

When $G$ is a bipartite graph, it is known (cf. \cite{GRV2005}) that
$J(G)^s = J(G)^{(s)}$ for all $s \ge 1$, and so Question \ref{quest.SP}
reduces to the question of when regular powers of the cover ideal of a graph
are componentwise linear --- this question has been discussed in the previous
section and is closely related to the Herzog-Hibi-Ohsugi conjecture. In this
case, by combining previous work of Seyed Fakhari \cite{SF2018} and of
Selvaraja and Skelton \cite{SS2021}, one obtains the following result.

\begin{theorem}[{\cite[Theorem 3.6 and Corollary 3.7]{SF2018} and \cite[Theorem 5.3]{SS2021}}] Let $G$ be a bipartite graph, and thus $J(G)^s = J(G)^{(s)}$.
  \begin{enumerate}
  \item The following are equivalent:
	\begin{enumerate}
		\item $J(G)^s$ is componentwise linear for all $s \geq 1$,
		\item $J(G)^s$ is componentwise linear for some $s > 1$,
		\item $J(G)^s$ has linear quotients for all $s \geq 1$,
		\item $G$ is a vertex decomposable graph.
	\end{enumerate}
      \item
        The following are equivalent:
	\begin{enumerate}
		\item $J(G)^s$ has a linear resolution for all $s \geq 1$,
		\item $J(G)$ has a linear resolution,
		\item $G$ is a pure vertex decomposable graph (i.e.,
                  ${\rm Ind}(\Delta) =\Delta^\vee_{J(G)}$ is also a pure simplicial complex)
	\end{enumerate}
        \end{enumerate}
\end{theorem}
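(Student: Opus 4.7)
The plan is to handle the two statements separately, relying throughout on the bipartite equality $J(G)^s = J(G)^{(s)}$ and on polarization as the key technical device.

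For Part (1), the implications (c) $\Rightarrow$ (a) and (a) $\Rightarrow$ (b) are immediate: the first is Theorem \ref{thm.LQimpliesCWL} and the second is tautological. The substance lies in the chain (d) $\Rightarrow$ (c) and (b) $\Rightarrow$ (d). For (d) $\Rightarrow$ (c), the strategy is to exploit the fact that $(J(G)^{(s)})^{\rm pol}$ is the cover ideal of an auxiliary bipartite graph $G^{[s]}$, obtained from $G$ by a duplication/expansion procedure encoded by the polarization. The first step is to verify by induction on $s$ that vertex decomposability of ${\rm Ind}(G)$ lifts to vertex decomposability of ${\rm Ind}(G^{[s]})$, tracking how shedding vertices and the corresponding links and deletions behave under the expansion. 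By Theorem \ref{vd=>shellable=>scm} together with an explicit ordering of the generators of $J(G^{[s]})$ (in the spirit of Erey's and Van Tuyl-Villarreal's orderings in the chordal/vertex decomposable setting), one obtains linear quotients for $J(G^{[s]}) = (J(G)^{(s)})^{\rm pol}$, and Theorem \ref{linquot-polar} then transports linear quotients back to $J(G)^{(s)} = J(G)^s$.

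For (b) $\Rightarrow$ (d), the polarization argument runs in reverse. If $J(G)^s$ is componentwise linear for some $s > 1$, then since componentwise linearity of a monomial ideal transfers to its polarization, the cover ideal $J(G^{[s]})$ is componentwise linear, and Theorem \ref{scmstatement} forces the independence complex ${\rm Ind}(G^{[s]})$ to be sequentially Cohen-Macaulay. The crux of the argument, and the main obstacle, is to extract vertex decomposability of $G$ itself from this information about the expansion. The plan is to exhibit $G$ as an induced subgraph of $G^{[s]}$ (identifying each original vertex with one of its duplicates) in such a way that the sequentially Cohen-Macaulay property descends along the corresponding induced subcomplex. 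Once ${\rm Ind}(G)$ is known to be sequentially Cohen-Macaulay, the theorem of Van Tuyl-Villarreal and Woodroofe that for bipartite graphs the classes \emph{sequentially Cohen-Macaulay}, \emph{shellable}, and \emph{vertex decomposable} coincide immediately delivers (d).

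For Part (2), the implication (a) $\Rightarrow$ (b) is trivial and (b) $\Rightarrow$ (a) is precisely Theorem \ref{bipartitelinear}. The equivalence (b) $\Leftrightarrow$ (c) is then assembled from two classical inputs: Eagon-Reiner (Theorem \ref{eagonreiner}) equates the linear resolution of $J(G)$ with the Cohen-Macaulayness of ${\rm Ind}(G) = \Delta^\vee_{J(G)}$, and the Herzog-Hibi combinatorial characterization of Cohen-Macaulay bipartite graphs identifies the latter with pure vertex decomposability (purity of ${\rm Ind}(G)$ being automatic from Cohen-Macaulayness). This part presents no serious technical obstacle once the cited structural results are invoked.
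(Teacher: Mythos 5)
The paper itself does not prove this theorem; it is quoted from \cite{SF2018} and \cite{SS2021}, and the surrounding machinery (Lemma \ref{lem.SF2018}, Theorem \ref{linquot-polar}, Lemma \ref{lem.VDimpliesCWL}, Theorem \ref{bipartitelinear}) indicates the intended route. Your treatment of the easy implications in Part (1), of (d) $\Rightarrow$ (c) via $\bigl(J(G)^{(s)}\bigr)^{\pol} = J(G_s)$, and of all of Part (2) matches that route; the one caveat in (d) $\Rightarrow$ (c) is that ``vertex decomposability of $G$ lifts to $G_s$'' is the entire technical content of the cited results, and your sketch defers it rather than proves it. That is acceptable for a plan, but it is not a small step.

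The genuine gap is in (b) $\Rightarrow$ (d). You propose to deduce that $\mathrm{Ind}(G)$ is sequentially Cohen--Macaulay from the fact that $\mathrm{Ind}(G_s)$ is, by exhibiting $G$ as an induced subgraph of $G_s$ (on the vertices $x_{i,1}$) and letting the property ``descend along the corresponding induced subcomplex.'' Sequential Cohen--Macaulayness of an independence complex does \emph{not} descend to induced subgraphs: for example, $C_4$ with a whisker added at every vertex is vertex decomposable (hence sequentially Cohen--Macaulay), yet it contains the non--sequentially-Cohen--Macaulay $C_4$ as an induced subgraph. Nor can the copy of $G$ inside $G_s$ be realized as a link of $\mathrm{Ind}(G_s)$: a link at an independent set $F$ equals $\mathrm{Ind}(G_s \setminus N[F])$, and every vertex $x_{i,p}$ with $p \ge 2$ has some level-one vertex $x_{j,1}$ in its neighbourhood, so deleting $N[F]$ for any nonempty $F$ destroys part of the copy of $G$ you want to keep. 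The mechanism that actually works (and is described in the paper's discussion of Theorem \ref{thm.nonCWLSS}) is different: one shows that $G_{s-2}$, not $G$, is isomorphic to $G_s \setminus N[A]$ for a suitable independent set $A$, i.e.\ arises as a link, and links of sequentially Cohen--Macaulay complexes are sequentially Cohen--Macaulay; one then descends $G_s \to G_{s-2} \to \cdots$ down to $G_1 = G$ (or $G_2$, which needs a separate step), and only then invokes the Van Tuyl--Villarreal/Woodroofe equivalence of sequentially Cohen--Macaulay, shellable, and vertex decomposable for bipartite graphs. A secondary point: your claim that componentwise linearity transfers from a monomial ideal to its polarization is true but is not among the tools stated in the paper (Theorem \ref{linquot-polar} covers only linear quotients), so it needs its own justification or citation.
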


For an arbitrary graph $G$, the general approach to investigate symbolic powers of the cover ideal $J(G)$ is to view the polarization of these symbolic powers as the cover ideals of other graphs constructed from $G$.
Particularly, the following constructions, due to Seyed Fakhari \cite{SF2018} and Kumar and Kumar \cite{KK2020}, have proved to be essential in this line of work.

\begin{construction}[Duplicating vertices] \label{const.vertices}
  Let $G$ be a graph over the vertex set $V_G = \{x_1, \dots, x_n\}$
  and let $s \ge 1$ be an integer. We construct a new graph,
  denoted by $G_s$, as follows:
	\begin{align*}
	  V_{G_s} & = \{x_{i,p} ~\big|~ 1 \le i \le n, \ 1 \le p \le s\}, \text{ and } \\
	  E_{G_s} & = \left\{ \{x_{i,p}, x_{j,q}\} ~\big|~ \{x_i, x_j\} \in E_G \text{ and } p+q \le s+1\right\}.
	\end{align*}
\end{construction}

\begin{construction}[Duplicating edges]
	\label{const.edges}
	Let $G$ be a graph with vertex set $V_G = \{x_1, \dots, x_n\}$ and edge set $E_G = \{e_1, \dots, e_m\}$.
	\begin{enumerate}
		\item Let $r \in \ZZ_{\ge 0}$ and $e = \{x_i,x_j\} \in E_G$. Set
		\begin{align*}
			V(e(r)) & = \{x_{l,p} ~\big|~ l \in \{i,j\} \text{ and } 1 \le p \le r\}, \text{ and } \\
			E(e(r)) & = \left\{ \{x_{i,p}, x_{j,q}\} ~\big|~ p+q \le r+1\right\}.
		\end{align*}
	\item For an ordered tuple $(s_1, \dots, s_m) \in \ZZ^m_{\ge 0}$, we construct a new graph, denoted by $G(s_1, \dots, s_m)$, as follows:
	\begin{align*}
		V_{G(s_1, \dots, s_m)} & = \bigcup_{i=1}^m V(e_i(s_i)), \text{ and } \\
		E_{G(s_1, \dots, s_m)} & = \bigcup_{i=1}^m E(e_i(s_i)).
	\end{align*}
	\end{enumerate}
\end{construction}
Obviously, for $s_1 = \dots = s_m = s$, we have $G(s_1, \dots, s_m) = G_s$.
The use of Constructions \ref{const.vertices} and \ref{const.edges} is
reflected in the following lemma.

\begin{lemma}[{\cite[Lemma 3.4]{SF2018}}]
	\label{lem.SF2018}
	Let $G$ be a graph and let $J(G)$ be its cover ideal. For any integer $s \ge 1$, the polarization $\left(J(G)^{(s)}\right)^{\pol}$ coincides with the cover ideal of $G_s$.
\end{lemma}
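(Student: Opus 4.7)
The plan is to verify that $(J(G)^{(s)})^{\pol}$ and $J(G_s)$ have the same set of minimal monomial generators. Since polarization sends the minimal generators of a monomial ideal bijectively to the (square-free) minimal generators of its polarization, and the minimal generators of $J(G_s)$ correspond exactly to the minimal vertex covers of $G_s$, it suffices to set up a bijection between minimal generators of $J(G)^{(s)}$ and minimal vertex covers of $G_s$ that realizes the polarization map.

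First, I would describe the minimal generators of $J(G)^{(s)}$ explicitly using the primary decomposition $J(G)^{(s)}=\bigcap_{\{x_i,x_j\}\in E}\langle x_i,x_j\rangle^s$. A monomial $\x^{\mathbf{a}}=\prod_{i=1}^n x_i^{a_i}$ lies in $J(G)^{(s)}$ if and only if $a_i+a_j\geq s$ for every edge $\{x_i,x_j\}\in E$, and it is a minimal generator precisely when, in addition, for each $i$ with $a_i\geq 1$ there is some neighbor $x_j$ of $x_i$ in $G$ with $a_i+a_j=s$. The polarization of $\x^{\mathbf{a}}$ is $\prod_i\prod_{p=1}^{a_i}x_{i,p}$, which I identify with the subset $W(\mathbf{a})=\{x_{i,p} \mid 1\leq p\leq a_i\}$ of $V_{G_s}$.

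Next I would verify that $W(\mathbf{a})$ is a vertex cover of $G_s$ if and only if $a_i+a_j\geq s$ for every edge of $G$. An edge $\{x_{i,p},x_{j,q}\}$ of $G_s$ (with $\{x_i,x_j\}\in E$ and $p+q\leq s+1$) is covered by $W(\mathbf{a})$ iff $p\leq a_i$ or $q\leq a_j$; if $a_i+a_j\geq s$ and $p\geq a_i+1$, then $q\leq s+1-p\leq s-a_i\leq a_j$, while if $a_i+a_j<s$, the pair $p=a_i+1$, $q=a_j+1$ satisfies $p+q\leq s+1$ yet produces an uncovered edge. A parallel argument, testing removal of the top-index vertex $x_{i,a_i}$ from $W(\mathbf{a})$, shows that $W(\mathbf{a})$ is a \emph{minimal} vertex cover iff $\mathbf{a}$ is a minimal generator of $J(G)^{(s)}$ in the sense described above.

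The main obstacle is the remaining surjectivity step: every minimal vertex cover of $G_s$ must arise as $W(\mathbf{a})$, i.e., the indices of the chosen vertices at each level $x_i$ must form an initial segment $\{1,\ldots,a_i\}$. The key observation is the nested-neighborhood property $N_{G_s}(x_{i,p})\subseteq N_{G_s}(x_{i,p-1})$ for $p\geq 2$: indeed, $x_{i,p}$ is adjacent to $x_{j,q}$ iff $\{x_i,x_j\}\in E$ and $q\leq s+1-p$, whereas the analogous condition for $x_{i,p-1}$ is the weaker $q\leq s+2-p$. Consequently, if $W$ is a minimal vertex cover with $x_{i,p}\in W$ and $x_{i,p-1}\notin W$, every edge incident to $x_{i,p-1}$ must be covered by the other endpoint lying in $W$, and by the nesting, these same endpoints cover all edges incident to $x_{i,p}$, so $W\setminus\{x_{i,p}\}$ remains a vertex cover, contradicting minimality. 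This forces the initial-segment structure and completes the bijection.
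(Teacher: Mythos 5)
The paper does not prove this lemma itself---it is quoted directly from \cite[Lemma 3.4]{SF2018}---so there is no in-paper argument to compare against. Your proof is correct and self-contained, and it follows essentially the argument one finds in Seyed Fakhari's paper: characterize the (minimal) generators of $J(G)^{(s)}$ by the inequalities $a_i+a_j\ge s$ coming from the primary decomposition, match them under polarization with vertex covers of $G_s$, and use the nested-neighborhood property $N_{G_s}(x_{i,p})\subseteq N_{G_s}(x_{i,p-1})$ to show every minimal vertex cover of $G_s$ has the required initial-segment form. All three steps (membership, minimality, surjectivity) are handled correctly, so nothing is missing.
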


\begin{example}
  We illustrate the above ideas by using the graph of Example \ref{runningexamplepic} for $s = 2$.  The graph $G_2$ is then given in Figure
  \ref{graphpicture2}.
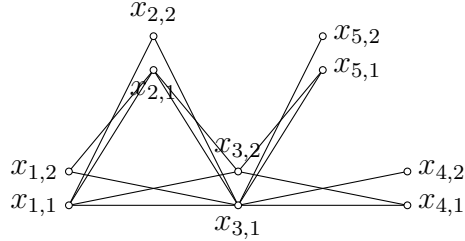
\begin{figure}[!ht]
    \centering
    \begin{tikzpicture}[scale=0.45]
       \draw (0,0) -- (5,1);
       \draw (0,0) -- (5,0);
       \draw (0,0) -- (2.5,5);
       \draw (0,0) -- (2.5,4);
       \draw (2.5,4) -- (5,0);
       \draw (2.5,4) -- (5,1);
       \draw (5,0) -- (7.5,4);
       \draw (5,0) -- (7.5,5);
       \draw (5,0) -- (10,0);
       \draw (5,0) -- (10,1);
       \draw (0,1) -- (5,0);
       \draw (0,1) -- (2.5,4);
       \draw (2.5,5) -- (5,0);
       \draw (5,1) -- (7.5,4);
       \draw (5,1) -- (10,0);

      \fill[fill=white,draw=black] (0,0) circle (.1) node[left]{$x_{1,1}$};
      \fill[fill=white,draw=black] (5,0) circle (.1) node[below]{$x_{3,1}$};
      \fill[fill=white,draw=black] (10,0) circle (.1) node[right]{$x_{4,1}$};
      \fill[fill=white,draw=black] (7.5,4) circle (.1) node[right]{$x_{5,1}$};
      \fill[fill=white,draw=black] (2.5,4) circle (.1) node[below]{$x_{2,1}$};
      \fill[fill=white,draw=black] (0,1) circle (.1) node[left]{$x_{1,2}$};
      \fill[fill=white,draw=black] (5,1) circle (.1) node[above]{$x_{3,2}$};
      \fill[fill=white,draw=black] (10,1) circle (.1) node[right]{$x_{4,2}$};
      \fill[fill=white,draw=black] (7.5,5) circle (.1) node[right]{$x_{5,2}$};
      \fill[fill=white,draw=black] (2.5,5) circle (.1) node[above]{$x_{2,2}$};
     \end{tikzpicture}
    \caption{The graph $G_2$ constructed from the graph $G$ in Figure \ref{graphpicture}}
    \label{graphpicture2}
\end{figure}
Note that $J(G) = \langle x_1x_3,x_2x_3,x_1x_2x_4x_5 \rangle$, so
the ideal $J(G)^{(2)}$ is given by
$$J(G)^{(2)} = \langle x_1^2x_3^2, x_1x_2x_3^2,
x_2^2x_3^2,x_1x_2x_3x_4x_5,x_1^2x_2^2x_4^2x_5^2 \rangle.$$
The polarization of $J(G)^{(2)}$ is then the ideal
\begin{eqnarray*}
(J(G)^{(2)})^{\rm pol} &=& \langle
x_{1,1}x_{1,2}x_{3,1}x_{3,2},x_{1,1}x_{2,1}x_{3,1}x_{3,2},x_{2,1}x_{2,2}x_{3,1}x_{3,2},\\
&& x_{1,1}x_{2,1}x_{3,1}x_{4,1}x_{5,1},
x_{1,1}x_{1,2}x_{2,1}x_{2,2}x_{4,1}x_{4,2}x_{5,1}x_{5,2} \rangle.
\end{eqnarray*}
This ideal then satisfies $(J(G)^{(2)})^{\rm pol} = J(G_2)$.
\end{example}

Lemma \ref{lem.SF2018} fits into the context of studying the
componentwise linearity of symbolic powers of the cover ideal $J(G)$
via the following result. It allows us to, instead of looking at the
componentwise linearity of $J(G)^{(s)}$, consider when $G_s$ is vertex
decomposable, which is a combinatorial property and could be more natural
to examine.

\begin{lemma}
	\label{lem.VDimpliesCWL}
	Let $G$ be a graph and let $s \in \NN$. If $G_s$ is vertex
        decomposable, then $J(G_s)$ has linear quotients.
        Particularly, if $G_s$ is vertex decomposable,
        then $J(G)^{(s)}$ has linear quotients and is componentwise linear.
\end{lemma}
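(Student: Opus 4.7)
The plan is to split the proof into two stages. The first and substantive stage establishes that vertex decomposability of $G_s$ forces the cover ideal $J(G_s)$ to have linear quotients. The second stage then transports this property to $J(G)^{(s)}$ via the polarization identity of Lemma~\ref{lem.SF2018} and deduces componentwise linearity.

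For the first stage I would proceed by induction on the number of vertices of $G_s$. The base case, when $\text{Ind}(G_s)$ is a simplex or empty, is immediate: $J(G_s)$ is either zero, the whole ring, or principal, so the linear quotients property is trivial. For the inductive step, let $x$ be a shedding vertex of $\text{Ind}(G_s)$, which exists by the definition of vertex decomposability. The identifications $\link_{\text{Ind}(G_s)}(x) = \text{Ind}(G_s \setminus N[x])$ and $\del_{\text{Ind}(G_s)}(x) = \text{Ind}(G_s \setminus \{x\})$ show that both $G_s \setminus N[x]$ and $G_s \setminus \{x\}$ are vertex decomposable, so the inductive hypothesis supplies linear quotient orderings $f_1, \dots, f_a$ of the minimal generators of $J(G_s \setminus N[x])$ and $g_1, \dots, g_b$ of the minimal generators of $J(G_s \setminus \{x\})$.

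The minimal generators of $J(G_s)$ now split according to divisibility by $x$. Those not divisible by $x$ correspond to minimal vertex covers that are forced to contain all of $N(x)$, hence take the form $\left(\prod_{y \in N(x)} y\right) f_i$, while those divisible by $x$ take the form $x\, g_j$ (the shedding vertex property is precisely what guarantees that $\{x\} \cup \supp(g_j)$ is a \emph{minimal} vertex cover of $G_s$). I would then propose the concatenated ordering that first lists the ``without $x$'' block in its inherited order, then the ``with $x$'' block in its inherited order. Linear quotients inside each block is inherited from the inductive orderings, since multiplying a whole family of monomials by a common monomial does not alter the relevant colon ideals. The main technical step, which I expect to be the main obstacle, is to verify linear quotients at the boundary between the two blocks: for each $x\, g_j$, the colon ideal of $\langle x\, g_j\rangle$ against the ideal spanned by all previously listed generators must be generated by linear forms. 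The shedding vertex property is exactly what is needed here, because it forces $\supp(g_j)$ to miss some vertex $y_0 \in N(x)$, and that variable $y_0$ can then be shown to lie in the colon ideal by pairing $x\, g_j$ with a suitable generator from the first block. This verification mirrors the shellability/linear quotients arguments in the vertex decomposable literature (see, for example, \cite{DE2009, W2009}).

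For the second stage the argument is brief. By Lemma~\ref{lem.SF2018}, $(J(G)^{(s)})^{\pol} = J(G_s)$. Since $J(G_s)$ has linear quotients by the first stage, Theorem~\ref{linquot-polar} implies that $J(G)^{(s)}$ also has linear quotients. Finally, Theorem~\ref{thm.LQimpliesCWL} yields that $J(G)^{(s)}$ is componentwise linear, as required.
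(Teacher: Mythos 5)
Your second stage is exactly the paper's argument: Lemma \ref{lem.SF2018} identifies $(J(G)^{(s)})^{\pol}$ with $J(G_s)$, Theorem \ref{linquot-polar} transfers linear quotients across polarization, and Theorem \ref{thm.LQimpliesCWL} gives componentwise linearity. The divergence is in the first stage: the paper simply invokes Theorem \ref{vd=>shellable=>scm} (vertex decomposable $\Rightarrow$ shellable, and the shelling order of ${\rm Ind}(G_s)$ dualizes to a linear quotients order on the cover ideal), whereas you attempt to reprove this implication by a direct induction on the vertices.

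That induction contains a genuine error: your concatenated order is backwards. Take $G=G_1=P_3$ with edges $\{x_1,x_2\},\{x_2,x_3\}$, so $J(G)=\langle x_2,\,x_1x_3\rangle$, and take the shedding vertex $x=x_2$. Your first block (generators not divisible by $x$) is $\{x_1x_3\}$ and your second block is $\{x_2\}$; the boundary colon is $\langle x_1x_3\rangle : \langle x_2\rangle = \langle x_1x_3\rangle$, which is not generated by linear forms. Moreover your candidate variable $y_0=x_1\in N(x)\setminus\supp(g_1)$ does not lie in this colon ideal, so the proposed pairing with a first-block generator cannot repair it. The correct order is the reverse: list the generators $xg_j$ (dual to the facets of $\del_{{\rm Ind}(G)}(x)$) first and the generators $\bigl(\prod_{y\in N(x)}y\bigr)f_i$ (dual to the facets containing $x$) second, mirroring the Bj\"orner--Wachs shelling in which the facets of the deletion precede those of the star. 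With that order the boundary step does go through: $N(x)\cup\supp(f_i)$ is a vertex cover of $G\setminus\{x\}$, hence $\bigl(\prod_{y\in N(x)}y\bigr)f_i$ is divisible by some $g_j$, so the variable $x$ itself lies in the colon ideal, and every colon generator contributed by an earlier $xg_j$ is divisible by $x$. The rest of your outline (the split of the minimal vertex covers, the role of the shedding property, the within-block colons) is fine.
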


\begin{proof}
  By Theorem \ref{vd=>shellable=>scm}, we know that if $G_s$ is
  vertex decomposable, then $J(G_s)$ has linear quotients.
  Thus, Lemma \ref{lem.SF2018}
  implies $(J(G^{(s)})^{\rm pol}$ has linear quotients.
  This, together with Theorem \ref{linquot-polar}
  implies that $J(G)^{(s)}$ has linear quotients. The last statement follows from Theorem \ref{thm.LQimpliesCWL}.
\end{proof}

By applying Lemma \ref{lem.VDimpliesCWL}, Seyed Fakhari \cite{SF2018}, Selvaraja \cite{S2000}, and Kumar and Kumar \cite{KK2020} showed that
the following special classes of graphs $G$ have the property that
$J(G)^{(s)}$ is componentwise linear for any $s \ge 1$.

\begin{theorem}
	\label{thm.CWLclasses}
	Let $G$ be a graph.
	\begin{enumerate}
	\item {\cite[Theorem 3.6]{SF2018}} If $G$ is very
          well-covered and $J(G)$ has a linear resolution,
          then $J(G)^{(s)}$ has linear quotients for all $s \ge 1$.
	\item {\cite[Corollary 4.7]{S2020}} If $G$ is a
          Cameron-Walker graph,
          then $J(G)^{(s)}$ has linear quotients for all $s \ge 1$.
	\item {\cite[Theorem 3.4 and Corollary 3.5]{KK2020}} If $G$ is a
          tree on $n$ vertices, then for any tuple
          $(s_1, \dots, s_{n-1}) \in \ZZ^{n-1}_{\ge 0}$, $G(s_1, \dots, s_{n-1})$
          is a vertex decomposable graph.
          Particularly, $J(G)^s = J(G)^{(s)}$ has linear quotients for all
          $s \ge 1$.
	\item {\cite[Corollary 4.5]{KK2020}} If $G$ is a uni-cyclic
          vertex decomposable graph, then
          $J(G)^{(s)}$ is componentwise linear for all $s \ge 1$.
	\end{enumerate}
\end{theorem}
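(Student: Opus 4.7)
The unifying strategy for all four parts is to prove that the auxiliary graph $G_s$ (from Construction \ref{const.vertices}), or the more flexible $G(s_1, \ldots, s_{n-1})$ (from Construction \ref{const.edges}) in the tree case, is vertex decomposable; then Lemma \ref{lem.VDimpliesCWL} together with Lemma \ref{lem.SF2018} gives linear quotients for $J(G)^{(s)}$, and componentwise linearity follows via Theorem \ref{thm.LQimpliesCWL}. Thus each of the four assertions reduces to a purely combinatorial claim about vertex decomposability of a specific graph construction.

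For part (3), I would induct on $n$. Fix a leaf $x_1$ of $G$ adjacent to $x_2$, let $e_1 = \{x_1, x_2\}$ with duplication parameter $s_1$, and set $H := G(s_1, \ldots, s_{n-1})$. By Construction \ref{const.edges}, the unique neighbor of $x_{1, s_1}$ in $H$ is $x_{2,1}$. The plan is to show: (i) $x_{1, s_1}$ is a shedding vertex, by verifying that every maximal independent set of $\del_H(x_{1, s_1})$ contains $x_{2,1}$ and hence is already maximal in $H$; (ii) $\del_H(x_{1, s_1})$ coincides with $G(s_1 - 1, s_2, \ldots, s_{n-1})$, or with the analogous duplication of $G - x_1$ when $s_1 = 1$; and (iii) $\link_H(x_{1, s_1})$, obtained by removing $x_{1, s_1}$ together with $x_{2,1}$, is again isomorphic to a construction of the same type on a strictly smaller parameter tuple on $G - x_1$. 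Both are vertex decomposable by induction. Since trees are bipartite, the identity $J(G)^s = J(G)^{(s)}$ yields the conclusion for regular powers.

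For parts (1), (2), and (4), the method is analogous but exploits class-specific structure to locate a shedding vertex of $G_s$. For very well-covered graphs I would invoke Favaron's perfect-matching decomposition together with the rigidity imposed by $J(G)$ being linearly resolved, and peel off shedding vertices along the matching. For Cameron-Walker graphs I would appeal to the shape classification of \cite{HHKO2015} (star, pendant-triangle, or bipartite base with attached pendant edges and pendant triangles) and argue case by case, choosing a shedding vertex from the attached pendant structure. For uni-cyclic vertex decomposable graphs I would lift a shedding vertex of $G$ itself (whose existence is guaranteed by the hypothesis) to $G_s$, and then show that after deletion of this vertex and its copies the cycle is broken, so the remaining instance is a duplication of a forest, which is handled by (3).

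The principal obstacle, common to all four parts, is closing the induction: one must verify that the deletion and the link of the chosen shedding vertex in $G_s$ again arise as $H_t$ (or $H(t_1, \ldots, t_k)$) for a strictly smaller instance within the same class. For trees this is transparent because leaf removal commutes neatly with edge duplication. In the other three cases the bookkeeping is considerably more delicate, because the structural invariants (the very-well-covered decomposition together with the linear-resolution condition, the Cameron-Walker shape, and uni-cyclicity plus vertex decomposability) must all be preserved under the reduction, and the additional edges created by duplication must be reconciled with these invariants.
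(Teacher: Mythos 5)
Your reduction of all four parts to the vertex decomposability of $G_s$ (or of $G(s_1,\dots,s_{n-1})$) via Lemma \ref{lem.SF2018}, Theorem \ref{linquot-polar} and Lemma \ref{lem.VDimpliesCWL} is exactly the strategy this survey attributes to \cite{SF2018}, \cite{S2020} and \cite{KK2020}; the paper gives no proof beyond that reduction and the citations, so at the level of architecture you are aligned with the sources. (One caveat: in part (4) the recorded conclusion is only componentwise linearity, not linear quotients, which already suggests the unicyclic case does not run through the vertex-decomposability machine quite as uniformly as you propose.)

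However, the one place where you commit to details --- the tree case (3) --- contains a genuine error. You claim that $x_{1,s_1}$, whose unique neighbour in $H=G(s_1,\dots,s_{n-1})$ is $x_{2,1}$, is a shedding vertex because ``every maximal independent set of $\del_H(x_{1,s_1})$ contains $x_{2,1}$.'' This is false. Take $G$ to be the single edge $\{x_1,x_2\}$ with $s_1=2$: then $H$ is the path $x_{2,2}-x_{1,1}-x_{2,1}-x_{1,2}$, and $\{x_{1,1}\}$ is a facet of $\del_H(x_{1,2})$ (it is maximal independent in $H-x_{1,2}$) that does not contain $x_{2,1}$ and is not a facet of $\mathrm{Ind}(H)$, since $\{x_{1,1},x_{1,2}\}$ is independent. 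The same failure occurs already for $P_3$ with $s_1=s_2=1$: a degree-one vertex is essentially never a shedding vertex; its neighbour is. The standard repair is to observe that $N_H[x_{1,s_1}]=\{x_{1,s_1},x_{2,1}\}\subseteq N_H[x_{2,1}]$, so $x_{2,1}$ is a shedding vertex. This forces you to redo items (ii) and (iii): one must identify $H-x_{2,1}$ and $H-N_H[x_{2,1}]$ with smaller instances of Construction \ref{const.edges}, noting that $x_{2,1}$ is adjacent to \emph{every} copy of \emph{every} $G$-neighbour of $x_2$, so the link leaves behind isolated copies $x_{2,2},\dots$ whose (harmless) effect on vertex decomposability must be recorded. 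Parts (1), (2) and (4) remain at the level of intentions; as you yourself concede, the reductions there are precisely where the work of the cited papers lies, so the proposal should be read as a correct identification of the proof architecture rather than a proof, and its one worked example needs the fix above.
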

\noindent
In the above statement, a graph is {\it uni-cyclic} if the graph
has only one induced cycle, and  a graph $G$
is {\it very well-covered} if all of its maximal vertex covers
have cardinality $\frac{1}{2}|V|$.

Seyed Fakhari \cite{SF2021} improved his previous result \cite[Theorem 3.6]{SF2018} (see Theorem \ref{thm.CWLclasses} (1)). In particular, he classified all the graphs whose symbolic powers have a linear resolution.

  \begin{theorem}[{\cite[Theorem 3.4]{SF2021}}] Let
    $G$ be a graph with no isolated vertices.  Then the following
    are equivalent:
    \begin{enumerate}
    \item $J(G)^{(s)}$ has a linear resolution for all $s \geq 1$,
    \item $J(G)^{(s)}$ has a linear resolution for some $s \geq 1$, and
    \item $G$ is very well-covered and  ${\rm Ind}(G)$ is Cohen-Macaulay.
    \end{enumerate}
    \end{theorem}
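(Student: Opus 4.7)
The implications $(1) \Rightarrow (2)$ is immediate by taking $s=1$. The implication $(3) \Rightarrow (1)$ is essentially Theorem \ref{thm.CWLclasses}(1) combined with the fact that very well-coveredness forces all minimal generators of every symbolic power to be in a single degree. Precisely: assuming $G$ is very well-covered with $\text{Ind}(G)$ Cohen--Macaulay, Theorem \ref{eagonreiner} gives that $J(G)$ has a linear resolution, and then Theorem \ref{thm.CWLclasses}(1) says $J(G)^{(s)}$ has linear quotients for every $s\geq 1$. To upgrade linear quotients to a linear resolution I would verify that every minimal generator of $J(G)^{(s)}$ has degree exactly $s\cdot \frac{|V(G)|}{2}$. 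This degree equality follows because, for a very well-covered graph, the polarization $J(G_s)$ is the cover ideal of $G_s$ and all minimal vertex covers of $G_s$ have the same size $\frac{|V(G_s)|}{2}=\frac{s|V(G)|}{2}$; depolarizing preserves the total degree of each generator.

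The heart of the proof is $(2) \Rightarrow (3)$. Fix $s$ so that $J(G)^{(s)}$ has a linear resolution. Since polarization preserves graded Betti numbers, Lemma \ref{lem.SF2018} gives that $J(G_s)$ has a linear resolution too, and Theorem \ref{eagonreiner} then yields that $\text{Ind}(G_s)$ is Cohen--Macaulay. My plan is to use this information to pull back to $G$ in two stages: first conclude very well-coveredness, then Cohen--Macaulayness of $\text{Ind}(G)$.

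For very well-coveredness: I would use the purity coming from Cohen--Macaulayness of $\text{Ind}(G_s)$, which forces all maximal independent sets of $G_s$ to have the same size. The combinatorics of Construction \ref{const.vertices} is such that any maximal independent set $F\subseteq V(G)$ in $G$ lifts to a natural maximal independent set of $G_s$ (taking all duplicates $x_{i,p}$ for $x_i\in F$, plus a ``triangular'' selection of duplicates for some vertices outside $F$). Writing out the sizes of these lifts in terms of $|F|$ and $s$, and equating them for different $F$, forces $|F|$ to be constant and equal to $|V(G)|/2$ — this gives both well-coveredness of $G$ and the very well-covered condition $|V(G)|=2\cdot |F|$. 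The hard part here is showing that \emph{every} maximal independent set of $G_s$ arises in this controlled way, so the equality of their cardinalities really does translate into a constraint on $G$; I would handle this via a vertex-by-vertex exchange argument on the index $p$ in the duplicated labels.

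For Cohen--Macaulayness of $\text{Ind}(G)$ I would exploit that $G_1 = G$, reducing to the specialization step $G_s \leadsto G$. Once very well-coveredness is established, the symbolic Rees structure of $J(G)$ collapses: the minimal generators of $J(G)^{(s)}$ coincide with $s$-fold sums of the minimal vertex covers, all of degree $s|V(G)|/2$. This means the linear resolution of $J(G)^{(s)}$ forces, via the graded Betti number comparison between $J(G)^{(s)}$ and $\langle J(G)^{(s)}_{s|V(G)|/2}\rangle$, the linearity of $J(G)$ itself; applying Eagon--Reiner once more yields that $\text{Ind}(G)$ is Cohen--Macaulay. The main obstacle throughout is the combinatorial analysis of maximal independent sets of $G_s$, where the doubled-index structure must be unpacked carefully to extract information about the original graph $G$; everything else is essentially an application of Eagon--Reiner, polarization, and the previously established very well-covered case.
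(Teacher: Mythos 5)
First, a caveat: the survey does not prove this statement — it is quoted directly from Seyed Fakhari's paper \cite{SF2021} — so there is no in-paper proof to compare yours against; I can only judge the proposal on its own terms. Your outline for $(1)\Rightarrow(2)$ and $(3)\Rightarrow(1)$ is reasonable: condition $(3)$ gives a linear resolution of $J(G)$ by Theorem \ref{eagonreiner}, Theorem \ref{thm.CWLclasses}(1) then gives linear quotients for every $J(G)^{(s)}$, and linear quotients plus generation in a single degree yields a linear resolution. Even here, though, you still owe a proof that very well-coveredness of $G$ forces all minimal vertex covers of $G_s$ to have size $s|V(G)|/2$ (equivalently, that $G_s$ is again very well-covered); this is true but is a genuine lemma, not a remark.

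The real gap is in $(2)\Rightarrow(3)$, in two places. First, the claim that for a very well-covered graph ``the minimal generators of $J(G)^{(s)}$ coincide with $s$-fold sums of the minimal vertex covers'' is false: it amounts to $J(G)^{(s)}=J(G)^s$, which fails for non-bipartite very well-covered graphs. For instance, let $G$ be the triangle with a whisker at each vertex, $V=\{x_1,x_2,x_3,y_1,y_2,y_3\}$; then $G$ is very well-covered, but $x_1x_2x_3y_1y_2y_3$ is a minimal generator of $J(G)^{(2)}$ that is not a product of two minimal vertex covers (every minimal vertex cover contains at least two of the $x_i$, so every product of two of them has $x$-degree at least $4$, while this monomial has $x$-degree $3$). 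Second, and more seriously, the passage from ``$J(G)^{(s)}$ has a linear resolution for one $s\ge 2$'' to ``$J(G)$ has a linear resolution'' is precisely the hard content of the equivalence of $(2)$ with $(1)$ and $(3)$, and your proposed ``graded Betti number comparison between $J(G)^{(s)}$ and $\langle J(G)^{(s)}_{s|V(G)|/2}\rangle$'' is vacuous once the ideal is generated in a single degree (the two ideals coincide), and in any case says nothing about the Betti numbers of the different ideal $J(G)$. What is actually needed is a mechanism for transferring the Cohen--Macaulayness of ${\rm Ind}(G_s)$ back to ${\rm Ind}(G)={\rm Ind}(G_1)$ — for example, realizing ${\rm Ind}(G)$, up to coning, as the link of a suitable face of ${\rm Ind}(G_s)$, or invoking the structure theory of Cohen--Macaulay very well-covered graphs — and this is also where your deferred ``vertex-by-vertex exchange argument'' for purity would have to be carried out rather than asserted. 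In short, the proposal identifies the right reductions (polarization, Lemma \ref{lem.SF2018}, Eagon--Reiner, Theorem \ref{thm.CWLclasses}(1)) but leaves the two essential combinatorial transfers from $G_s$ to $G$ unproved, and props one of them up with a false claim.
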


In a different approach, Selvaraja and Skelton \cite{SS2021} gave the following sufficient condition for $J(G)^{(s)}$ to fail componentwise linearity for all $s \ge 1$.

\begin{theorem}[{\cite[Theorem 3.1]{SS2021}}]
	\label{thm.nonCWLSS}
	Let $G$ be graph, and suppose that $J(G)^{(s)}$ is not componentwise
	linear for either $s=1$ and $2$, or $s=2$ and $3$.  Then
	$J(G)^{(s)}$ is not componentwise linear for all $s \geq 2$.
\end{theorem}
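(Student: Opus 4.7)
The plan is to establish the theorem by induction on $s$, using the following inductive claim: for every $s \ge 2$, if $J(G)^{(s-1)}$ and $J(G)^{(s)}$ both fail to be componentwise linear, then so does $J(G)^{(s+1)}$. Granted this claim, the two hypotheses of the theorem each supply the requisite base pair (either $s=1,2$ or $s=2,3$), and repeated application of the inductive step yields non-componentwise-linearity of $J(G)^{(s)}$ for every $s \ge 2$.

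To establish the inductive claim, I would work through the polarizations $J(G_t) = (J(G)^{(t)})^{\rm pol}$ provided by Lemma \ref{lem.SF2018}. Since polarization preserves componentwise linearity (parallel to Theorem \ref{linquot-polar} for linear quotients), the claim becomes: if $J(G_{s-1})$ and $J(G_s)$ are not componentwise linear, then neither is $J(G_{s+1})$. Via Theorem \ref{scmstatement}, this is equivalent to asserting that failure of sequential Cohen--Macaulayness of $\text{Ind}(G_{s-1})$ and $\text{Ind}(G_s)$ forces failure of sequential Cohen--Macaulayness of $\text{Ind}(G_{s+1})$. The natural route is to embed $G_{s-1}$ or $G_s$ (or a carefully chosen induced substructure carrying the homological obstruction) as an induced subgraph of $G_{s+1}$: the cover ideal of an induced subgraph $H'$ of a graph $H$ is recovered by setting the variables outside $V_{H'}$ to $1$ in $J(H)$, and this specialization preserves componentwise linearity, so a witness to the failure of componentwise linearity in $J(G_{s-1})$ or $J(G_s)$ would lift to one in $J(G_{s+1})$.

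The main obstacle is that the edge condition $p+q \le t+1$ defining $G_t$ varies with $t$, so a naive restriction of $G_{s+1}$ to a shifted vertex set $\{x_{i,p} : a \le p \le a+t\}$ does not reproduce $G_t$ as an induced subgraph on the nose; one generally ends up with a proper subgraph instead. The crux of the proof is therefore to locate a genuine induced subgraph of $G_{s+1}$ — perhaps drawn from the more flexible Construction \ref{const.edges} with carefully chosen parameters $(s_1,\ldots,s_m)$, or produced by iterated link/deletion operations on $\text{Ind}(G_{s+1})$ — that witnesses the failure of sequential Cohen--Macaulayness at the level of a specific non-vanishing Betti number $\beta_{i,j}(\langle J(G_t)_d\rangle)$ with $j > i+d$. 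Once such a subgraph is identified, the final step is to argue that the obstructions carried by $G_{s-1}$ and $G_s$ cannot simultaneously be cancelled in $G_{s+1}$, so sequential Cohen--Macaulayness of $\text{Ind}(G_{s+1})$ would force it on the subcomplex in question, contradicting the inductive hypothesis.
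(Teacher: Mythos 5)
There is a genuine gap, and it sits exactly where you locate ``the crux of the proof'': the mechanism you propose for transferring the failure of componentwise linearity from $G_{s-1}$ or $G_s$ into $G_{s+1}$ rests on a false preservation principle. Passing to an induced subgraph of a graph $H$ (equivalently, setting the variables outside $V_{H'}$ to $1$ in $J(H)$) does \emph{not} preserve componentwise linearity of cover ideals, so a ``witness to failure'' in an induced subgraph does not lift to the ambient graph. Concretely, let $H$ be the graph obtained from $C_4$ by attaching a whisker at every vertex: then $J(H)$ is componentwise linear (Theorem \ref{thm.DHNT}, already at $s=1$ this is Villarreal's result), yet $C_4$ is an induced subgraph of $H$ and $J(C_4)$ is not componentwise linear, since $\mathrm{Ind}(C_4)$ is a pure, disconnected one-dimensional complex and hence not sequentially Cohen--Macaulay. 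At the level of simplicial complexes the issue is that an induced subgraph corresponds to a \emph{restriction} of the independence complex, and sequential Cohen--Macaulayness is not inherited by restrictions. So even if you succeeded in embedding $G_{s-1}$ (or some obstruction-carrying piece of it) as an induced subgraph of $G_{s+1}$, this would prove nothing.

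The operation that does behave well is removal of closed neighbourhoods: for an independent set $A$ one has $\mathrm{Ind}(G \setminus N[A]) = \mathrm{link}_{\mathrm{Ind}(G)}(A)$, and links of sequentially Cohen--Macaulay complexes are sequentially Cohen--Macaulay. This is the tool the actual proof uses (and it is the same tool underlying Theorem \ref{thm.nonCWL_B}): one shows that for a suitable independent set $A$ in $G_s$, the graph $G_s \setminus N[A]$ is isomorphic to $G_{s-2}$. Combined with Lemma \ref{lem.SF2018} and Theorem \ref{scmstatement}, this yields the descent ``$J(G)^{(s)}$ componentwise linear $\Rightarrow$ $J(G)^{(s-2)}$ componentwise linear,'' whose contrapositive, applied along each parity class, is exactly why the hypothesis requires the two base pairs $s=1,2$ or $s=2,3$. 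Note that this mechanism steps down by $2$ using a \emph{single} hypothesis, whereas your inductive claim steps by $1$ using two hypotheses; you give no argument for how the obstructions from $G_{s-1}$ and $G_s$ would jointly interact inside $G_{s+1}$ (the final sentence of your proposal is an assertion, not a proof), and indeed no such interaction is needed once the correct $N[A]$-removal identity is in hand.
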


The strategy to prove Theorem \ref{thm.nonCWLSS} is to use Seyed Fakhari's construction of $G_s$, and then show that in the inductive hypothesis of $G_s$ being vertex decomposable, a subgraph obtained from $G_s$ by removing the neighbours of a shedding vertex is isomorphic to $G_{s-2}$.

In the same spirit, finding conditions so that $J(G)^{(s)}$ fails to be componentwise linear for all $s \ge 1$, Selvaraja and Skelton \cite{SS2021} gave the following result. Note that for a vertex decomposable graph $G$ with a shedding sequence $x_{\alpha(1)}, \dots, x_{\alpha(l)}$, let $\{x_{\gamma(1)}, \dots, x_{\gamma(r)}\}$ be the collection of isolated vertices remaining in $G \setminus \{x_{\alpha(1)}, \dots, x_{\alpha(l)}\}$. The \emph{spanning bipartite graph} $\B_G$ is defined to be the bipartite graph with the bipartition of the vertices $\{x_{\alpha(1)}, \dots, x_{\alpha(l)}\} \cup \{x_{\gamma(1)}, \dots, x_{\gamma(r)}\}$ and edges
$$\{\{x_{\alpha(i)},x_{\gamma(j)}\} ~\big|~ \{x_{\alpha(i)},x_{\gamma(j)}\} \in E_G, 1 \le i \le l \text{ and } 1 \le j \le r\}.$$

\begin{theorem}[{\cite[Theorem 3.6]{SS2021}}]
	\label{thm.nonCWL_B}
	Let $G$ be a vertex decomposable graph.  If there
	exists an independent set $A$ such that
	$\mathcal{B}_{G\setminus N[A]}$ is not vertex decomposable,
	then $J(G)^{(s)}$ is not componentwise linear for all $s \geq 2$.
\end{theorem}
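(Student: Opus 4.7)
The plan is to pass from symbolic powers to the independence complex of the duplicated graph $G_s$ and, via iterated links, isolate a subcomplex related to $(\mathcal{B}_{G\setminus N[A]})_s$ whose failure of sequential Cohen-Macaulayness will force the failure for the ambient complex. By Lemma \ref{lem.SF2018} and the invariance of graded Betti numbers under polarization, $J(G)^{(s)}$ is componentwise linear if and only if $J(G_s)$ is, which by Theorem \ref{scmstatement} is equivalent to $\text{Ind}(G_s)$ being sequentially Cohen-Macaulay. It therefore suffices to prove, under the hypothesis, that $\text{Ind}(G_s)$ is not sequentially Cohen-Macaulay for every $s\ge 2$.

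First I would lift $A$ to an independent set $A_s := \{x_{i,s} \mid x_i \in A\}$ of $G_s$. Since $s+s>s+1$ when $s\ge 2$, the set $A_s$ is independent, and its closed neighborhood in $G_s$ is $A_s\cup\{x_{j,1} \mid x_j \in N_G(A)\}$. Because $G\setminus N[A]$ is vertex decomposable --- obtained as the iterated link of $A$ inside the vertex decomposable complex $\text{Ind}(G)$ --- it admits a shedding sequence $x_{\alpha(1)},\dots,x_{\alpha(\ell)}$ that leaves behind the isolated vertices $x_{\gamma(1)},\dots,x_{\gamma(r)}$, which form the bipartition of $\mathcal{B}_{G\setminus N[A]}$. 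I would then extend $A_s$ by adjoining appropriate copies of the $x_{\alpha(j)}$'s one at a time, repeatedly taking links inside $\text{Ind}(G_s)$. The combinatorial crux of the argument is to verify that after this process the remaining complex is exactly
\[
	\text{Ind}\bigl((\mathcal{B}_{G\setminus N[A]})_s\bigr),
\]
so that all non-bipartite edges of $G_s$ have been systematically eliminated by the adjunction of the top copies of the shedding vertices.

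I would then invoke two facts. First, sequential Cohen-Macaulayness passes to links of faces of a simplicial complex; hence if $\text{Ind}(G_s)$ were sCM then so would this terminal link be. Second, by Lemma \ref{lem.SF2018} the complex $\text{Ind}\bigl((\mathcal{B}_{G\setminus N[A]})_s\bigr)$ is the Alexander dual of $J(\mathcal{B}_{G\setminus N[A]})^{(s)}$, and bipartiteness of $\mathcal{B}_{G\setminus N[A]}$ makes its symbolic and ordinary powers coincide. Componentwise linearity of $J(\mathcal{B}_{G\setminus N[A]})^s$ for any $s>1$ would then, by the bipartite equivalence of Seyed Fakhari and Selvaraja-Skelton recalled at the start of this section, force $\mathcal{B}_{G\setminus N[A]}$ itself to be vertex decomposable, contradicting the hypothesis. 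This contradiction completes the argument.

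The main obstacle, as expected, is the combinatorial identification of the iterated link as $\text{Ind}\bigl((\mathcal{B}_{G\setminus N[A]})_s\bigr)$. One must specify precisely which copies of $x_{\alpha(j)}$ to adjoin and in what order so that (i) the adjoined set remains independent at every stage and (ii) after all adjunctions exactly the non-bipartite edges of $G_s$ have been removed while all duplicated bipartite edges predicted by Construction \ref{const.vertices} applied to $\mathcal{B}_{G\setminus N[A]}$ survive. A careful induction, keyed on the shedding sequence of $G\setminus N[A]$ and on the copy index, should carry this out.
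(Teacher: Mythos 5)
The survey you are working from does not actually reproduce a proof of this theorem --- it only cites \cite[Theorem 3.6]{SS2021} and describes the general strategy (polarization, the graphs $G_s$, and shedding arguments). Your overall frame is the right one in spirit: reduce via Lemma \ref{lem.SF2018} and Theorem \ref{scmstatement} to the sequential Cohen--Macaulayness of $\mathrm{Ind}(G_s)$, use that links of faces preserve sequential Cohen--Macaulayness, and invoke the bipartite classification to convert non-vertex-decomposability of $\mathcal{B}_{G\setminus N[A]}$ into a contradiction. However, the step you yourself flag as the crux is not merely unverified --- it is false as stated. Taking the link of a face $F$ in $\mathrm{Ind}(G_s)$ removes the closed neighborhood $N_{G_s}[F]$; it can never delete an edge of $G_s$ while retaining both of its endpoints. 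The graph $(\mathcal{B}_{G\setminus N[A]})_s$ contains all $s$ copies of every shedding vertex but omits the duplicated edges between copies of two \emph{adjacent} shedding vertices. If $x_{\alpha(i)}$ and $x_{\alpha(j)}$ are adjacent in $G\setminus N[A]$, then their level-one copies $x_{\alpha(i),1}$ and $x_{\alpha(j),1}$ are adjacent in $G_s$ and must both survive for the target to be reached, so that edge survives in every such link --- yet it is absent from $(\mathcal{B}_{G\setminus N[A]})_s$. Concretely, for $G=K_3$ on $a,b,c$ with $A=\emptyset$, shedding sequence $a,b$ and remainder $\{c\}$, the complex $\mathrm{Ind}\bigl((\mathcal{B}_{K_3})_2\bigr)$ has six vertices and no cone points, so it is not the link of any nonempty face of $\mathrm{Ind}\bigl((K_3)_2\bigr)$ (which has only six vertices in total), and it is not the full complex either since $(K_3)_2$ has nine edges while $(\mathcal{B}_{K_3})_2$ has six. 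So no ``careful induction'' can realize $\mathrm{Ind}\bigl((\mathcal{B}_{G\setminus N[A]})_s\bigr)$ as an iterated link of $\mathrm{Ind}(G_s)$, and the argument must instead work directly with the (non-)vertex decomposability of $G_s$ and track where a shedding sequence is forced to terminate, which is how the cited proof proceeds.

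Two smaller points. Your justification that $A_s$ is independent (``$s+s>s+1$'') is beside the point: copies of non-adjacent vertices of $G$ are never adjacent in $G_s$, so independence of $A$ already suffices. And the equivalence ``$J(G)^{(s)}$ is componentwise linear if and only if $J(G_s)$ is'' needs an explicit reference: the survey only supplies the linear-quotients version (Theorem \ref{linquot-polar}) and the one-directional Lemma \ref{lem.VDimpliesCWL}, whereas your contrapositive requires that componentwise linearity descends from $J(G)^{(s)}$ to its polarization, a true but separate fact about polarization that you should cite rather than attribute to ``invariance of graded Betti numbers'' alone.
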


The necessary condition in Theorem \ref{thm.nonCWL_B} is also sufficient to achieve the componentwise linearity of $J(G)^{(s)}$, for all $s \ge 1$, for a special class of vertex decomposable graphs, namely, the class of $W$-graphs. Selvaraja and Skelton \cite{SS2021} defined a \emph{$W$-graph} $G$ to be graph such that $G \setminus N[A]$ has a simplicial vertex for any independent set $A$.
A vertex $x$ is a {\it simplicial vertex} if the induced
graph on $x$ and all of its neighbors is a complete graph.

\begin{theorem}[{\cite[Theorem 4.2]{SS2021}}] Let $G$ be a $W$-graph.  Then
	the following are equivalent:
	\begin{enumerate}
		\item $\mathcal{B}_{G \setminus N[A]}$ is vertex decomposable
		for any independent set $A$ in $G$.
		\item $J(G)^{(s)}$ is componentwise linear for all $s \geq 1$.
		\item $J(G)^{(s)}$ is componentwise linear for some $s \geq 2$.
	\end{enumerate}
\end{theorem}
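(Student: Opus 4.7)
The plan is to establish the cyclic chain $(2) \Rightarrow (3) \Rightarrow (1) \Rightarrow (2)$. The implication $(2) \Rightarrow (3)$ is immediate. For $(3) \Rightarrow (1)$ I would argue by contrapositive using Theorem \ref{thm.nonCWL_B}, which presupposes that $G$ is vertex decomposable. I would therefore first establish, as a preliminary, that every $W$-graph is vertex decomposable: this should follow from the fact that simplicial vertices are shedding vertices of the independence complex (any maximal independent set avoiding a simplicial $x$ must contain a neighbour of $x$, since $x$ together with its neighbours forms a clique) together with an induction on $|V(G)|$ exploiting that the class of $W$-graphs is closed under $G \mapsto G \setminus N[x]$ (for any independent set $A'$ in $G \setminus N[x]$, the union $A' \cup \{x\}$ is independent in $G$, so $(G \setminus N[x]) \setminus N[A'] = G \setminus N[A' \cup \{x\}]$ has a simplicial vertex by hypothesis). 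Once $G$ is known vertex decomposable, a failure of $(1)$ would produce an independent set $A$ with $\B_{G \setminus N[A]}$ not vertex decomposable, and Theorem \ref{thm.nonCWL_B} would then contradict $(3)$.

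The main work lies in $(1) \Rightarrow (2)$. Combining Lemma \ref{lem.SF2018}, Theorem \ref{linquot-polar}, and Lemma \ref{lem.VDimpliesCWL}, it suffices to prove that the duplicated graph $G_s$ is vertex decomposable for every $s \geq 1$. My plan is induction on $s$, with the base case $s = 1$ handled by the preliminary fact above. For the inductive step, I would select a simplicial vertex $x$ of $G$ supplied by the $W$-graph condition with $A = \emptyset$, and attempt to use a duplicate copy of $x$ as a shedding vertex of $\mathrm{Ind}(G_s)$. Construction \ref{const.vertices} gives the restricted neighbourhood $N_{G_s}(x_{i,p}) = \{x_{j,q} : \{x_i,x_j\} \in E_G,\ p + q \leq s + 1\}$, so the link of $x_{i,p}$ in $\mathrm{Ind}(G_s)$ should reduce to the independence complex of a duplicated version of the smaller $W$-graph $G \setminus N[x]$, vertex decomposable by induction on $|V(G)|$ (using that hypothesis $(1)$ is inherited by $G \setminus N[x]$ since independent sets in it are independent in $G$). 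The deletion should be accessible by descending in $s$ via the shedding moves encoded in the spanning bipartite graph $\B_{G \setminus N[A]}$; hypothesis $(1)$ supplies exactly the vertex decomposability of these bipartite graphs needed for such a descent.

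The principal obstacle will be making the identification of the deletion of a duplicate vertex in $G_s$ with a combinatorial object accessible to the induction, since $G_s$ minus a single duplicate is not literally of the form $G_{s-1}$ or $(G')_s$. The technical crux will be to verify that $\B_{G \setminus N[A]}$ encodes precisely the shedding moves that allow the deletion step to be matched with an independence complex to which the induction on $s$ applies, and to check that hypothesis $(1)$ is inherited by every subgraph $G \setminus N[A']$ appearing in the argument. I expect a full proof to devote most of its length to this coordinated bookkeeping between shedding sequences in $G_s$ and in the various $G \setminus N[A]$, together with the translation between duplication in $G_s$ and deletion-plus-neighbourhood-removal in $G$.
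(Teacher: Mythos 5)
First, a point of context: the survey does not reprove this result; it is quoted from \cite[Theorem 4.2]{SS2021}, so your proposal can only be assessed on its own terms and against the strategy the survey attributes to that paper. Your cyclic architecture $(2)\Rightarrow(3)\Rightarrow(1)\Rightarrow(2)$, with $(3)\Rightarrow(1)$ obtained from the contrapositive of Theorem \ref{thm.nonCWL_B}, is reasonable. However, your preliminary lemma is false as stated: a simplicial vertex need not be a shedding vertex of the independence complex. Take $G=P_3$ with vertices $x-y-z$; this is a $W$-graph and $x$ is simplicial, but $\{z\}$ is a facet of $\del_{{\rm Ind}(G)}(x)={\rm Ind}(G\setminus x)$ that is not a facet of ${\rm Ind}(G)$, so $x$ is not a shedding vertex. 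Your parenthetical argument conflates maximal independent sets of $G$ (which must dominate $x$) with maximal independent sets of $G\setminus x$ (which need not). The correct statement, due to Woodroofe \cite{W2009}, is that a \emph{neighbour} $w$ of a simplicial vertex $v$ is a shedding vertex, since $N[v]\subseteq N[w]$. But then the induction proving that $W$-graphs are vertex decomposable must also handle the deletion ${\rm Ind}(G\setminus w)$, and the class of $W$-graphs is closed under $G\mapsto G\setminus N[A]$ but not obviously under plain vertex deletion $G\mapsto G\setminus w$. So even the base case of your program --- every $W$-graph is vertex decomposable, which you need both for $s=1$ and in order to invoke Theorem \ref{thm.nonCWL_B} at all --- is not established by the argument you give.

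Second, the heart of the theorem is $(1)\Rightarrow(2)$, and there the proposal stops exactly where the work begins. You yourself flag that the deletion of a duplicate vertex in $G_s$ is not of the form $G_{s-1}$ or $(G')_s$ and call matching it with an object accessible to the induction ``the technical crux,'' but no mechanism is supplied for this, nor for how hypothesis $(1)$ (vertex decomposability of the graphs $\mathcal{B}_{G\setminus N[A]}$) actually enters the shedding argument. The link computation is also not quite as you describe: $N_{G_s}(x_{i,p})$ omits the copies $x_{j,q}$ of neighbours with $q>s+1-p$ as well as every other copy of $x_i$ itself, so only for $p=1$ does the link reduce cleanly to ${\rm Ind}\bigl((G\setminus N_G[x_i])_s\bigr)$ up to cone points. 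Finally, the descent you envision (by one in $s$) does not match the mechanism the survey reports for \cite{SS2021}, namely that removing the closed neighbourhood of a suitable shedding vertex from $G_s$ yields a graph isomorphic to $G_{s-2}$ --- a descent by two, with the spanning bipartite graph controlling the deletion side. As it stands, the two load-bearing steps of the proposal (vertex decomposability of $W$-graphs, and the decomposition of ${\rm Ind}(G_s)$) are respectively incorrect and missing.
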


In addressing Question \ref{quest.SP} and identifying new classes of
graphs for which all symbolic powers of the cover ideal are
componentwise linear, the following approach has been
investigated: combinatorially modify a given graph $G$ to obtain a
new graph $G'$ with the required property that $J(G')^{(s)}$ is
componentwise linear for any $s \ge 1$. Specifically, originating
from Villarreal's work \cite{V1990}, the process of adding
\emph{whiskers} (or \emph{whiskering}) to the vertices of a graph
has been studied and developed by many authors and from various
directions
(cf.  \cite{BVT2013, BFHVT2015, CN2012, DHNT2021, FH2008, GHS2021, KK2020, S2020, SS2021}).

\begin{definition}
	\label{def.whiskering}
	By adding a \emph{whisker} to a vertex $x$ of a graph $G$, one adds a new vertex $y$ and the edge $\{x,y\}$ to $G$. Let $S \subseteq V_G$ be a subset of the vertices in $G$. Then we denote by $G \cup W(S)$ the graph obtained by adding a whisker to $G$ at each vertex in $S$.
\end{definition}

The first result in this approach to Question \ref{quest.SP} is due to Dung, Hien, Nguyen and Trung \cite{DHNT2021}, which shows that by adding a whisker to every vertex of any given graph one obtains a new graph with the desired property. The case $s=1$ of the following theorem in fact implies Villarreal's result in \cite{V1990}.

\begin{theorem}[{\cite[Corollary 5.9]{DHNT2021}}] \label{thm.DHNT}
  Let $G$ be a graph and let $H = G \cup W(V_G)$ be the graph obtained by
  adding a whisker at every vertex in $G$. Then $J(H)^{(s)}$ is
  componentwise linear for all $s \ge 1$.
\end{theorem}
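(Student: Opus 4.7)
The plan is to invoke Lemma~\ref{lem.VDimpliesCWL}, reducing the theorem to proving that the graph $H_s$ of Construction~\ref{const.vertices} is vertex decomposable for every $s \ge 1$. I would argue by induction on $n = |V_G|$. The base case $n = 1$ is immediate: $H$ is a single edge, and the independence complex of $H_s$ is a join of two simplices.

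For the inductive step, fix a vertex $x_n \in V_G$, and set $G' = G \setminus N_G[x_n]$ with its whiskered companion $H' = G' \cup W(V_{G'})$. The decisive combinatorial observation is that in $H_s$ the whisker copy $y_{n,s}$ has $x_{n,1}$ as its unique neighbor, since the defining inequality $p + s \le s+1$ of Construction~\ref{const.vertices} forces $p = 1$. Consequently, any maximal independent set of $H_s \setminus x_{n,1}$ must contain the now-isolated vertex $y_{n,s}$, and since $y_{n,s} \in N_{H_s}(x_{n,1})$, this exhibits $x_{n,1}$ as a shedding vertex of $\mathrm{Ind}(H_s)$.

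A direct inspection of $N_{H_s}[x_{n,1}]$ shows that after removing this closed neighborhood, the surviving induced subgraph is exactly $(H')_s$ together with the isolated vertices $\{x_{n,p} : p \ge 2\}$ and $\{y_{j,q} : x_j \in N_G(x_n),\, j \ne n\}$. The induction hypothesis applied to the smaller graph $G'$ yields that $(H')_s$ is vertex decomposable, and appending isolated vertices preserves this property (it corresponds to coning the independence complex). So the link of $x_{n,1}$ in $\mathrm{Ind}(H_s)$ is vertex decomposable.

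The genuine obstacle is the deletion $H_s \setminus x_{n,1}$, which is not of the form $(\widetilde{G})_s$ for any whiskered graph $\widetilde{G}$. I would handle it via a secondary reverse induction on $k \in \{0, 1, \ldots, s\}$, proving that $H_s \setminus \{x_{n,1}, \ldots, x_{n,k}\}$ is vertex decomposable. The base case $k = s$ leaves the whiskered graph of $G \setminus x_n$ together with isolated $y_n$-copies, vertex decomposable by the outer induction (since $G \setminus x_n$ has $n-1$ vertices). For the step from $k+1$ back to $k$, the copy $y_{n,s-k}$ retains $x_{n,k+1}$ as its unique neighbor in the residual graph, so the same shedding argument identifies $x_{n,k+1}$ as a shedding vertex; its deletion is covered by the inner induction, while its link decomposes into a smaller whiskered construction plus isolated vertices, again vertex decomposable by the outer induction applied to an appropriate subgraph of $G$.
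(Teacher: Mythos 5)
Your overall strategy---reducing, via Lemma~\ref{lem.VDimpliesCWL}, to showing that $H_s$ is vertex decomposable---is a legitimate route and is the one taken in the related works of Seyed Fakhari, Selvaraja, and Kumar--Kumar; note, however, that the source cited for this theorem, \cite{DHNT2021}, proves it by an entirely different algebraic argument built on the decomposition $I = xI' + T$ of Theorem~\ref{buidcwl} (control of the linear defect), not through vertex decomposability. Your shedding-vertex certificates are correct and nicely economical: $y_{n,s-k}$ has $x_{n,k+1}$ as its unique surviving neighbour in $H_s \setminus \{x_{n,1},\dots,x_{n,k}\}$, so every maximal independent set of the deletion contains $y_{n,s-k}$, which certifies that $x_{n,k+1}$ is a shedding vertex. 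The link computation at $k=0$ is also correct, precisely because $x_{n,1}$ is adjacent to \emph{every} copy $x_{j,q}$, $1 \le q \le s$, of every $x_j \in N_G(x_n)$.

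The genuine gap is the link computation in your inner induction for $k \ge 1$. The vertex $x_{n,k+1}$ is adjacent only to the copies $x_{j,q}$ with $q \le s-k$ of each $x_j \in N_G(x_n)$, so after deleting $N[x_{n,k+1}]$ from $H_s \setminus \{x_{n,1},\dots,x_{n,k}\}$ the copies $x_{j,q}$ with $q > s-k$ survive, and they are \emph{not} isolated: such an $x_{j,q}$ remains adjacent to $x_{l,r}$ for $x_l \in N_G(x_j)\setminus\{x_n\}$ with $r \le s+1-q$, and to the whisker copies $y_{j,r}$ with $r \le s+1-q$. Concretely, take $G = P_3$ with $x_1 - x_2 - x_3$, $s=2$, and $x_n = x_3$: at the step $k=1$, the link of $x_{3,2}$ in $H_2 \setminus x_{3,1}$ still contains $x_{2,2}$ together with its edges to $x_{1,1}$ and $y_{2,1}$. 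This graph is a \emph{non-uniform} duplication of type $\widetilde{H}(s_1,\dots,s_m)$ in the sense of Construction~\ref{const.edges}; it is not of the form $\widetilde{H}_s$ plus isolated vertices for any whiskered graph $\widetilde{H}$, so your outer induction hypothesis does not apply to it. Repairing this forces you to strengthen the statement being proved to all tuples $(s_1,\dots,s_m)$---which is exactly what Kumar and Kumar carry out for trees in Theorem~\ref{thm.CWLclasses}(3), and is where the real difficulty of this circle of problems lives; it does not come for free from the induction on $|V_G|$. A minor secondary point: in the base case $n=1$ the independence complex of $H_s$ is not a join of two simplices once $s \ge 2$; $H_s$ is then a bipartite ``half-graph'' whose independence complex has $s+1$ facets. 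It is vertex decomposable, but that needs its own short argument (for instance, the same $x_{1,1}, x_{1,2}, \dots$ shedding order you use later).
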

\noindent
Dung, Hien, Nguyen, and Trung, in fact, proved a stronger statement than Theorem \ref{thm.DHNT} in \cite[Theorem 5.7]{DHNT2021}, where they
  showed that the same conclusion holds if at least one whisker is added to every vertex of $G$.

Cook and Nagel \cite{CN2012} generalized the process of whiskering to that of \emph{clique-whiskering} to extend Villarreal's previous work \cite{V1990}.

\begin{definition}
  Let $G$ be a graph. A \emph{clique partition} $\pi$ of $G$ is a partition of the vertices in $G$ into disjoint (possibly empty) subsets $W_1, \dots, W_t$ such that the induced graphs $G_{W_i}$  is a complete (or empty) graph in $G$ for all $i = 1, \dots, t$. A \emph{clique-whiskering} of $G$ associated to a clique-partition $\pi$, denoted by $G^\pi$, is a graph over the
  vertices $V_{G^\pi} = V_G \cup \{y_1, \dots, y_t\}$ and has edges
	$$E_{G^\pi} = E_G \cup \left(\bigcup_{i=1}^t \left\{\{v,y_i\} ~\big|~ v \in W_i\right\}\right).$$
\end{definition}

Selvaraja \cite{S2020} proved the following theorem, of which the case where $s=1$ was known in the previous work of Cook and Nagel \cite{CN2012}.

\begin{theorem}[{\cite[Theorem 4.9]{S2020}}] \label{thm.Selva}
	Let $G$ be a graph and let $\pi$ be a clique vertex partition of $G$. Then $J(G^\pi)^{(s)}$ has linear quotients for any $s \ge 1$.
\end{theorem}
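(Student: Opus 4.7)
The plan is to reduce the claim to showing that the duplicated graph $(G^\pi)_s$ of Construction \ref{const.vertices} is vertex decomposable. Once this is in hand, Lemma \ref{lem.VDimpliesCWL} immediately gives that $J(G^\pi)^{(s)}$ has linear quotients: vertex decomposability of $(G^\pi)_s$ implies that $J((G^\pi)_s) = (J(G^\pi)^{(s)})^{\pol}$ has linear quotients, and this property descends to $J(G^\pi)^{(s)}$ itself by Theorem \ref{linquot-polar}.

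The key structural observation is that each whisker vertex $y_i$ in $G^\pi$ is a \emph{simplicial vertex}, since its open neighborhood $W_i$ is by hypothesis a clique of $G$. I would first check that this simplicial property lifts to the duplicated graph: the ``top'' copy $y_{i,s}$ has neighborhood $\{v_{j,1} \mid v_j \in W_i\}$ in $(G^\pi)_s$, and for any $v_j, v_{j'} \in W_i$ the duplicates $v_{j,1}$ and $v_{j',1}$ are adjacent in $(G^\pi)_s$ because $\{v_j, v_{j'}\} \in E_G \subseteq E_{G^\pi}$ and $1+1 \leq s+1$. Hence $y_{i,s}$ is a simplicial vertex of $(G^\pi)_s$, and is therefore a shedding vertex of the independence complex $\text{Ind}((G^\pi)_s)$.

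To complete the vertex decomposition, I would argue inductively that both $\del_{\text{Ind}((G^\pi)_s)}(y_{i,s})$ and $\link_{\text{Ind}((G^\pi)_s)}(y_{i,s})$ are vertex decomposable. The deletion is the independence complex of $(G^\pi)_s$ with one copy of $y_i$ removed, which should be reinterpretable as an edge-duplicated graph $G^\pi(s_1,\dots,s_m)$ as in Construction \ref{const.edges}, with the whisker edges $\{v_j, y_i\}$ receiving multiplicity $s-1$ and all others multiplicity $s$; this is a strictly smaller instance in the natural induction. The link corresponds to removing $y_{i,s}$ together with $\{v_{j,1} \mid v_j \in W_i\}$; after relabeling the indices $p \mapsto p-1$ on the remaining copies of each $v_j \in W_i$, this should again fit the inductive framework as the duplication of a smaller clique-whiskered graph (in which part of $W_i$ has been effectively pruned). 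Induction on $|V_{G^\pi}| + s$ should then close the argument.

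The main obstacle will be formulating a sufficiently flexible inductive hypothesis, since after taking the link the resulting graph is no longer of the form $(H)_s$ for a clique-whiskered graph $H$. One must likely enlarge the scope of the induction to the edge-duplicated construction $G^\pi(s_1,\dots,s_m)$ with non-uniform multiplicities, in the spirit of the tree case recorded in Theorem \ref{thm.CWLclasses}(3), and verify that this larger family is closed under the deletion and link operations at a simplicial whisker vertex $y_{i,s}$. Pinning down the precise combinatorial bookkeeping so that the deletion and link are manifestly smaller instances of the enlarged family is the combinatorial heart of the argument.
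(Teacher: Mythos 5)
Your overall strategy is the right one, and it is the one this survey attributes to Selvaraja: polarize via Lemma \ref{lem.SF2018} to identify $\bigl(J(G^\pi)^{(s)}\bigr)^{\pol}$ with $J\bigl((G^\pi)_s\bigr)$, prove that $(G^\pi)_s$ is vertex decomposable, and then descend through Lemma \ref{lem.VDimpliesCWL} and Theorem \ref{linquot-polar}. (Note that this survey does not reprove the theorem; it cites \cite[Theorem 4.9]{S2020}, so the comparison here is with the strategy the survey describes and with what such a proof must contain.)

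There is, however, a concrete error at the pivot of your argument: a simplicial vertex is \emph{not} in general a shedding vertex of the independence complex. The correct statement (Woodroofe \cite{W2009}) is that every \emph{neighbor} of a simplicial vertex is a shedding vertex, because $N[v] \subseteq N[u]$ for $u$ a neighbor of a simplicial vertex $v$. For a counterexample to your claim, take $G = K_2$ on $\{u,v\}$ with the singleton clique partition, so that $G^\pi$ is the path $y_1 - u - v - y_2$ and $y_1$ is simplicial: the set $\{v\}$ is a maximal independent set of $G^\pi \setminus y_1$ but not of $G^\pi$ (it extends by $y_1$), so $y_1$ fails the shedding condition. The same failure occurs for $y_{i,s}$ in $(G^\pi)_s$ whenever some maximal independent set of $(G^\pi)_s \setminus y_{i,s}$ avoids all of $\{v_{j,1} \mid v_j \in W_i\}$, which happens as soon as it contains, say, $y_{i,1}$. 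The shedding vertex you want is instead a first copy $v_{j,1}$ of a vertex $v_j \in W_i$, for which one checks $N[y_{i,s}] \subseteq N[v_{j,1}]$ in $(G^\pi)_s$. This is not a cosmetic fix: once you shed $v_{j,1}$ rather than $y_{i,s}$, your descriptions of the deletion and the link no longer apply, and the identification of the deletion with $G^\pi(s_1,\dots,s_m)$ (whisker edges at multiplicity $s-1$) already fails on vertex sets, since removing $y_{i,s}$ does not remove $v_{j,s}$. Beyond this, the induction itself --- the closure of a suitably enlarged family under deletion and link, in the spirit of the $G(s_1,\dots,s_m)$ bookkeeping of \cite{KK2020} --- is exactly the content of Selvaraja's proof, and your proposal explicitly leaves it unexecuted; so even with the shedding vertex corrected, what you have is a plan with an acknowledged gap at its combinatorial core rather than a proof.
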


Inspired by Theorem \ref{thm.DHNT}, the following question arises naturally: for which subset $S \subseteq V_G$ of the vertices in a graph $G$ do we have that $J(G \cup W(S))^{(s)}$ is componentwise linear for all $s \in \NN$? A number of special configurations of such subsets $S$ have been identified.
The case $s = 1$ in the following result of Selvaraja \cite{S2020} was already known by Francisco and H\`a in \cite{FH2008}.

\begin{theorem}[{\cite[Corollary 4.5]{S2020}}]
	\label{whiskering}
  Let $G$ be graph and let $S \subset V_G$ be a vertex cover of $G$. Then, $J(G \cup W(S))^{(s)}$ has linear quotients for any $s \ge 1$.
\end{theorem}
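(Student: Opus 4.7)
By Lemma \ref{lem.VDimpliesCWL}, it suffices to show that $H_s := (G \cup W(S))_s$ is vertex decomposable for every $s \ge 1$; the linear-quotients conclusion for $J(G \cup W(S))^{(s)}$ then follows by combining Lemma \ref{lem.SF2018} with Theorem \ref{linquot-polar}. To make the induction tractable, the plan is to prove the stronger statement that the edge-varying duplication $(G \cup W(S))(r_1, \ldots, r_m)$ of Construction \ref{const.edges} is vertex decomposable for every non-negative integer tuple $(r_i)$ indexed by the edges of $G \cup W(S)$. The induction is carried out on the lexicographic pair $(|V_G|, \sum_i r_i)$.

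The base case, where all $r_i$ are $0$ or $1$, corresponds to the statement that $G \cup W(S)$ itself is vertex decomposable whenever $S$ is a vertex cover of $G$; this can be extracted from the Francisco--H\`a work \cite{FH2008} (or verified directly by the same shedding-vertex argument described below, since each vertex of $S$ carries a pendant whisker partner). For the inductive step, fix $x_1 \in S$ with whisker partner $y_1$, and let $r \ge 1$ be the duplication parameter of the whisker edge $\{x_1, y_1\}$. Then $y_{1,r}$ is a pendant of the duplicated graph whose unique neighbor is $x_{1,1}$, which forces $x_{1,1}$ to be a shedding vertex of the independent complex: every independent set $A$ avoiding $N[x_{1,1}]$ can be extended by $y_{1,r}$, since the sole neighbor of $y_{1,r}$, namely $x_{1,1}$, is not in $A$.

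For the link of $x_{1,1}$, removing $N[x_{1,1}]$ erases every copy of $y_1$ and every copy of each $G$-neighbor of $x_1$; what remains, after discarding isolated vertices (which correspond to cone points of the independent complex, and hence are harmless for vertex decomposability), is an edge-varying duplication of $(G \setminus N_G[x_1]) \cup W(S \setminus N_G[x_1])$. Since $S \setminus N_G[x_1]$ remains a vertex cover of $G \setminus N_G[x_1]$, the inductive hypothesis at strictly smaller vertex count applies. For the deletion of $x_{1,1}$, the vertex $y_{1,r}$ becomes isolated and is discarded as a cone point; relabeling the surviving copies $x_{1,p}$ for $p \ge 2$ as $x_{1,p-1}'$ exhibits the remaining graph as an edge-varying duplication of $G \cup W(S)$ in which the parameters on each edge incident to $x_1$ have decreased by exactly one, so the inductive hypothesis at strictly smaller $\sum_i r_i$ applies.

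The principal obstacle is the deletion step: $H_s \setminus \{x_{1,1}\}$ is not literally of the form $H'_{s'}$ for a uniformly duplicated graph $H'$, so one cannot remain in the class $\{(G \cup W(S))_s\}_{s \ge 1}$ throughout the induction. This is precisely why the stronger inductive hypothesis phrased via the edge-varying Construction \ref{const.edges} is essential. Carrying out the deletion rigorously requires a careful bookkeeping argument to confirm that, after removing $x_{1,1}$ and discarding the newly isolated $y_{1,r}$, the relabeled graph has a genuinely well-defined edge-varying duplication structure — in particular, one must check that the vertices $x_{j,s}$ for $j \in N_G(x_1)$ are correctly accounted for through the edges not incident to $x_1$, and that the isolated vertices that arise truly behave as cone points of the independent complex.
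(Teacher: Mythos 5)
The survey states this theorem only as a quotation of \cite[Corollary 4.5]{S2020} and supplies no proof, so your argument must stand on its own; its overall reduction --- polarize via Lemma \ref{lem.SF2018}, prove vertex decomposability of the duplicated graph, and conclude through Theorem \ref{linquot-polar} and Lemma \ref{lem.VDimpliesCWL} --- is exactly the framework of Section 5 and of the related proofs in \cite{KK2020, GHS2021}, and your identification of $x_{1,1}$ as a shedding vertex via the pendant $y_{1,r}$ is sound. The first genuine gap is that your strengthened inductive statement is false. Construction \ref{const.edges} permits arbitrary tuples in $\ZZ_{\ge 0}^m$, and setting the parameter of every whisker edge to $0$ and of every edge of $G$ to $1$ recovers $G$ itself (more generally, any spanning subgraph of $G$); since $G$ is an arbitrary graph and ${\rm Ind}(C_n)$ is not vertex decomposable for $n \neq 3,5$ \cite{W2009}, the claim that $(G \cup W(S))(r_1,\ldots,r_m)$ is vertex decomposable for \emph{every} tuple cannot hold, and your base case (``all $r_i \in \{0,1\}$ corresponds to $G \cup W(S)$ itself'') is incorrect. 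The induction must be restricted to tuples satisfying an invariant guaranteeing that each vertex of $S$ retains a usable whisker (roughly, that the whisker parameter at $v \in S$ dominates the parameters of the $G$-edges at $v$), and one must then verify that both branches of the recursion preserve that invariant; isolating and propagating such an invariant is the real content of the arguments in \cite{KK2020} and \cite{GHS2021}, and it is absent here.

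The second gap is in the link step, which you treat as routine while flagging the deletion as the principal obstacle; in fact the difficulty is reversed. Removing $N[x_{1,1}]$ deletes only the copies $x_{j,q}$ with $q \le r_{\{x_1,x_j\}}$ of a $G$-neighbor $x_j$ of $x_1$. If $x_j$ has another incident edge of strictly larger parameter --- which is unavoidable once the parameters become non-uniform, as they do after a single deletion step --- then the higher-indexed copies of $x_j$ survive and remain adjacent to low-indexed copies of the other neighbors of $x_j$, so they are neither removed nor isolated. Hence $\link(x_{1,1})$ is \emph{not} an edge-varying duplication of $(G\setminus N_G[x_1]) \cup W(S \setminus N_G[x_1])$, and the induction on $|V_G|$ does not close; worse, a surviving neighbor $x_j \in S$ can lose its whisker in the link when $r_{\{x_1,x_j\}}$ is at least the whisker parameter at $x_j$, so the class ``vertex cover plus whiskers'' is not preserved either. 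By contrast, your deletion computation is essentially correct: each edge at $x_1$ does become $e(r_e - 1)$ after reindexing, up to isolated vertices that are cone points. To complete the proof you would need either a corrected description of the link together with the restricted inductive class above, or a different route such as deducing the statement from the clique-whiskering theorem (Theorem \ref{thm.Selva}) as Selvaraja does.
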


As a consequence to Theorem \ref{whiskering}, Selvaraja and Skelton \cite{SS2021} obtained the following corollary; the case $s=1$ was again known in \cite{FH2008}.

\begin{corollary}[{\cite[Corollary 4.6]{SS2021}}]
  Let $G$ be a graph and $S \subseteq G$.  If $|S| \geq |V(G)|-3$,
  then $J(G \cup W(S))^{(s)}$ is componentwise linear for all
  $s \geq 1$.
  \end{corollary}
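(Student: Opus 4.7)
The strategy is to invoke the $W$-graph characterization due to Selvaraja and Skelton, namely \cite[Theorem 4.2]{SS2021}, applied to $H = G \cup W(S)$. Let $T = V_G \setminus S$, so that the hypothesis gives $|T| \leq 3$. If $T$ is an independent set in $G$, then $S$ is already a vertex cover of $G$, and Theorem \ref{whiskering} immediately yields that $J(H)^{(s)}$ has linear quotients, hence is componentwise linear via Theorem \ref{thm.LQimpliesCWL}. Thus I may assume that the induced subgraph $G_T$ contains at least one edge.

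The first step is to verify that $H$ is a $W$-graph. Fix an independent set $A$ of $H$, and recall that each whisker $y_v$ for $v \in S$ has $v$ as its only neighbor in $H$. If some $v \in S$ satisfies $v \notin A$ and $y_v \notin A$, then $y_v$ remains in $H \setminus N[A]$ with at most one neighbor there (namely $v$, if $v$ survives), so $y_v$ is simplicial. Otherwise every $v \in S$ lies in $N_H[A]$, since $v \in A$ or $y_v \in A$ forces $v \in N[A]$, so both $S$ and its whiskers are entirely removed; the residual graph $H \setminus N[A]$ is then contained in $T$, a graph on at most three vertices, which (when nonempty) always contains a simplicial vertex, as a direct inspection of the finite list of graphs on three or fewer vertices confirms.

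By \cite[Theorem 4.2]{SS2021} the corollary now reduces to showing that the spanning bipartite graph $\B_{H \setminus N[A]}$ is vertex decomposable for every independent set $A$ of $H$. For each such $A$, the graph $H \setminus N[A]$ falls into one of two regimes: either it retains surviving whiskers, so that the shedding sequence can be built by peeling these whiskers in order, leaving a bipartite remainder consisting of pendant edges (trivially vertex decomposable); or it lies entirely inside $T$ on at most three vertices, whose spanning bipartite graph is tiny and can be inspected case by case.

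The principal obstacle is this final bipartite verification: although $|T| \leq 3$ restricts the combinatorial possibilities to a finite list, indexed by the edge structure of $G_T$ and the intersection pattern of $A$ with $S \cup \{y_v : v \in S\} \cup T$, one must check vertex decomposability of $\B_{H \setminus N[A]}$ in each resulting configuration. Once this finite enumeration is completed, \cite[Theorem 4.2]{SS2021} yields that $J(G \cup W(S))^{(s)}$ is componentwise linear for all $s \geq 1$, establishing the corollary.
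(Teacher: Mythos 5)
Your argument has a genuine gap at exactly the step you flag as ``the principal obstacle.'' Condition (1) of \cite[Theorem 4.2]{SS2021} must be verified for \emph{every} independent set $A$ of $H = G \cup W(S)$, and that verification carries the full weight of the corollary; your sketch of it does not go through. The two ``regimes'' you describe do not exhaust the difficulty: when $A$ fails to dominate $S$ (for instance $A = \emptyset$), the residual graph $H \setminus N[A]$ is all of $H$, which contains every edge of $G$ together with the unwhiskered vertices $T = V_G \setminus S$ and whatever edges $G_T$ has. Its spanning bipartite graph $\mathcal{B}_{H}$ is built from a full shedding sequence of $H$ and the isolated vertices left at the end; it is in no sense a disjoint union of pendant edges, and the family of graphs to be checked is not finite --- it is indexed by all independent sets of $H$ and by the structure of $G$ itself, not merely by the edge pattern on the at most three vertices of $T$ and the trace of $A$ on $S \cup \{y_v\} \cup T$. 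So the ``finite enumeration'' you defer is neither finite nor routine; it is precisely the content of the result. (Your preliminary observations are fine: if $T$ is independent then $S$ is a vertex cover and Theorem \ref{whiskering} applies, and your argument that $H$ is a $W$-graph --- a surviving whisker vertex has degree at most one in $H \setminus N[A]$, and otherwise the residue sits inside $T$ --- is correct; but being a $W$-graph only makes the equivalence of \cite[Theorem 4.2]{SS2021} available, it does not discharge its hypothesis~(1).)

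Note also that this is not the route the paper indicates: the survey presents the corollary as a consequence of Theorem \ref{whiskering}, Selvaraja's vertex-cover whiskering theorem, supplemented in \cite{SS2021} by an analysis of the at most three unwhiskered vertices, rather than by passing through the $\mathcal{B}_{G \setminus N[A]}$ criterion. To complete your argument you would need to actually produce, for each independent set $A$, a shedding order of $H \setminus N[A]$ and a proof that the associated spanning bipartite graph is vertex decomposable; alternatively, handle the case where $G_T$ contains an edge by a method that does not reduce to the full strength of \cite[Theorem 4.2]{SS2021}.
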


The condition that $S$ is a vertex cover in Theorem \ref{whiskering} is improved by Gu, H\`a, and Skelton \cite{GHS2021}. We call a subset $S$ of the vertices in $G$ a \emph{cycle cover} if every cycle in $G$ contains at least a vertex in $S$. A vertex cover is necessarily a cycle cover, but contains a lot more vertices in general.

\begin{theorem}[{\cite[Theorem 3.10]{GHS2021}}]	\label{thm.GHS}
  Let $G$ be a graph and let $S$ be a cycle cover of $G$. Let
  $H$ be the graph obtained by adding at least one whisker
  to each vertex in $S$.  Then $J(H)^{(s)}$ is componentwise linear for
  all $s \geq 1$.
  \end{theorem}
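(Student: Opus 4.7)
The plan is to prove by induction that the graph $H_s$ from Construction~\ref{const.vertices} is vertex decomposable for every $s \ge 1$; once this is established, Lemma~\ref{lem.VDimpliesCWL} yields that $J(H)^{(s)}$ has linear quotients, and hence is componentwise linear by Theorem~\ref{thm.LQimpliesCWL}. The structural input driving the argument is the observation that the cycle-cover hypothesis on $S$ is equivalent to the statement that $G \setminus S$ is a forest; this is a strict relaxation of the vertex-cover hypothesis of Theorem~\ref{whiskering}, and the plan is to mimic the proof strategy of that theorem while carrying the forest structure of $G \setminus S$ through an induction on $|V_G|$.

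For the inductive step I would choose $x \in S$ together with an attached whisker $y$. In $H_s$, the copy $y_s$ is adjacent only to $x_1$, because the edge rule $p+q \le s+1$ forces $q=1$ when $p=s$. Hence $y_s$ is a leaf of $H_s$ and $x_1$ is a shedding vertex of $\mathrm{Ind}(H_s)$: any independent set avoiding $N_{H_s}[x_1]$ may be enlarged by $y_s$. It then suffices to verify that both $\link_{\mathrm{Ind}(H_s)}(x_1)$ and $\del_{\mathrm{Ind}(H_s)}(x_1)$ are vertex decomposable.

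The link is relatively clean: removing $N_{H_s}[x_1]$ leaves the isolated vertices $x_2, \ldots, x_s$ together with the subgraph $(H')_s$, where $H' = G' \cup W(S')$, $G' = G \setminus N_G[x]$, and $S' = S \cap V(G')$. One checks that $S'$ is again a cycle cover of $G'$: any cycle of $G'$ is a cycle of $G$ disjoint from $N_G[x]$, so it meets $S \cap V(G') = S'$. Every surviving vertex of $S'$ retains a whisker, and $|V(G')| < |V_G|$, so the inductive hypothesis applies to $H'$; since isolated vertices preserve vertex decomposability, the link itself is vertex decomposable.

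The principal obstacle I expect is the deletion $H_s \setminus \{x_1\}$, which is not immediately of the form $(\widetilde{H})_{\widetilde{s}}$ for a smaller input. My plan is an iterated shedding inside the deletion: once $x_1$ is removed, $y_s$ becomes isolated (making the remaining independence complex a cone over the rest, which is vertex decomposable over a vertex decomposable base) and $y_{s-1}$ becomes a leaf whose only remaining neighbor is $x_2$, so $x_2$ is a shedding vertex of $\mathrm{Ind}(H_s \setminus \{x_1\})$. Iterating along the column $x_1, x_2, \ldots, x_s$, where at each stage the next copy of the whisker provides a leaf neighbor, eventually peels off every copy of $x$ and leaves $(H \setminus \{x\})_s$ together with the now-isolated $y_1, \ldots, y_s$; since $S \setminus \{x\}$ is still a cycle cover of $G \setminus \{x\}$ with whiskers preserved, induction on $|V_G|$ closes the proof. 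The technical heart of the argument --- and its chief obstacle --- is the bookkeeping needed to check that each intermediate link appearing in this iterated shedding is again of cycle-cover-with-whiskers type, possibly on a smaller vertex set or with a reduced second parameter; this is precisely where the cycle-cover hypothesis is used in an essential way, since $G \setminus N_G[x]$ and $G \setminus \{x\}$ both inherit cycle covers from $G$ but would not in general inherit vertex covers. Extra whiskers at a vertex, if present, become additional isolated vertices at the appropriate stage and do not disturb vertex decomposability.
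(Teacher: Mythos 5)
Your overall strategy --- reduce via Lemma \ref{lem.SF2018} and Lemma \ref{lem.VDimpliesCWL} to showing that the duplicated graph $H_s$ is vertex decomposable, and locate shedding vertices among the first copies of whiskered vertices by exploiting the fact that the top copy of a whisker is a leaf --- is exactly the approach the survey attributes to this line of work (the survey itself only cites \cite[Theorem 3.10]{GHS2021} and does not reproduce the proof), and your analysis of the first shedding step is correct: $y_s$ is a leaf with unique neighbour $x_1$, so $x_1$ sheds, and the link is $\bigl(H\setminus N_H[x]\bigr)_s$ together with isolated vertices, with the cycle-cover property inherited by $G\setminus N_G[x]$. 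The genuine gap is in the deletion chain, and you have located it yourself without closing it. When you shed $x_p$ from $H_s\setminus\{x_1,\dots,x_{p-1}\}$ with $p\ge 2$, the resulting link deletes $x_1,\dots,x_p$ together with the copies $x_{j,q}$, $q\le s+1-p$, of every neighbour $x_j$ of $x$. In what remains, each neighbour of $x$ retains only its top $p-1$ copies while every other vertex retains all $s$ copies, and the rule $q+r\le s+1$ then imposes different effective parameters on different edges (for instance, two adjacent neighbours of $x$ may have no edges at all between their surviving copies, while edges between two non-neighbours keep the full parameter $s$). This graph is not of the form $\widetilde H_{\widetilde s}$ for any single $\widetilde s$; after re-indexing it is a non-uniform duplication $\widetilde H(s_1,\dots,s_m)$ in the sense of Construction \ref{const.edges}. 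Hence the class of objects over which you induct is not closed under the operations you perform, and the induction does not close as written. The repair --- which is what the cited sources do --- is to strengthen the inductive statement so that it covers all tuples $(s_1,\dots,s_m)$ rather than a single uniform parameter; this is precisely why Theorem \ref{thm.CWLclasses}(3) is stated for arbitrary tuples.

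A second, smaller omission concerns the base case. Your induction removes at least one vertex of $S$ at every step, so it bottoms out when $S=\emptyset$, at which point $H=G$ is a forest with no whiskers and you must already know that every (non-uniform) duplication of a forest is vertex decomposable. That is not free: it is Kumar and Kumar's theorem (Theorem \ref{thm.CWLclasses}(3) again, stated for trees and arbitrary tuples), and it must be invoked explicitly. With the strengthened induction hypothesis over $H(s_1,\dots,s_m)$ and that base case in place, the skeleton you propose does assemble into a proof along the lines of the published one; as currently written, the ``bookkeeping'' you defer is in fact the substantive content of the argument.
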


Theorem \ref{thm.GHS} is slightly generalized further in \cite[Theorem 4.6]{GHS2021},
where it is shown that instead of adding just a whisker at each vertex
in $S$, one can add a {\it non-pure star complete graph}, a graph
constructed by adjoining complete graphs of different sizes
at a single vertex, which has at least one whisker (see \cite{GHS2021} for more details).

We round out this section by pointing out that there has been
little work into the case of symbolic powers of edge ideals.
We complete this paper with one result in
this direction that makes use of Theorem \ref{veronese}.

\begin{theorem}\label{newresult}
  Let $G$ be a graph and suppose that $W_1,\ldots,W_t$ is a complete
  list of minimal vertex covers.  Suppose that $W_i \cup W_j = V$
  for all $i \neq j$.  Then $I(G)^{(s)}$ is componentwise linear
  for all $s \geq 1$.
\end{theorem}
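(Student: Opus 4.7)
The proof is essentially an application of Theorem \ref{veronese} once we translate the symbolic powers of $I(G)$ into the right form. First I would recall that since $I(G)$ is a squarefree monomial ideal, its associated primes are exactly its minimal primes, which in turn correspond to the minimal vertex covers of $G$. Concretely, if $W_1, \ldots, W_t$ are the minimal vertex covers, then
$$I(G) = P_{W_1} \cap \cdots \cap P_{W_t}, \qquad \text{where } P_{W_i} = \langle x_j \mid x_j \in W_i\rangle = \mathfrak{m}_{W_i}.$$
Using the symbolic power formula for squarefree monomial ideals recalled at the start of Section~5, this gives
$$I(G)^{(s)} = \mathfrak{m}_{W_1}^s \cap \cdots \cap \mathfrak{m}_{W_t}^s$$
for every $s \geq 1$.

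Now the hypothesis $W_i \cup W_j = V = [n]$ for all $i \neq j$ is precisely the pairwise covering condition in Theorem~\ref{veronese}. Setting $J_i = W_i$ and $a_i = s$ for $i = 1, \ldots, t$, and choosing any auxiliary subset $K \subseteq [n]$ with exponent $b$ so that $\mathfrak{m}_K^b$ contains the intersection (for instance $K = W_1$ and $b = 1$, since then $\mathfrak{m}_{W_1}^s \subseteq \mathfrak{m}_{W_1}^b$ and adding $\mathfrak{m}_K^b$ to the intersection is harmless), we may write
$$I(G)^{(s)} = \mathfrak{m}_{W_1}^s \cap \cdots \cap \mathfrak{m}_{W_t}^s \cap \mathfrak{m}_K^b.$$
Theorem~\ref{veronese} then applies directly and yields that $I(G)^{(s)}$ is componentwise linear.

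I do not expect any serious obstacle here: the entire argument is a bookkeeping step plus an invocation of Theorem~\ref{veronese}. The only point that requires a moment of care is verifying that the result of Mohammadi--Moradi genuinely covers the case with no extra $\mathfrak{m}_K^b$ factor, but this is handled by the dummy-factor trick above. In particular, the degenerate case $t = 1$ (where the pairwise union condition is vacuous) does not actually occur for an edge ideal of a non-empty graph, since $I(G)$ cannot be a prime monomial ideal generated by degree-two elements, so there is nothing to check separately.
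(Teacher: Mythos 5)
Your proof is correct and follows essentially the same route as the paper: decompose $I(G)$ into the monomial primes $\mathfrak{m}_{W_i}$ attached to the minimal vertex covers, use the symbolic power formula for square-free monomial ideals to write $I(G)^{(s)} = \mathfrak{m}_{W_1}^s \cap \cdots \cap \mathfrak{m}_{W_t}^s$, and invoke Theorem \ref{veronese}. Your extra care in absorbing the $\mathfrak{m}_K^b$ factor of Theorem \ref{veronese} (a point the paper's proof passes over silently) is a reasonable bit of bookkeeping but does not change the substance of the argument.
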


\begin{proof}  Suppose that $W_1,\ldots,W_t$ is a complete
  list of minimal vertex covers of $G$.  The edge ideal $I(G)$
  then has the following primary decomposition
  $$I(G) = \langle x ~|~ x \in W_1 \rangle \cap \cdots \cap \langle x ~|~
  x \in W_t \rangle.$$
  For a proof, see \cite[Corollary 1.35]{VT2013}.
  For a variable $x_i$, let ${\rm supp}(x_i) = \{i\}$.  Note
  we can rewrite $\langle x ~|~ x \in W_i \rangle  =\mathfrak{m}_{J_i}$
  as a Veronese ideal with
  $J_i = \{{\rm supp}(x) ~|~ x \in W_i\} \subseteq [n]$. Thus,
  since $I(G)$ is a square-free monomial ideal, we can write
  the $s$-th symbolic powers of $I(G)$ as
  $$I(G)^{(s)} = \mathfrak{m}_{J_1}^s \cap \cdots \cap \mathfrak{m}_{J_t}^{s}.$$
  The conclusion now follows by Theorem \ref{veronese}.
\end{proof}

A graph $G$ is a {\it complete $m$-partite graph} if the
vertices $V$ can be partitioned as $V = V_1 \cup V_2 \cup \cdots \cup V_m$
such that for all $i \neq j$, if $x \in V_i$ and $y \in V_j$, then
$\{x,y\} \in E$.  Note that the complete graph $K_n$ is
the complete $n$-partite graph with $V_i = \{x_i\}$ for all $i$.
We then have the following result.

\begin{corollary}\label{newresult2}
  Let $G$ be a complete $m$-partite graph (for any
  $m \geq 2$).  Then $I(G)^{(s)}$ is componentwise linear for
  any $s \geq 1$.
\end{corollary}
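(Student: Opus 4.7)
The plan is to reduce Corollary \ref{newresult2} directly to Theorem \ref{newresult} by identifying the minimal vertex covers of a complete $m$-partite graph and verifying that any two of them exhaust the vertex set.

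First I would describe the minimal vertex covers of $G$. Let $V = V_1 \cup \cdots \cup V_m$ be the partition of the vertices. Because every edge of $G$ joins two different parts, the complement $V \setminus V_i$ of each part meets every edge and hence is a vertex cover. Conversely, the maximal independent sets of $G$ are precisely the parts $V_1, \ldots, V_m$: any two vertices in distinct parts are adjacent, so an independent set lies in a single $V_i$, and it is maximal exactly when it equals $V_i$. Since minimal vertex covers are the complements of maximal independent sets, the complete list of minimal vertex covers of $G$ is
\[
W_i = V \setminus V_i, \qquad i = 1, \ldots, m.
\]

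Next I would verify the hypothesis of Theorem \ref{newresult}. For any $i \neq j$, the parts $V_i$ and $V_j$ are disjoint, so
\[
W_i \cup W_j = (V \setminus V_i) \cup (V \setminus V_j) = V \setminus (V_i \cap V_j) = V.
\]
Thus every pair of distinct minimal vertex covers of $G$ covers all of $V$, which is exactly the hypothesis of Theorem \ref{newresult}. Applying that theorem immediately yields that $I(G)^{(s)}$ is componentwise linear for all $s \geq 1$.

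There is essentially no obstacle here: the real content has already been done in Theorem \ref{newresult} (which in turn reduces to the intersection-of-Veronese ideals criterion, Theorem \ref{veronese}). The only nontrivial combinatorial fact needed is the classification of minimal vertex covers of a complete $m$-partite graph, which follows from the easy observation about its maximal independent sets. Hence the proof is just a short verification.
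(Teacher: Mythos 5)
Your proof is correct and follows exactly the paper's own argument: identify the minimal vertex covers of the complete $m$-partite graph as the complements $W_i = V \setminus V_i$ of the parts, note that $W_i \cup W_j = V$ for $i \neq j$, and apply Theorem \ref{newresult}. The extra justification you give via maximal independent sets is a welcome elaboration but does not change the route.
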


\begin{proof}
  If $V = V_1 \cup V_2 \cup \cdots \cup V_m$ is the partition
  of $V$, then the minimal vertex covers of $G$ have the
  form $W_i = V_1 \cup \cdots \cup \widehat{V}_i \cup \cdots \cup V_m$
  for $i=1,\ldots,m$, where we mean $V_i$ is omitted.  Now
  apply Theorem \ref{newresult}.
  \end{proof}


\section{Future research directions}

We finish this paper with some problems which we hope will
generate future work.

A natural way to generalize Herzog, Hibi, and Ohsugi's conjecture
is consider objects more general than chordal graphs.  Over the
last decade, there has been interest in generalizing the
property of chordality of graphs to simplicial complexes (see,
for example, \cite{ANS2016} and references therein).
The notion of a cover ideal can be generalized to simplicial complexes
as follows:  given a simplicial complex $\Delta$, the cover ideal
of $\Delta$ is
$$J(\Delta) = \langle x_{i_1}\cdots x_{i_s} ~|~ \{x_{i_1},\ldots,x_{i_s}\} \cap
F \neq \emptyset ~~\mbox{for all facets $F \in \Delta$}\rangle.$$
The following question then generalizes the Herzog-Hibi-Ohsugi
Conjecture:

\begin{question}
  Let $\Delta$ be a ``chordal'' simplicial complex (using an appropriate
  definition of chordal) with cover ideal $J(\Delta)$.  Is $J(\Delta)$
  componentwise linear?  Is $J(\Delta)^s$ componentwise linear for
  all $s \geq 1?$  We can also ask similar questions for
  $J(\Delta)^{(s)}$.
  \end{question}

\noindent
We have put chordal in quotes since it is not clear which
generalization of chordality one will want to use. A
starting point to attack this question would be the
case of simplicial trees, as defined by Faridi \cite{F2004}.  In
fact, when $\Delta$ is a simplicial tree, Faridi has already
shown that $J(\Delta)$ is componentwise linear
(see \cite[Corollary 5.5]{F2004}).

Theorems \ref{chordallinear} and \ref{bipartitelinear} show
that if $G$ is a chordal or bipartite graph such that $J(G)$ has a
linear resolution, then all powers of $J(G)$ have a linear resolution.
This leads to the following question, which was first posed by
Mohammadi \cite[Question 4.1]{M2014}.

\begin{question}\label{powersoflinear}
  Let $G$ be a graph such that the cover ideal $J(G)$ has a linear
  resolution.  Does $J(G)^s$ have a linear resolution for all $s \geq 1$?
  If not, what hypotheses are needed on $G$ to give this conclusion?
\end{question}

\noindent
Mohammadi has shown that the previous question is true for {\it cactus graphs}
(see \cite[Theorem 4.3]{M2014}).  A cactus graph is one where each edge belongs
to at most one induced cycle in the graph.
We are not aware of any other families
of graphs for which there is a positive (or negative!) answer to Question
\ref{powersoflinear}.

As noted in Section 4, we do not know of many examples
for edge ideals whose powers are componentwise linear.
We formalize this as a question.

\begin{question}\label{q.63}
  Let $G$ be a graph with edge ideal $I(G)$.  What properties
  on $G$ imply that $I(G)^s$ is componentwise linear?
  Similarly, what conditions imply $I(G)^{(s)}$ is componentwise linear?
  \end{question}
\noindent
Observe that $I(G)^s$ is generated in a single degree.
  Thus, for $I(G)^s$ to be componentwise linear, it
  needs to have a linear resolution. Hence, Question \ref{q.63} for
  $I(G)^s$  reduces to asking when a power of an edge ideal has a linear
  resolution.  See Peeva and Nevo \cite{PN2013} for
  some work in this direction.

We also add a question
from Selvaraja and Skelton's work (see \cite[Question 5.8]{SS2021}).

\begin{question} Let $G$ be a vertex decomposable graph.
  \begin{enumerate}
  \item If $J(G)^{(2)}$ is not componentwise linear, is it true
    that $J(G)^{(s)}$ is not componentwise linear for all $s \geq 3$?
  \item If $\mathcal{B}_{G \setminus N[A]}$ is vertex decomposable
    for any independent set $A$, is $J(G)^{(s)}$ a componentwise
    linear ideal for all $s \geq 2$?
    \end{enumerate}
\end{question}

We consider another question inspired by Selvaraja and Skelton's work,
namely, Theorem \ref{thm.nonCWLSS}.  As was shown in this
theorem, one can determine if $J(G)^{(s)}$ is not componentwise linear
by checking for small values of $s$.  This leads to a much
more general question:

\begin{question}
  Suppose that $I$ is an ideal that is
  componentwise linear.  Does there exists an integer $t \geq 1$
  such that if $I^{(i)}$ is componentwise linear
  for all $1 \leq i \leq t$, then  $I^{(s)}$
  is componentwise linear for all $s \geq t$?
  \end{question}

\noindent
An answer to the above question even in the case that
$I = I(G)$ or $J(G)$ would be of great interest, especially if the value
of $t$ is related to a graph invariant.  Note that the analogous question
  for regular powers has a negative answer.  In particular,
  Conca \cite[Theorem 3.1]{C2006} showed that for any integer $d>1$,
  there exists an ideal $I(d)$ such that $I(d)^k$ has a linear
  resolution (and hence, is componentwise linear) for all $1 \leq k < d$,
  but $I(d)^d$ does not have a linear resolution (and hence, is
  not componentwise linear).

As seen in Theorems \ref{thm.DHNT}, \ref{thm.Selva}, and \ref{thm.GHS},
the operation of whiskering can turn a graph $G$ into a new
graph $H$ such that $J(H)^{(s)}$
is componentwise linear for all $s \geq 1$.  We know of no similar
results for the regular powers.   We thus pose the following
question:

\begin{question}
  Given a graph $G$, can we attach whiskers to $G$ so that the resulting
  graph $H$ has the property that $J(H)^s$ is componentwise linear for
  all $s \geq 1$.  Can we classify all the ways to add whiskers to $G$
  to make $H$ so that $J(H)^s$, respectively $J(H)^{(s)}$, is componentwise linear.
  \end{question}

Moving beyond cover ideals, in \cite{BVT2013, BFHVT2015}, the authors
together with Biermann and Francisco gave a generalization for the
whiskering process that works also for simplicial complexes. We would like to understand if this process would produce more general monomial ideals with the property that all the symbolic powers of the Alexander dual of its Stanley-Reisner ideal are componentwise linear.

For a subset $W$ of the vertices of a simplicial complex $\Delta$, the \emph{restriction} of $\Delta$ on $W$, denoted by $\Delta|_W$, is the simplicial complex whose faces are $\{F \in \Delta ~\big|~ F \subseteq W\}$.

\begin{definition}
	Let $\Delta$ be a simplicial complex on the vertices $V$, let $W \subseteq V$, and let $t \in \NN$.
	\begin{enumerate}
	\item A \emph{partial $t$-coloring} of $\Delta|_W$ is
          given by a partition $W = W_1 \cup \dots \cup W_t$
          (where the set $W_i$ can be empty) such that no facet
          of $\Delta|_W$ contains more than one vertex in each $W_i$.

	\item Let $\chi$ be a partial $t$-coloring of $\Delta|_W$
          given by $W = W_1 \cup \dots \cup W_t$. We define a new
          simplicial complex $\Delta_\chi$ on the vertex set
	  $V \cup \{y_1, \dots, y_t\},$
	  with faces
	  $$\{\sigma \cup \tau ~\big|~ \sigma \in \Delta, \tau \subseteq \{y_1, \dots, y_t\} ~\mbox{and}~ \sigma \cap W_j = \emptyset \text{ if } y_j \in \tau\}.$$
	\end{enumerate}
\end{definition}

\begin{question}
	Let $\Delta$ be a simplicial complex and let $W$ be a subset of the vertices in $\Delta$. For a partial $t$-coloring $\chi$ of $\Delta|_W$, let $I_{\Delta_\chi}$ be the Stanley-Reisner ideal of $\Delta_\chi$ and let $J_{\Delta_\chi}$ be its Alexander dual. What conditions on $W$ and $\chi$ would guarantee that $J_{\Delta_\chi}^{(s)}$ is componentwise linear for all $s \ge 1$?
\end{question}


\end{document}